\newcommand{\mar}[1]{{\marginpar{\sffamily{\scriptsize #1}}}}
\newcommand\CLE[1]{\mar{{\bf CLE}: #1}}
\newcommand\loc{\operatorname{loc}}
\newcommand\Dom{\operatorname{Dom}}
\newcommand\Div{\operatorname{Div}}
\newcommand\dist{\operatorname{d}}
\newcommand\Int{\operatorname{int}}
\newcommand\Tr{\operatorname{tr}}
\newcommand\Spec{\operatorname{spec}}
\newcommand\cI{\mathcal I}
\newcommand\cU{\mathcal U}
\newcommand\tx{\tilde x}
\newcommand\tw{\tilde w}
\newcommand\ty{\tilde y}
\newcommand\ub{\underline{b}}
\newcommand\ubb{\boldsymbol{\underline{b}}}
\newcommand\bb{\boldsymbol b}
\newcommand\tchi{\widetilde\chi}
\newcommand\hV{\widehat{V}}
\newcommand\hL{\widehat{L}}
\newcommand\hc{\widehat{c}}
\newcommand\bx{\boldsymbol x}
\newcommand\by{\boldsymbol y}
\newcommand\bone{\boldsymbol e}
\newcommand\cF{\mathcal F}
\newcommand\tb{\widetilde{b}}
\newcommand\tu{\widetilde{u}}
\newcommand\tQ{\widetilde{Q}}
\newcommand\pa{\partial}
\newcommand\cC{\mathcal{C}}
\newcommand\cL{\mathcal{L}}
\newcommand\cD{\mathcal{D}}
\renewcommand\Re{\operatorname{Re}}
\newcommand\bbN{\mathbb N}
\newcommand\bbR{\mathbb R}
\newcommand\restrictedto{\upharpoonright}
\newcommand\supp{\operatorname{supp}}
\newcommand\CI{{\mathcal C}^{\infty}}
\newcommand\CIc{{\mathcal C}^{\infty}_{\text{c}}}
\DeclareMathOperator{\WF}{WF}
\newtheorem{theorem}{Theorem}[section]
\newtheorem{proposition}{Proposition}[section]
\newtheorem{corollary}{Corollary}[section]
\newtheorem{lemma}{Lemma}[section]
\theoremstyle{definition}
\newtheorem{definition}{Definition}[section]
\theoremstyle{remark}
\newtheorem{remark}{Remark}[section]
\begin{document}

\title{Harnack Inequalities and Heat-kernel Estimates for Degenerate Diffusion
  Operators Arising in Population Biology}

\author{Charles L. Epstein\footnote{Research partially supported by
    NSF grant DMS12-05851, and  ARO grant W911NF-12-1-0552.
\newline
Address: Department of Mathematics, University of
    Pennsylvania; e-mail: cle@math.upenn.edu }\quad
and Rafe Mazzeo\footnote{Research partially supported by
    NSF grant DMS1105050. Address:
Department of Mathematics, Stanford
    University; e-mail: mazzeo@math.stanford.edu\newline
Keywords: degenerate diffusions, Kimura operator, Population Genetics, Harnack
inequality, weighted Poincar\'e inequality, doubling measure, heat kernel
bounds, eigenvalue asymptotics.\newline
MSC-2010: 35K65, 35K08, 35B65, 35P20, 35Q92, 60J60, 92D10, 42B37}}
\date{August 10, 2014}

\maketitle
\noindent

\medskip
\begin{abstract} 
  This paper continues the analysis, started in~\cite{WF1d,EpMaz2}, of a class of
  degenerate elliptic operators defined on manifolds with corners, which arise
  in Population Biology. Using techniques pioneered by J. Moser, and extended
  and refined by L. Saloff-Coste, Grigor'yan, and Sturm, we show that weak
  solutions to the parabolic problem defined by a sub-class of these operators,
  which consists of those that can be defined by Dirichlet forms and have
  non-vanishing transverse vector field, satisfy a Harnack inequality. This
  allows us to conclude that the solutions to these equations belong, for
  positive times, to the natural anisotropic H\"older spaces, and also leads to
  upper and, in some cases, lower bounds for the heat kernels of these
  operators. These results imply that these operators have a compact resolvent
  when acting on $\cC^0$ or $L^2.$ The proof relies upon a scale invariant
  Poincar\'e inequality that we establish for a large class of weighted
  Dirichlet forms, as well as estimates to handle certain mildly singular
  perturbation terms. The weights that we consider are neither Ahlfors regular,
  nor do they generally belong to the Muckenhaupt class $A_2.$
  \end{abstract}

\section{Introduction}
In a series of paper and a book we have considered the analysis of a class of
degenerate diffusion operators, which arise in Population Biology,
see~\cite{Kimura1964}, which we call generalized Kimura diffusion
operators. The typical examples that arise in population
genetics act on functions defined on the $n$-simplex
\begin{equation}
  \Sigma_n=\{(x_1,\dots,x_n):\: 0\leq x_j\text{ and }x_1+\cdots +x_n\leq 1\},
\end{equation}
and take the form
\begin{equation}
  L=\sum_{i,j=1}^n(x_i\delta_{ij}-x_ix_j)\pa_{x_i}\pa_{x_j}+\sum_{i=1}^{n}b_i(x)\pa_{x_i}.
\end{equation}
The vector field is inward pointing, and the coefficient functions $\{b_i(x)\}$
are often either linear or quadratic polynomials.  The class of operators we
analyze includes these examples, but is considerably more general. They are
defined on manifolds with corners by degenerate, elliptic, partial differential
operators. In ``adapted local coordinates'' $(x;y)\in
S_{n,m}=\bbR_+^n\times\bbR^m,$ (where $\bbR_+=[0,\infty)$), such an operator
takes the form:
\begin{multline}\label{eqn0.001}
  Lu=\sum_{j=1}^n[x_i\pa_{x_i}^2+b_i(x;y)\pa_{x_i}]u+\sum_{i,j=1}^n+
x_ix_ja_{ij}(x;y)\pa_{x_i}\pa_{x_j}u+\\
\sum_{i=1}^n\sum_{l=1}^mx_ic_{il}\pa_{x_i}\pa_{y_l}u+
\sum_{k,l=1}^md_{kl}(x;y)\pa_{y_k}\pa_{y_l}u+\sum_{l=1}^md_l(x;y)\pa_{y_l}u,
\end{multline}
in a neighborhood of $(0;0).$ 

In our work thus far we have assumed that the coefficients
are smooth functions of the variables $(\bx;\by),$ or of the ``square root'' variables,
$$(\sqrt{\bx};\by)\overset{d}{=}(\sqrt{x_1},\dots,\sqrt{x_n};y_1,\dots.y_m);$$
later in this paper we  see that somewhat less
regular coefficients arise naturally.  The monograph~\cite{EpMaz2} provides a starting
point for the analysis of generalized Kimura diffusion operators by analyzing
the so-called ``backward Kolmogorov'' operator acting on data belonging to a family of anisotropic
H\"older spaces. Central to this study are the explicit heat kernels associated to the model operators
\begin{equation}\label{eqn2.009}
  L_{\bb,m}=\sum_{j=1}^n[x_i\pa_{x_i}^2+b_i\pa_{x_i}]+\sum_{l=1}^m\pa_{y_l}^2,
\end{equation}
acting on functions defined on $S_{n,m}.$ These kernels are used to construct
parametrices for the heat and resolvent kernels for a generalized Kimura
diffusion operator on a compact manifold with corners. This parametrix
construction is far from sharp, but using it and various functional analytic
arguments connected to anisotropic H\"older spaces, we establish existence,
uniqueness and essentially optimal regularity results in this setting. This
leads to a proof of existence of the Feller semigroup acting on $\cC^0,$ which
is of importance in biological applications, but it is not informative as to
the regularity properties of solutions to the parabolic problem with merely
continuous initial data. 

This parametrix approach does not give optimal regularity results for solutions with initial data in
$\cC^0,$ or regularity results for local solutions, nor does it lead
to pointwise estimates for the heat kernel.  For many applications, such
heat kernel estimates and local regularity results are quite important, which
has motivated our further work on this problem.  One step was taken
in~\cite{EpMaz3}, where we treated the special case where $P$ is a manifold
with boundary.  In that setting we were able to adapt the techniques of
geometric microlocal analysis to give more precise information on the heat
kernel, which then directly implies the various optimal regularity results for
solutions of the heat equation, including the precise regularity for solutions
with initial data in $\cC^0$.

In the present paper we continue this  program in a somewhat different setting
using very different techniques. We use the formalism of Dirichlet forms, weak solutions, 
and Moser's approach to Harnack inequalities, as clarified and extended by Saloff-Coste, 
Grigor'yan, and Sturm, see~\cite{Moser1, Moser2, SaloffCosteLMS} and~\cite{Sturm1, Sturm2, Sturm3},
to prove that local solutions of the parabolic equations associated to certain generalized Kimura
diffusion operators satisfy a Harnack inequality. We also adapt the results from the papers
just cited to explain how this leads to upper and (sometimes) lower pointwise bounds for the heat kernel, and
H\"older regularity at positive times for local, weak solutions of the Cauchy problem. 

The analysis in this paper brings to the fore the mutation rates, which, in the
mathematical formulation, appear as normalized coefficients of a vector field
transverse to the boundary that we call \emph{weights.} These are essentially  the
functions $\{b_i(x;y)\}$ appearing in~\eqref{eqn0.001}, restricted
to the respective subsets of $\pa P,$ given by $\{x_i=0\}.$ In~\cite{EpMaz2} no
hypothesis is made on the weights, other than non-negativity, though it has
been apparent for some time that the structure of the heat kernel is radically
different along the part of the boundary where weights vanish. An early result
along these lines in given in~\cite{shimakura2}.

In the Dirichlet form approach the weights define a measure given locally by
\begin{equation}\label{eqn3.004}
  d\mu_{\bb}(x;y)=e^U(x;y)x_1^{b_1(x;y)-1}\cdots x_n^{b_n(x;y)-1}dx_1\cdots dx_n\cdot
  dy_1\cdots dy_m
\end{equation}
on a neighborhood of $(0,0)$ in $S_{n,m}.$ Here $U$ is a bounded function,
which we take to be zero for the remainder of the introduction. The second
order part of the generalized Kimura diffusion operator, i.e.~the principal
symbol, defines a quadratic form on functions in $\cC^1_c(S_{n,m})$ of the form
\begin{multline}
  q(u,v)(x;y)=\bigg(\sum_{j=1}^n[x_i\pa_{x_i}u\pa_{x_i}v+\sum_{i,j=1}^n+
x_ix_ja_{ij}(x;y)\pa_{x_i}u\pa_{x_j}v+\\
\frac{1}{2}\sum_{i=1}^n\sum_{l=1}^mx_ic_{il}[\pa_{x_i}u\pa_{y_l}v+\pa_{x_i}v\pa_{y_l}u]+
\sum_{k,l=1}^md_{kl}(x;y)\pa_{y_k}u\pa_{y_l}v\bigg)(x;y).
\end{multline}
In the body of the paper, this is abbreviated as
\begin{equation}
   q(u,v)(x;y)=\langle A(x;y)\nabla u(x;y),\nabla v(x;y)\rangle.
\end{equation}
The measure $d\mu_{\bb}$ and the quadratic form $q$ together define a Dirichlet form
\begin{equation}\label{eqn1.009}
  Q(u,v)=\int \langle A(x;y)\nabla u(x;y),\nabla v(x;y)\rangle d\mu_{\bb}(x;y).
\end{equation}
Formally integrating by parts, we arrive at an operator, $L_Q,$ with a densely defined
domain in $L^2(S_{n,m};d\mu_{\bb}),$ specified by a ``natural'' boundary
condition. On sufficiently smooth initial data, it is easy to check that the
solution of the parabolic problem defined by this operator
agrees with the regular solution for ``backward Kolmogorov'' operator analyzed
in~\cite{EpMaz2}. Similar considerations apply to define self adjoint operators
on $L^2(B;d\mu_{\bb})$ for open sets $B\subset S_{n,m}.$ 

It is well known that $Q$ can be modified by the addition of a non-symmetric
term
\begin{equation}
  \int \langle A(x;y)\nabla u(x;y),X(x;y) v(x;y)\rangle d\mu_{\bb}(x;y),
\end{equation}
where $X(x;y)$ is an $\bbR^{n+m}$-valued function. This has the effect of adding a
tangential vector field, $V_X,$ to $L_Q.$ If a weight $b_j(x;y)$ is 
non-constant along a portion of the boundary where $x_j=0,$ then $L_Q$ includes 
a vector field tangent to this boundary hypersurface, with mildly singular coefficients of the form
\begin{equation}
  \sum_{i,j}\alpha_{ij}(x;y)\log x_jx_i\pa_{x_i}+\sum_{j,l}\beta_{j,l}(x;y)\log x_j\pa_{y_l}.
\end{equation}
These terms do not appear if the weights $\{b_i(x;y)\}$ are constant along the appropriate
boundary components. In other words, to obtain an arbitrary generalized Kimura
diffusion operator, as in~\cite{EpMaz2}, using a Dirichlet form we must allow coefficient
functions $X$ with log singularities, i.e., which satisfy
\begin{equation}
  \langle A(x;y)X(x;y),X(x;y) \rangle\leq M\left[\sum_{i=1}^n|\log x_i|+1\right]^2,
\end{equation}
near the boundary of $S_{n,m}.$ 

If $L$ is a generalized Kimura diffusion operator with smooth coefficients,
acting as an unbounded operator on $\cC^0$-functions, then its dual $L^t$ acts
naturally on a subspace of the regular Borel measures. Since $\cC^0$ is
non-reflexive, the obvious dual semi-group is not strongly continuous as $t\to
0^+.$ Following Phillips, one restricts to a subspace on which it is. Measures
belonging to this subspace are absolutely continuous, away from the boundary,
with respect to $d\mu_{\bb}.$ For this, and other reasons (see
Theorem~\ref{cor5.2.22}) it is natural to represent them in the form
$wd\mu_{\bb}.$ If the weights are non-constant, then the differential operator
representing the action of $L^t$ on $w$ has lower order terms with
logarithmically singular coefficients. Such terms are therefore not simply an
artifact of our method, but rather intrinsic to this class of operators. These
types of singular terms can be controlled using several variants of the following lemma:
\begin{lemma}\label{lemB2.2.01010}[ See Lemma~\ref{lemB2.2.010}] Assume that
  $\bb=(b_1,\dots,b_n)$ are positive differentiable functions of $(\bx;\by),$
  with $0<\beta_0<b_j,$ constant outside a compact set. Let $q$ be a measurable
  function on $S_{n,m}$ that satisfies
\begin{equation}\label{eqn220.0121}
    |q(x;y)|\leq M\left[\chi_B(x;y)\sum_{j=1}^l|\log x_i|^k+1\right],
  \end{equation}
  for some $k\in\bbN,$ $B$ a bounded set, and $M > 0.$ Given $\eta>0$ there
  is a $0<\delta<\frac 12,$ so that if $\supp\chi\subset [0,\delta]^n\times
  (-1,1)^m,$ then there is a $C_{\eta}$ so that
\begin{multline}
  \int\limits_{S_{n,m}}\chi^2(x;y)|q(x;y)|u^2(x;y)d\mu_{\bb}\leq
 \eta\int\limits_{S_{n,m}}\langle A\nabla u,\nabla
 u\rangle\chi^2d\mu_{\bb}+\\
C_{\eta} \int\limits_{S_{n,m}}[\langle A\nabla \chi,\nabla\chi\rangle+\chi^2]u^2d\mu_{\bb},
\end{multline}
for any positive differentiable function $u$. 
\end{lemma}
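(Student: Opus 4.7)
The plan is to treat each of the $l$ logarithmic contributions $|\log x_i|^k$ to $|q|$ separately, handling the additive constant ``$+1$'' by absorbing $M\int\chi^2 u^2\,d\mu_\bb$ directly into $C_\eta\int\chi^2 u^2\,d\mu_\bb$. By symmetry I focus on $i=1$ and apply Fubini with $\xi := (x_2,\dots,x_n;y)$ as parameters, reducing the task to a uniform-in-$\xi$ one-dimensional estimate of
\[
J := \int_0^\delta \chi^2 u^2 |\log x_1|^k x_1^{b_1(x_1,\xi)-1}\,dx_1.
\]
I introduce the antiderivative $\Phi_k(x_1;\xi) := \int_0^{x_1} s^{b_1(s,\xi)-1}|\log s|^k\,ds$, so that $\pa_{x_1}\Phi_k = x_1^{b_1-1}|\log x_1|^k$ and $J$ is the integral of $\chi^2 u^2\,\pa_{x_1}\Phi_k$ over $[0,\delta]$.

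The key technical step is the sharp pointwise bound
\[
\Phi_k(x_1;\xi) \le \tfrac{2}{\beta_0}\,x_1^{b_1(x_1,\xi)}\bigl(|\log x_1|^k + 1\bigr), \qquad x_1 \in [0, x_0],
\]
for some $x_0 > 0$ independent of $\xi$. I would prove this by comparing $\Phi_k'$ with the derivative of $F(x_1) := x_1^{b_1(x_1,\xi)}(|\log x_1|^k+1)$. Expanding $F'$ via the chain rule, the dominant contribution as $x_1\to 0$ is $b_1(x_1,\xi)\,x_1^{b_1-1}(|\log x_1|^k+1)\ge\beta_0\,x_1^{b_1-1}|\log x_1|^k = \beta_0\,\pa_{x_1}\Phi_k$, while the error from differentiating the exponent in $x_1^{b_1(x_1,\xi)}$ produces a term of size $|\pa_{x_1}b_1|\cdot x_1|\log x_1|\cdot x_1^{b_1-1}(|\log x_1|^k+1)$, smaller by the vanishing factor $x_1|\log x_1|\to 0$ (using differentiability of $\bb$). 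Hence $F'(x_1)\ge(\beta_0/2)\,\pa_{x_1}\Phi_k$ on some $[0,x_0]$; integrating from $0$, where both $F$ and $\Phi_k$ vanish, gives the stated bound. Squaring and using $x_1(|\log x_1|^k+1)^2 \to 0$ on $[0,\delta]$ then yields, for $\delta \le x_0$,
\[
\Phi_k(x_1;\xi)^2 \cdot x_1^{-b_1(x_1,\xi)} \le C_{k,\beta_0}\,\epsilon(\delta)\,x_1^{b_1-1}, \qquad \epsilon(\delta) \to 0 \text{ as }\delta \to 0^+.
\]

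With this in hand, set $v := \chi u$; by continuity of $\chi$ together with the support hypothesis, $v(\delta,\xi) = 0$, and $\Phi_k(0;\xi) = 0$, so integration by parts gives $J = -2\int_0^\delta v\,\pa_{x_1}v\,\Phi_k\,dx_1$. Weighted Cauchy-Schwarz then yields
\[
J \le \eta\int_0^\delta (\pa_{x_1}v)^2 x_1^{b_1}\,dx_1 + \frac{C_{k,\beta_0}\epsilon(\delta)}{\eta}\int_0^\delta v^2 x_1^{b_1-1}\,dx_1.
\]
Unpacking $(\pa_{x_1}v)^2 \le 2\chi^2(\pa_{x_1}u)^2 + 2(\pa_{x_1}\chi)^2 u^2$, integrating over $\xi$ against $\prod_{j\ge 2}x_j^{b_j-1}e^U\,dx_j\,dy$, and combining coercivity of the principal symbol (namely $\langle A\nabla w,\nabla w\rangle \gtrsim \sum_i x_i(\pa_{x_i}w)^2$) with Cauchy-Schwarz for the quadratic form applied to $\nabla(\chi u)=u\nabla\chi + \chi\nabla u$ (giving $\langle A\nabla(\chi u),\nabla(\chi u)\rangle \le 2\chi^2\langle A\nabla u,\nabla u\rangle + 2u^2\langle A\nabla\chi,\nabla\chi\rangle$) recasts the right side as a constant multiple of $\eta\bigl(\int\chi^2\langle A\nabla u,\nabla u\rangle\,d\mu_\bb + \int \langle A\nabla\chi,\nabla\chi\rangle u^2\,d\mu_\bb\bigr) + C_\eta\int\chi^2 u^2\,d\mu_\bb$. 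Summing over $i=1,\dots,l$ and readjusting $\eta$ and $C_\eta$ completes the proof. The main obstacle is the sharp exponent $x_1^{b_1(x_1,\xi)}$ in the bound on $\Phi_k$; without this sharpness (for instance, settling for the weaker $x_1^{\beta_0}$) the quantity $\Phi_k^2 x_1^{-b_1}$ fails to be $o(1)\cdot x_1^{b_1-1}$ and the Cauchy-Schwarz step collapses, which is precisely where the differentiability hypothesis on $\bb$ is essential.
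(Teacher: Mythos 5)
Your one-dimensional mechanism is sound and in fact close in spirit to the paper's: the paper also gains its crucial decay factor (there $x_i^{1-(a+b_i-\ub_i)}\le x_i^{1/2}$) by integrating by parts against the weight and then using $\sum_i x_i(\pa_{x_i}f)^2\le M_1\langle A\nabla f,\nabla f\rangle$ plus Cauchy--Schwarz; your antiderivative $\Phi_k$ with the comparison $F'\ge\tfrac{\beta_0}{2}\Phi_k'$ (which does require a uniform bound on $\pa_{x_1}b_1$, i.e.\ $\bb\in\cC^1$ with bounded gradient, as the paper also implicitly uses) is a clean way to avoid the paper's preliminary domination of $|\log x_i|^k$ by $\eta'\sum_i x_i^{-a}$.

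However, there is a genuine gap at the Fubini/integration-by-parts step. The measure does not factor: for $j\ge 2$ the weights $x_j^{b_j(x;y)-1}$ (and any $e^{U}$) depend on $x_1$, so you cannot integrate ``over $\xi$ against $\prod_{j\ge2}x_j^{b_j-1}\,dx_j\,dy$'' as if it were an $x_1$-independent reference measure. When you integrate by parts in $x_1$ against $\pa_{x_1}\Phi_k=x_1^{b_1-1}|\log x_1|^k$, the derivative also falls on $\prod_{j\ge2}x_j^{b_j(x;y)-1}$, producing extra terms of size
\begin{equation*}
\chi^2u^2\,\Phi_k\sum_{j\ne1}|\pa_{x_1}b_j|\,|\log x_j|\;\lesssim\;\chi^2u^2\,x_1\bigl(|\log x_1|^k+1\bigr)\sum_{j\ne1}|\log x_j|
\end{equation*}
relative to $d\mu_{\bb}$, which are again logarithmically singular in the \emph{other} variables and are nowhere estimated in your write-up. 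This is exactly the difficulty the paper's proof is built to handle: it first freezes the weights, replacing $b_i$ by $\ub_i=\min_{x_i}b_i$ (independent of the integration variable, so no such cross terms arise from that factor), and reserves half of the quantity $\sum_i x_i^{-a}$ to absorb the resulting $\sum_{k\ne i}\pa_{x_i}\ub_k\log x_k$ contributions. Your version is likely repairable, since the offending terms carry the small factor $x_1(|\log x_1|^k+1)\le\epsilon(\delta)$ and $|\log x_j|\le|\log x_j|^k+1$, so treating all indices $i=1,\dots,n$ simultaneously and absorbing these contributions into the left-hand side for $\delta$ small should close the loop; but as written the step is missing, and it is precisely the step where the hypotheses on $\bb$ (beyond what the one-variable estimate uses) do real work.
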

\begin{remark} This allows us to control the singular terms in a neighborhood
  of any boundary point. Since the weights $\bb$ are constant outside of a
  compact set, we can use this lemma along with a simple covering argument to
  show that these singular terms are bounded by a small multiple of $Q(u,u),$
  plus a large multiple of the $L^2$-norm of $u.$
\end{remark}

Assuming that the weights are bounded below by a positive constant, and have a
particular logarithmic modulus of continuity, we are able to show that
$d\mu_{\bb}$ is a doubling measure and the Dirichlet form satisfies a scale
invariant $L^2$-Poincar\'e inequality. The Sobolev inequality and then the
Harnack inequality follow from an argument of Saloff-Coste, which K. Sturm
adapted to the metric-measure category.  These are local estimates that can
then be applied to solutions defined on a compact manifold with corners.
As an important consequence we then show that the regular solution to the
Cauchy problem with initial data in $\cC^0$ is H\"older continuous for positive
times. 

A generalized Kimura diffusion operator, $L$ on a compact manifold with
corners, $P,$ defines a measure $d\mu_{L},$ given locally by
$e^{U(x;y)}d\mu_{\bb},$ as in~\eqref{eqn3.004}. This is a finite measure if the
weights are strictly positive. The following basic regularity result is a
consequence of our local estimates:
\begin{theorem}\label{thm5.1.0011}[Theorem~\ref{thm5.1.001}]
  Let $P$ be a compact manifold with corners and $L$ a generalized Kimura diffusion
  operator with smooth coefficients defined on $P.$ Suppose that the weights
  defined by $L$ are positive  along every boundary
  component. If $u$ is a weak solution to the initial value problem
  \begin{equation}
    (\pa_t-L)u=0\text{ with }u(\xi,0)=f(\xi)\in L^2(P;d\mu_{L}),
  \end{equation}
then $u\in\CI(P\times (0,\infty)).$
\end{theorem}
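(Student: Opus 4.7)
The plan is to combine the local parabolic Harnack inequality and associated H\"older regularity at positive times (the principal technical content of this paper) with the anisotropic H\"older space theory of~\cite{EpMaz2}. Having local H\"older continuity gives an honest continuous function at any positive time, which is then bootstrapped to full smoothness using the existing regular-solution theory.

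First I would cover the compact manifold with corners $P$ by finitely many coordinate patches in which the operator takes the form~\eqref{eqn0.001}, with each patch adapted to some boundary stratum of $P$. Since the weights are bounded below by a positive constant along every boundary component and are smooth (hence have the required logarithmic modulus of continuity), the hypotheses of the local regularity theory developed earlier apply in each chart: the measure $d\mu_{\bb}$ is doubling, the Dirichlet form $Q$ satisfies a scale-invariant $L^2$-Poincar\'e inequality, and the logarithmically singular lower-order terms produced when the $b_i$ are non-constant along a face are controlled by Lemma~\ref{lemB2.2.010}. The local Harnack inequality then applies, as does the ensuing H\"older continuity at positive times for weak solutions. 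A finite cover argument upgrades these local statements to a uniform conclusion on $P$, so $u(\cdot, t_0) \in \cC^0(P)$ for every $t_0 > 0$.

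Next, for any $t_0 > 0$, define $v(\xi, t) := u(\xi, t_0 + t)$. Then $v$ is a weak solution of $(\pa_t - L)v = 0$ with continuous initial data. By the uniqueness of the regular solution with $\cC^0$ data established in~\cite{EpMaz2}, $v$ coincides with the image of $u(\cdot, t_0)$ under the Feller semigroup generated by $L$. The parametrix construction of~\cite{EpMaz2} places $v(\cdot, t)$ in the anisotropic H\"older spaces for every $t > 0$. Because $L$ has smooth coefficients, this regularity propagates to all orders: in the interior of $P$, where $L$ is uniformly elliptic, standard parabolic interior regularity yields $C^\infty$; near boundary faces and corners, iteratively applying the anisotropic H\"older estimates to successively differentiated equations (combined with commutator arguments for derivatives tangent to the strata) gives smoothness up to and including every boundary stratum. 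Letting $t_0 \to 0^+$ then yields $u \in \CI(P \times (0, \infty))$.

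The principal obstacle is the first step: verifying that the local estimates from the body of the paper, which are set in the model space $S_{n,m}$, glue together with uniform constants on $P$ across intersecting boundary faces of differing codimensions. Once the doubling property, the Poincar\'e inequality, and the Lemma~\ref{lemB2.2.010} absorption estimate hold uniformly on an adapted coordinate cover, the Harnack inequality and H\"older estimate are purely local statements and the subsequent bootstrap from $\cC^0$-continuity to $\CI$ reduces to invoking~\cite{EpMaz2}, using uniqueness of the regular solution to identify $v$ with the parametrix-constructed solution.
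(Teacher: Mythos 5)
There is a genuine gap in your step where you pass from $u(\cdot, t_0)\in \cC^0(P)$ to anisotropic H\"older regularity by invoking the parametrix construction of~\cite{EpMaz2}.  As stated in the introduction to this very paper, the parametrix construction of~\cite{EpMaz2} does \emph{not} give optimal regularity (in particular, does not give $\cC^{0,\gamma}_{\WF}$ regularity at positive times) for solutions whose data is merely in $\cC^0$; that limitation is one of the principal motivations for the present work.  So you cannot improve from $\cC^0$ to $\cC^{0,\gamma}_{\WF}$ by re-running the Feller semigroup and appealing to~\cite{EpMaz2}; you have to get the H\"older gain from the Harnack/De Giorgi--Nash--Moser machinery developed here.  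The good news is that Corollary~\ref{cor5.1.003} already delivers this directly: in each adapted chart, the weak $L^2$ solution satisfies the estimate \eqref{eqn165.008}, which is precisely a $\cC^{0,\gamma}_{\WF}$ bound, and a finite cover gives $u(\cdot,t)\in \cC^{0,\gamma}_{\WF}(P)$ for $t>0$ without any detour through $\cC^0$.  This also removes the need for your intermediate identification of the weak $L^2$ solution with the $\cC^0$-regular (Feller) solution, which, while true, requires an $L^2$-energy uniqueness argument of the kind the paper only carries out later (in the heat kernel section) and is not a freebie.

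Your final bootstrap step is also too compressed to stand on its own.  Differentiating a degenerate Kimura operator across boundary strata produces commutator terms with new singular coefficients, and controlling them is the content of a substantial part of~\cite{EpMaz2}, not a routine iteration.  The paper's route is simply to cite Corollary 11.2.2 of~\cite{EpMaz2}, which encapsulates exactly this bootstrap: once $u(\cdot,t)\in\cC^{0,\gamma}_{\WF}(P)$ for positive times, the solution is $\CI$ on $P\times(0,\infty)$.  Replacing your improvised commutator argument with that citation, and applying Corollary~\ref{cor5.1.003} directly in place of steps 3--4, turns your outline into the proof actually given in the paper.
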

\noindent
Among other things we also show that $(\mu-L)^{-1}$ acting on $\cC^0(P)$ is a
compact operator.  In addition we establish upper bounds for the ``heat
kernel,'' i.e. the Schwartz kernel of $e^{tL}.$  Our earlier work indicates that
this heat kernel is smooth along the boundary in the outgoing variables, but
somewhat singular along the boundary in the incoming variables.

  Let $\rho_i(\xi,\eta)$ denote the distance between $\xi, \eta\in P,$ with respect to the incomplete metric defined by dualizing the principal symbol of $P.$
\begin{theorem}\label{cor5.2.22}[Theorem\ref{cor5.2.2}] Assume that $P$ is a compact manifold with
  corners and $L$ is a generalized Kimura diffusion defined on $P$ with
  positive weights. If we represent the kernel of the operator $e^{tL}$ as
  $p_t(\xi,\eta)d\mu_{L}(\eta),$ then  there  are positive constants $C_0,C_1,C_2$
  so that, for all $t>0$ and pairs $\xi,\eta\in P$ we have
\begin{equation}\label{eqn207.00551}
 p_t(\xi,\eta)\leq \frac{C_0\exp\left(-\frac{\rho_i^2(\xi,\eta)}{C_2t}\right)}
{\sqrt{\mu_{L}(B^i_{\sqrt{t}}(\xi))\mu_{L}(B^i_{\sqrt{t}}(\eta))}}
\times\left(1+\frac{\rho_i(\xi,\eta)}{\sqrt{t}}\right)^{D}
\cdot\exp(C_1t).
\end{equation}
For each $\eta\in P,$ the function $(\xi,t)\mapsto p_t(\xi,\eta)$ belongs
to $\CI(P\times (0,\infty)).$
\end{theorem}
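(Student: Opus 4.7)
The plan is to derive Theorem~\ref{cor5.2.22} as a consequence of the parabolic Harnack inequality established earlier in the paper, following the standard Grigor'yan--Saloff-Coste--Sturm route from doubling plus scale-invariant Poincar\'e to Gaussian heat kernel bounds. The first step is to isolate the self-adjoint part of $L.$ After conjugating out $e^U$ the leading symbol plus the symmetric part of the first order terms is generated by the symmetric Dirichlet form $Q$ of~\eqref{eqn1.009} on $L^2(P;d\mu_L),$ while the remaining first order terms appear as a non-symmetric perturbation $V_X$ with (at most) logarithmically singular coefficients of the type covered by Lemma~\ref{lemB2.2.01010}. Using that lemma together with the covering remark following it, the perturbation is bounded by $\eta\, Q(u,u)+C_\eta\|u\|_{L^2}^2,$ so $L=L_Q+V_X$ differs from the symmetric generator only by a form-small, relatively bounded term. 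This produces the factor $\exp(C_1t)$ in~\eqref{eqn207.00551}: the perturbed semigroup $e^{tL}$ is comparable, in the sense of quadratic forms, to $e^{tL_Q}$ up to a multiplicative exponential in $t.$

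Next I would invoke the scale-invariant $L^2$-Poincar\'e inequality and the doubling property of $d\mu_L$ with respect to the intrinsic distance $\rho_i,$ both established earlier. By Sturm's theorem for strongly local regular Dirichlet forms, these two ingredients are equivalent to the parabolic Harnack inequality for weak solutions of $(\partial_t-L_Q)u=0$ on $\rho_i$-balls $B^i_r(\xi)\times(t_0,t_0+r^2).$ From the Harnack inequality one extracts, in the now-standard way, the on-diagonal bound $p^Q_t(\xi,\xi)\le C/\mu_L(B^i_{\sqrt{t}}(\xi)),$ and then Davies' exponential perturbation method, combined with the semigroup/Cauchy-Schwarz trick $p^Q_{2t}(\xi,\eta)\le p^Q_t(\xi,\xi)^{1/2}p^Q_t(\eta,\eta)^{1/2}$ applied to the twisted semigroup $e^{\pm\psi}e^{tL_Q}e^{\mp\psi},$ yields the Gaussian off-diagonal factor $\exp(-\rho_i^2(\xi,\eta)/(C_2t)).$ The polynomial correction $(1+\rho_i/\sqrt{t})^D$ appears when one converts the mean-value Gaussian upper bound (which controls $L^\infty$-norms on parabolic balls) into a pointwise estimate: it records how the doubling exponent $D$ of $\mu_L$ governs the ratio between the balls $B^i_{\sqrt{t}}$ and the balls of the same radius centered at the Gaussian-weighted mass. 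Combining this with the comparison $e^{tL}\simeq e^{C_1t}e^{tL_Q}$ obtained in the previous step yields the stated bound.

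Finally, for the smoothness assertion I would argue as follows. Fix $\eta\in P.$ For any $t_0>0$ and any $\xi_0\in P$ the function $u(\xi,t):=p_t(\xi,\eta)$ is a weak solution of $(\partial_t-L)u=0$ on $P\times(t_0/2,\infty),$ because $e^{tL}$ is a semigroup and $u(\cdot,t_0/2)\in L^2(P;d\mu_L)$ by the on-diagonal bound. Theorem~\ref{thm5.1.0011}, already proved in the paper, asserts that every weak solution of this Cauchy problem with $L^2$ initial data lies in $\CI(P\times(0,\infty));$ applied to the shifted initial time $t_0/2,$ this gives $u\in\CI(P\times(t_0,\infty)),$ and since $t_0$ was arbitrary, $u\in\CI(P\times(0,\infty)).$

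The main obstacle is the control of the non-symmetric log-singular drift $V_X,$ since the standard Sturm-type machinery is formulated for symmetric forms and for bounded lower-order perturbations. The key point is that Lemma~\ref{lemB2.2.01010} furnishes a \emph{form-small} bound for $V_X$ relative to $Q,$ which is precisely the condition needed both to preserve the Harnack inequality (with an extra exponential-in-time factor) under perturbation and to run Davies' twisted-semigroup argument while keeping the Gaussian decay intact; absorbing the resulting constants into $C_1,C_2,D$ is what accounts for the shape of~\eqref{eqn207.00551}.
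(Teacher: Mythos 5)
Your overall architecture (doubling plus scale-invariant Poincar\'e, a Moser-type mean value bound, Davies' exponential twist, then Theorem~\ref{thm5.1.001} for smoothness) is the right family of ideas, and your smoothness argument is essentially the paper's. But there is a genuine gap at the pivot of your proof: the claim that form-smallness of $V_X$ (Lemma~\ref{lemB2.2.01010}) makes $e^{tL}$ ``comparable to $e^{tL_Q}$ up to $e^{C_1t}$'' and that you may therefore prove the Gaussian bound for the symmetric kernel $p^Q_t$ and transfer it to $p_t$. Relative form-boundedness only yields an $L^2\to L^2$ operator bound of the type $\|e^{tL}\|_{2\to 2}\leq e^{C_1t}$; it gives no pointwise domination of the Schwartz kernel of the perturbed, non-symmetric semigroup by the symmetric one, and no such kernel comparison is available here (a drift perturbation, even a bounded one, changes the kernel in a way that cannot be absorbed into a multiplicative constant). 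So the step ``combine the Gaussian bound for $p^Q_t$ with $e^{tL}\simeq e^{C_1t}e^{tL_Q}$'' does not go through, and with it the off-diagonal factor and the on-diagonal normalization for $p_t$ are left unproved.

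What is actually needed, and what the paper does, is to run the estimates directly on the non-symmetric equation. The $L^1\to L^\infty$ mean value estimate \eqref{eqn245.009} of Lemma~\ref{lemB1.010} is proved for weak subsolutions of $\pa_tu=(L_Q-V_X-c)u$ itself, with the log-singular drift and potential absorbed inside the Moser iteration via Lemma~\ref{lemB2.2.0101}; it is then applied to $p_t$ with $p=1$ in \emph{both} sets of variables, using \eqref{eqn158.012} and the fact that the adjoint operator is again of the same class because $\hc$ in \eqref{eqn122.0010} also satisfies the logarithmic estimate. The Gaussian factor comes from Davies' twist applied to the non-symmetric semigroup, where the norm bound \eqref{eqn147.012}, $\|T_t^{\alpha,\phi}\|_{2\to2}\leq e^{((1+\epsilon)\alpha^2+C_\epsilon)t}$, is again obtained by absorbing $|X|_A^2$ and $|c|$ through Lemma~\ref{lemB2.2.0101}; this is where $\exp(C_1t)$ and the constant $C_2>4$ really originate. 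Note also that the full parabolic Harnack inequality you invoke is not needed for the upper bound: only the subsolution mean value estimate enters, and invoking Sturm's equivalence for the symmetric form $Q$ does not help with the non-symmetric $L$ unless you redo the subsolution estimates for $L$ as above.
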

\noindent
In particular, $p_t(\xi,\eta)$ is bounded for positive times, which shows that
the leading singularity of the heat kernel on the incoming face is captured by
the measure $d\mu_{L}.$ In~\cite{shimakura1} Shimakura gives a similar estimate
for the heat kernel of the standard Kimura diffusion operator on the simplex in
$\bbR^d,$ under the assumption that the weights are constant and at least
$1/2.$ In~\cite{ChenStroock} Chen and Stroock prove an analogous result in the
1-dimensional case, with vanishing weights.

In a separate paper we  treat a special subclass of ``diagonal operators,'' 
which act on functions defined on $S_{n,m}$ and take the special form
\begin{equation}
  Lu=\sum_{j=1}^n[x_i\pa_{x_i}^2+b_i(x;y)\pa_{x_i}]u+\sum_{l=1}^m[\pa_{y_l}^2+d_l(x;y)\pa_{y_l}]u.
\end{equation}
We analyze this special case using the kernel methods introduced in~\cite{WF1d}
and~\cite{EpMaz2}. Assuming that the weights are bounded below, and 
that the coefficients, $\{b_i,d_l\},$ are constant outside a compact set, we
establish the H\"older regularity of solutions to $(\pa_t-L)u=0$ with initial
data in $\cC^0_c(S_{n,m}).$ 

\smallskip \centerline{\bf Acknowledgements} We would like to thank Daniel
Stroock for suggesting that we look at the works of Saloff-Coste, Grigor'yan
and Sturm. We would like to thank Camelia Pop for many helpful discussions and
her many independent contributions to this effort; we would also like to thank
Phil Gressman for many useful conversations over the course of this research.

\section{A Preliminary Result}\label{sec2}

The weights, which are the coefficients of the transverse components of the
vector field along $\pa P,$ play a central role in this paper. We first show
that they are invariantly defined by the operator itself.  Let $P$ be a
manifold with corners, and $\xi\in \pa P$ a boundary point of codimension $n.$ If
$L$ is a generalized Kimura diffusion operator, then Proposition 2.2.3
in~\cite{EpMaz2} shows that there are adapted local coordinates
$(x_1,\dots,x_n;y_1,\dots,y_m)$ lying in $S_{n,m}$ with $p\leftrightarrow
(0;0),$ in which $L$ takes the form
\begin{multline}\label{eqn1.001}
  Lu=\sum_{j=1}^n[x_i\pa_{x_i}^2+b_i(x;y)\pa_{x_i}]u+\sum_{i,j=1}^n+
x_ix_ja_{ij}(x;y)\pa_{x_i}\pa_{x_j}u+\\
\sum_{i=1}^n\sum_{l=1}^mx_ic_{il}\pa_{x_i}\pa_{y_l}u+
\sum_{k,l=1}^md_{kl}(x;y)\pa_{y_k}\pa_{y_l}u+\sum_{l=1}^md_l(x;y)\pa_{y_l}u.
\end{multline}
The operator is assumed to be elliptic where $\{x_i>0:\: i=1,\dots,n\},$ and
coefficients of the transverse vector field $\{b_i(x;y):\:i=1,\dots,n\}$ are
non-negative along the boundary, i.e. $b_i(x;y)\geq 0,$ where $x_i=0.$ A given
point can belong to a variety of such coordinate charts, nonetheless, as shown
below, these coefficients are invariantly defined.

Label the hypersurface boundary components of $P$ by indices $\cI:$
\begin{equation}
  \pa P=\bigcup_{i\in\cI}H_i.
\end{equation}
To demonstrate this invariance, recall that the principal symbol of the
operator $L$ in the interior of $P$ is a positive definite quadratic form
on the fibers of $T^*P;$ by duality, it defines an incomplete metric on $P.$ Let
$r_i(\eta)$ denote the minimal distance from a point $\eta\in P$ to the boundary
hypersurface with index $i.$ Each $r_i$ is smooth in a neighborhood of
$\overline{H_i}\subset P.$ Suppose that the point $\xi\in \pa P$ is of
codimension $n$ and $(x_1,\dots,x_n;y_1,\dots,y_m)$ are adapted local
coordinates centered at $\xi.$ There are distinct indices $\{i_1,\dots,i_n\}$ so
that
\begin{equation}
  \xi\in \bigcap_{j=1}^nH_{i_j}.
\end{equation}
Moreover, upon relabeling we recall from the construction of adapted local
coordinates that 
\begin{equation}\label{eqn18.013}
  2\sqrt{x_i}=r_{j_i}(x;y).
\end{equation}
In these coordinates, the operator takes the form~\eqref{eqn1.001} from which it 
is clear that, for each $i:$
\begin{equation}
  b_i(x;y)=Lx_i\restrictedto_{x_i=0}=\frac{1}{4}Lr^2_{j_i}\restrictedto_{r_{j_i}=0}.
\end{equation}
The last expression is globally defined along $H_i,$  completing the proof
of the following proposition:
\begin{proposition}\label{prop1.001}
  Let $P$ be a manifold with corners and $L$ a generalized Kimura diffusion
  operator defined on $P.$ The coefficients of the transverse vector field
  along the boundary of $P,$ in any adapted coordinate system, are restrictions
  of functions defined globally on the hypersurfaces.
\end{proposition}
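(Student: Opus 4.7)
The plan is to exhibit $b_i(x;y)|_{x_i=0}$ as the restriction to $H_{j_i}$ of a function defined intrinsically from $L$ and the principal symbol, so that it manifestly does not depend on the chosen adapted chart.

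First I would fix a hypersurface boundary component $H_i \subset \partial P$ and recall that the principal symbol of $L$, being positive definite in the interior, dualizes to a Riemannian metric on $\ins{P}$ that is incomplete and degenerates precisely at $\partial P$. Let $r_i$ be the intrinsic distance from a point to $H_i$ measured in this metric; the known local structure of the degeneration (a Kimura/Fuchsian singularity along $H_i$) ensures that $r_i$ is smooth in a one-sided neighborhood of $\overline{H_i}$ in $P$. The key observation is then that in any adapted chart $(x_1,\dots,x_n;y_1,\dots,y_m)$ centered at a codimension-$n$ point $\xi \in H_{i_1}\cap\cdots\cap H_{i_n}$, the defining functions are related to the intrinsic distances by the normalization $2\sqrt{x_i} = r_{j_i}$, as in~\eqref{eqn18.013}. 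This is the one ingredient I would take for granted from the theory of adapted coordinates in~\cite{EpMaz2}.

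Next I would compute $Lx_i$ directly from the local form~\eqref{eqn1.001}. Every term other than $x_i\pa_{x_i}^2 x_i + b_i\pa_{x_i} x_i$ is divisible by some $x_j$ and by inspection either vanishes when applied to $x_i$ or vanishes after restricting to $\{x_i=0\}$, so
\begin{equation*}
  Lx_i \big|_{x_i=0} = b_i(x;y)\big|_{x_i=0}.
\end{equation*}
Since $x_i = \tfrac14 r_{j_i}^2$, the same quantity is computable as $\tfrac14 L r_{j_i}^2\big|_{r_{j_i}=0}$. This formula is intrinsic: $L$ is a globally defined operator on $P$, and $r_{j_i}^2$ is a globally defined smooth function in a neighborhood of $\overline{H_{j_i}}$. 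Therefore $\tfrac14 L r_{j_i}^2|_{r_{j_i}=0}$ defines a smooth function on $H_{j_i}$, and in any adapted chart containing a point of $H_{j_i}$ this intrinsic function restricts to the coefficient $b_i(x;y)|_{x_i=0}$ of the corresponding transverse term.

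The only real subtlety I anticipate is justifying that $r_{j_i}$ is smooth (not merely Lipschitz) up to $H_{j_i}$ and hence that $L r_{j_i}^2$ makes sense and extends smoothly to the boundary; this ultimately reduces to the explicit local normal form of the degenerate metric $r_{j_i}\,dr_{j_i}^2 + (\text{tangential})$ near $H_{j_i}$, where $r_{j_i}^2$ is a bona fide smooth boundary defining function even though $r_{j_i}$ itself has a square-root behavior. Once this is in hand, the chain of equalities above yields the invariance, and the proposition follows.
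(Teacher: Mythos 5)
Your proposal is correct and follows essentially the same route as the paper: both use the adapted-coordinate normalization $2\sqrt{x_i}=r_{j_i}$ from \eqref{eqn18.013}, compute $Lx_i\restrictedto_{x_i=0}=b_i\restrictedto_{x_i=0}$ directly from the local form \eqref{eqn1.001}, and identify this with the globally defined quantity $\tfrac14 Lr_{j_i}^2\restrictedto_{r_{j_i}=0}$ along the hypersurface. The smoothness of $r_{j_i}$ near $\overline{H_{j_i}}$ that you flag is likewise taken from the construction of adapted coordinates in~\cite{EpMaz2}, so no gap remains.
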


These functions are of central importance in what follows, so we make the following definition:
\begin{definition} The normalized coefficients of the transverse vector fields along $\pa P$
  defined by a generalized Kimura diffusion operator,
  $\{\frac{1}{4}Lr^2_i\restrictedto_{r_{i}=0}:\:i\in\cI\}$  are called the
  \emph{weights} of the Kimura operator.
\end{definition} 
\noindent
Strictly speaking, the weights are invariantly defined only along $\pa P,$ but we sometimes 
use the term to refer to the functions $\{b_i(x;y)\}$ defined in a 
neighborhood of a subset of $\pa P,$ and which agree with the weights on $\pa P.$ As
described earlier, these weights define a class of measures on $P$, the elements of which 
differ by a bounded, non-vanishing factor of the form $e^{U(\xi)}.$ 

Let $dV_P$ be a smooth non-degenerate density on $P.$ For
each $i\in\cI,$ we let $B_i$ be a smooth extension of the weight $b_i$ from
$H_i$ to all of $P.$ For simplicity, assume that $B_i$ is independent of
$x_i$ in a small neighborhood of $H_i$ and reduces to a positive constant
outside of a slightly larger neighborhood; similarly, let $R_i$ denote a smooth
extension to $P$ of $r_i,$ the distance to $H_i,$ which we again assume is a
positive constant outside of a small neighborhood of $H_i.$ Set
\begin{equation}
  W(\xi)=\prod_{i\in\cI} R_i(\xi)^{2B_i(\xi)-1},
\end{equation}
and define the measure $d\mu_L$ by
\begin{equation}
  d\mu_L(\xi)=W(\xi)dV_P(\xi).
\end{equation}
It follows from~\eqref{eqn18.013} that in any adapted coordinate system, $(\bx;\by),$ there is a 
bounded, continuous function $U(\bx;\by)$ so that $d\mu_{L}(\bx;\by)=e^{U(\bx;\by)}d\mu_{\bb}(\bx;\by).$ 
The expansion of $U$ along $H_i$ typically takes the form $\gamma(\bx;\by)x_i\log x_i,$ so it is not in 
general smooth. We speak of a ``measure defined by the weights of $L$'' as any measure with this property. 

\section{Metric-measure estimates}\label{sec3}
We now turn to the analysis of the class of generalized Kimura diffusion
operators that can be locally defined using a symmetric Dirichlet form, as
in~\eqref{eqn1.009}.  Elements of this class of Kimura diffusion operators are 
both more general than the ones considered before, as certain coefficients 
of the first order terms are allowed to be singular, but also less general 
in that not every Kimura diffusion operator has such a description, even locally.  
This approach to proving estimates is an outgrowth of the pioneering work of John Nash and J\"urgen
Moser on estimates for elliptic and parabolic equations with bounded measurable
coefficients. More recently these ideas have been recast by Fabes and Stroock
on the one hand, and Davies, Saloff-Coste, and Grigor'yan on the other, as a
way to obtain Harnack inequalities, H\"older estimates on solutions and
kernel bounds for the Green and heat kernels defined by uniformly elliptic operators.  

Briefly, this approach uses Moser's iteration to obtain bounds on solutions to
elliptic and parabolic equations via the Sobolev inequality and properties of
doubling measures. For uniformly elliptic operators on manifolds, Saloff-Coste
and Grigor'yan isolated the two essential ingredients: that the measure have
the doubling property and that there is a scale-invariant $L^2$-Poincar\'e
inequality.  This was later generalized by Sturm, see~\cite{Sturm1,Sturm2,
  Sturm3}, to the setting of metric measure spaces, with operators defined
through strongly local Dirichlet forms. Sturm's work provided a strong impetus
to adopt this general approach, but in the end, we often found it easier to adapt the
proofs given in~\cite{SaloffCosteLMS}, rather than to use Sturm's results
directly.

The underlying space is $S_{n,m},$ which is a manifold with corners, and should be understood 
as an adapted coordinate chart for $P$. We endow this chart with the measure 
\begin{equation}\label{eqn66}
d\mu_{\bb}(x;y)=\prod_{j=1}^nx_i^{b_i(x;y)-1}dxdy, 
\end{equation}
where 
$$\bb=(b_1(x;y).\dots,b_n(x;y))$$ 
is a vector of positive continuous functions, which are constant outside a bounded neighborhood of $(0;0).$ 
Fix a constant $\beta_0>0$ such that 
\begin{equation}
  \beta_0\leq b_i(x;y)\text{ for all }(x;y)\in S_{n,m}.
\end{equation}
For many applications it is reasonable, even necessary, to assume that these functions are
$\cC^1$ in the variables $(\bx;\by),$ or else in the ``square-root'' variables $(\sqrt{\bx};\by)$;
however, many of the basic results below require far less regularity.

If $B \subset S_{n,m}$ is relatively open and $u, v \in \CIc(B)$, then we consider the Dirichlet form
\begin{multline}\label{eqn68}
  Q_B(u,v)=\int\limits_{B}\Bigg[\sum_{j=1}^nx_j\pa_{x_j}u\pa_{x_j}v+\sum_{i,j}\sqrt{x_ix_j}
a_{ij}(x;y)\pa_{x_i}u\pa_{x_j}v+\\
\frac{1}{2}\sum_{j,l}\sqrt{x_j}c_{jl}(x;y)[\pa_{x_j}u\pa_{y_l}v+\pa_{x_j}v\pa_{y_l}u]+
\sum_{l,m}d_{lm}(x;y)\pa_{y_l}u\pa_{y_m}v \Bigg]d\mu_{\bb}(x;y),
\end{multline}
Note that $u, v$ are \emph{not} required to vanish along $\pa S_{n,m}\cap \overline{B}.$ 

We define the associated $L^2$-inner product by setting:
\begin{equation}
(u,v)_{\bb,B}=\int\limits_{B}uv \, d\mu_{\bb}(x;y).
\end{equation}
The subscript $B$ is omitted if the intended subset is clear from the context. 
Formally integrating by parts, assuming for example that $v$ vanishes near $\pa S_{n,m}$, gives
\begin{equation}
Q_B(u,v)=-(L_Qu,v)_{\bb,B},
\end{equation}
where
\begin{multline}
L_Qu=\sum_{j=1}^n[x_j\pa^2_{x_j}+b_j(x;y)\pa_{x_j}]u+\sum_{i,j}\pa_{x_j}\sqrt{x_ix_j}
a_{ij}(x;y)\pa_{x_i}u+\\
\frac{1}{2}\sum_{j,l}[\pa_{y_l}\sqrt{x_j}c_{jl}(x;y)\pa_{x_j}u+\pa_{x_j}\sqrt{x_j}c_{jl}(x;y)\pa_{y_l}u]+
\sum_{l,m}d_{lm}(x;y)\pa_{y_m}\pa_{y_l}u+Vu.
\end{multline}
The vector field $V$ is tangent to $\pa S_{n,m}$, but note that if $\bb(x;y)$ is non-constant near $\pa
S_{n,m}$, then $V$ may have singular coefficients and involve terms of the form
\begin{equation}\label{eqn19.006}
 \log x_j(x_i \pa_{x_i}),\quad \log x_j\, \pa_{y_l}.
\end{equation}
All of this works equally well on more general manifolds with corners. This
setup is related to the ideas used by Shimakura in~\cite{shimakura1} to study
certain simple Kimura-type operators defined on simplices.  In Shimakura's work
the weights are assumed to be constant.

If $L$ is a generalized Kimura diffusion operator on $P,$ then $L$ determines a
class of measures, as noted above, which can be taken to have the form
$d\mu_{\bb}$ in an adapted coordinate chart. This measure and the principal
symbol of $L$ then determine the symmetric quadratic form $Q$. Conversely, if
$L_Q$ is the second order operator determined from $Q$ as above, then in a
neighborhood of a point on $\pa P,$ the difference $L-L_Q$ is a vector field
tangent to $\pa P,$ possibly with mildly singular coefficients, as
in~\eqref{eqn19.006}. The estimates produced by the Moser method are local,
which allows us to establish H\"older regularity for weak solutions of many
classes of generalized Kimura diffusion operators.

We assume that symmetric quadratic form is positive definite in the interior of $S_{n,m}$. Because
of the form of the coefficients, there is a naturally induced quadratic form on any boundary stratum,
and we assume that each of these is also positive definite on the interior of that stratum.  We assume
finally that the coefficients of the quadratic form, 
$$\{a_{ij}(x;y),c_{jl}(x;y),d_{lm}(x;y)\},$$
are smooth functions of the variables  $(\sqrt{\bx};\by).$ Integrating by parts shows that the
natural boundary conditions are the same as those defining the ``regular solution'' for a generalized 
Kimura operator introduced in~\cite{EpMaz2}, see Section~\ref{ssec4.1.006}. 

Writing the integrand symbolically as $\langle A(x;y)\nabla u,\nabla v\rangle,$ we also consider 
operators which include non-symmetric terms of the form
\begin{equation}
  \int\limits_{B}\langle A(x;y)\nabla u,X(x;y)\rangle \, d\mu_{\bb}(x;y),
\end{equation}
where $X(x;y)=(X_1(x;y),\dots,X_{n+m}(x;y))$ are continuous in the interior of $S_{n,m}.$ 
That is, we allow the addition of an arbitrary continuous tangent vector field. We defer the 
development of this case to Section~\ref{sec4.2.001}, and focus on the symmetric case.

Changing variables in the Dirichlet form by $w_i=\sqrt{x_i}$ gives
\begin{equation}
d\mu_{\bb}(w;y)=2^n\prod_{i=1}^nw_i^{2b_i(w;y)-1}dwdy,
\end{equation}
and 
\begin{multline}\label{eqn23.009}
Q_B(u,v)=\frac{1}{4}\int\limits_{B}\Bigg[\sum_{j=1}^n\pa_{w_j}u\pa_{w_j}v+\sum_{i,j}
a_{ij}\pa_{w_i}u\pa_{w_j}v+\\
2\sum_{j,l}c_{jl}[\pa_{w_j}u\pa_{y_l}v+\pa_{w_j}v\pa_{y_l}u]+
4\sum_{l,m}d_{lm}\pa_{y_l}u\pa_{y_m}v \Bigg]d\mu_{\bb}(w;y).
\end{multline}
The ellipticity hypothesis is that 
\begin{equation}\label{eqn88.006}
q_{(w;y)}(\xi,\eta)=\frac{1}{4} \Bigg[\sum_{j=1}^n\xi_j^2+\sum_{i,j} 
a_{ij}\xi_i\xi_j+ 4\sum_{j,l}c_{jl}\xi_j\eta_l+ 4\sum_{l,m}d_{lm}\eta_l\eta_m\Bigg]
\end{equation}
(the coefficients of which are constant outside a compact set) is positive definite, i.e., 
there are positive constants $\lambda, \Lambda$ so that
\begin{equation}\label{eqn74}
  \lambda(|\xi|^2+|\eta|^2)\leq q_{(w;y)}(\xi,\eta)\leq \Lambda(|\xi|^2+|\eta|^2).
\end{equation}
Since we are primarily interested in local estimates near the corner $(0;0),$ we assume that
\begin{equation}\label{eqn90.006}
  \text{The quadratic form in~\eqref{eqn88.006} is uniformly elliptic in $S_{n,m}.$}
\end{equation}
Observe that this is invariant under the dilations $(w;y)=(\mu w';\mu y'),$ $\mu>0,$ which transform the 
measure $d\mu_{\bb}(w;y)$ to 
\begin{equation}
  \mu^{m}2^n\prod_{i=1}^n(w_i')^{2\tb_i(w';y')-1}e^{\phi(w',y')\log\mu}dw'dy',
\end{equation}
where 
\begin{equation}
\tb_i(w',y')=b_i(\mu w',\mu y'), \quad \mbox{and} \quad \phi(w',y')=2\sum_{i=1}^nb_i(\mu w',\mu y').
\end{equation}

Sturm introduces the notation that if $u$ is in the domain of $Q$, then there is a measure 
$d\Gamma(u,u)$ so that
\begin{equation}
Q(u,u)=\int\limits_{S_{n,m}}d\Gamma(u,u).
\end{equation}
In our case 
\begin{equation}
d\Gamma(u,u)(w;y)=\langle A(w;y)\nabla u,\nabla u\rangle d\mu_{\bb}(w;y).
\end{equation}
Next, in terms of the space of functions
\begin{equation}
\cU_1=\{u:\:\langle A(w;y)\nabla u,\nabla u\rangle\leq 1\},
\end{equation}
the \emph{intrinsic metric} is defined by 
\begin{equation}
\rho_i((w_1;y_1),(w_2;y_2))=\sup\{u(w_1;y_1)-u(w_2;y_2):\: u\in \cU_1\}.
\end{equation}
By~\eqref{eqn90.006}, this intrinsic metric is uniformly equivalent to the Euclidean metric,
\begin{equation}
\rho^e_2((w_1;y_1),(w_2;y_2))= \left(\|w_1-w_2\|^2_2+\|y_1-y_2\|^2_2\right)^{\frac12},
\end{equation}
or equivalently, in terms of the $(x;y)$ coordinates, 
\begin{equation}
\rho^e_2((x_1;y_1),(x_2;y_2))=\left(\sum_{j=1}^n|\sqrt{x_{1j}}-\sqrt{x_{2j}}|^2+\|y_1-y_2\|^2_2\right)^{\frac12}.
\end{equation}
This determines the standard topology on $S_{n,m}$. It is equivalent to the
metric used in~\cite{EpMaz2} to define the anisotropic H\"older spaces
$\cC^{k,\gamma}_{\WF}$ and $\cC^{k,2+\gamma}_{\WF},$ which play a key role in
the analysis of generalized Kimura diffusion operators. 
The ball of radius $r$ centered at $(w;y)$ with respect to $\rho^e_{2}$ is
denoted $B^e_r(w;y)$.

The main estimates on the heat kernel and solutions to the heat equations  follow by a rather general
argument once we prove that: 
\begin{enumerate}
\item  The measure, $d\mu_{\bb}$ is a doubling measure, and
\item The Dirichlet forms, $Q_{B^i_r}$ satisfy scale-free $L^2$ Poincar\'e
  inequalities, for intrinsic-metric balls $B^i_r$.
\end{enumerate}
The proofs of these facts both proceed by checking their validity when $\bb$ is
constant and then using perturbative arguments to conclude their validity in general.
The details of this analysis occupy the remainder of this section.

First observe that for the purposes of proving the Poincar\'e inequality, we
may replace the quadratic form $q(\nabla_wu,\nabla_yu),$ defined
in~\eqref{eqn88.006}, with the standard Euclidean one, giving the equivalent
Dirichlet form
\begin{equation}
Q^e_{B}(u,u)=\int\limits_{B}\left[\sum_{j=1}^n|\pa_{w_j}u(\tw;\ty)|^2+
\sum_{l=1}^m|\pa_{y_l}u(\tw;\ty)|^2\right]d\mu_{\bb}(\tw;\ty),
\end{equation}
while of course retaining the same measure $d\mu_{\bb}$. We may also use the equivalent $\ell^\infty$ metric,
\begin{equation}
\rho^e_{\infty}((w_1;y_1),(w_2;y_2))=\max\{\|w_1-w_2\|_{\infty},\|y_1-y_2\|_{\infty}\},
\end{equation}
with respect to which closed balls now have the form
\begin{equation}
B_r(w;y)=\prod_{j=1}^n[\max\{w_i-r,0\},w_i+r]\times \prod_{l=1}^m[y_l-r,y_l+r].
\end{equation}
Indeed, the inclusions 
$$B_r^e(w;y)\subset B_r(w;y)\subset B^e_{\sqrt{n+m}\cdot r}(w;y)$$
show that $d\mu_{\bb}$ is a doubling measure with respect to one set of balls if and
only if it is a doubling measure with respect to the other. As for the Poincar\'e inequality, 
suppose that we prove that there exists a constant $C > 0$ such that
\begin{equation}\label{eqn79.004}
\int\limits_{B_r(w;y)}|u(\tw;\ty)-u_{B_r(w;y)}|^2d\mu_{\bb}(\tw;\ty)\leq C r^2Q_{B_r(w;y)}(u,u)
\end{equation}
for all $r > 0$ and $(w;y)\in S_{n,m}$, where (for any measurable set $B$), $u_B$ is the average 
\begin{equation}
u_{B}=\frac{1}{\mu_{\bb}(B)}\int\limits_{B}u(\tw;\ty)d\mu_{\bb}(\tw;\ty), \qquad 
\mu_{\bb}(B)=\int\limits_{B} d\mu_{\bb}(\tw;\ty).
\end{equation}
We then have that 
\begin{equation}
\begin{split}
\int\limits_{B^e_r(w;y)}|u(\tw;\ty)-u_{B^e_r(w;y)}|^2d\mu_{\bb}(\tw;\ty)&\leq
  \int\limits_{B^e_r(w;y)}|u(\tw;\ty)-u_{B_r(w;y)}|^2d\mu_{\bb}(\tw;\ty)\\
&\leq\frac{C}{n+m} [(n+m)r^2]Q_{B^e_{r\sqrt{n+m}}(w;y)}(u,u).
\end{split}
\end{equation}
In other words, the strong Poincar\'e inequality for the family of $\{B_r(w;y)\}$ implies a weak Poincar\'e inequality 
for the balls $\{B^e_r(w;y)\}.$ Theorem 2.4 from~\cite{Sturm3} then implies that the strong Poincar\'e also holds for 
the balls $B^e_r(w;y)$.  We obtain the estimate~\eqref{eqn79.004} following a well-known argument of Jerison~\cite{Jerison1}, 
who shows how to pass from a weak scale-invariant Poincar\'e inequality to a
strong one. 

There is one further preparatory remark. The volume doubling and Poincar\'e
inequality, and hence the various conclusions that they imply, require very
little regularity for the functions $\{b_i\}$. The minimal condition that naturally
emerges here is that there is a constant $C$ so that for each $i$,
\begin{equation}
|b_i(w; y) - b_i(\tw; \ty)| \leq \frac{C}{|\log \rho_\infty^e( (w;y) , (\tw; \ty) )|}.  
\label{regb}
\end{equation}

\bigskip

We first prove that $d\mu_{\bb}$ is a doubling measure:
\begin{proposition}\label{prop3.1.0004} 
Let $\{b_i(w;y)\}$ be positive functions which are constant outside a compact set and
satisfy \eqref{regb}. Then there is a constant $D$ so that for any $r>0$ and $(w;y)\in S_{n,m}$, 
\begin{equation}\label{eqn78.0003}
\mu_{\bb}(B_{2r}(w;y)) \leq 2^D \mu_{\bb}(B_{r}(w;y)).
\end{equation}
\end{proposition}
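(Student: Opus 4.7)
The plan is to reduce the doubling of the variable-weight measure $d\mu_{\bb}$ to the constant-weight case, using the log-modulus condition~\eqref{regb} to control the variation of $\bb$ on each ball. Throughout, write $B_r = B_r(w;y)$. Since $\bb$ is constant outside a compact set, one may restrict to balls meeting a fixed compact neighborhood of the corner and to small radii $r \le r_0$; outside this range doubling is straightforward from Lebesgue doubling modified by bounded power-type weights.

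\textbf{Step 1 (constant weights).} If $\bb \equiv (\beta_1,\ldots,\beta_n)$ is constant, the ball factors as $B_r = \prod_{i=1}^n [\max(w_i - r, 0), w_i + r] \times [y-r, y+r]^m$, so
\[
\mu_{\bb}(B_r) = (2r)^m \prod_{i=1}^n \frac{(w_i+r)^{2\beta_i} - (\max(w_i-r, 0))^{2\beta_i}}{2\beta_i}.
\]
A case analysis in each coordinate, according to whether $w_i \ge 2r$ or $w_i < 2r$ (i.e., whether the interval touches the face $\{\tw_i = 0\}$), shows that every one-dimensional factor grows by at most $2^{2\beta_i + O(1)}$ when $r$ is replaced by $2r$. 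Combined with the $(2r)^m$ factor this yields $\mu_{\bb}(B_{2r}) \le 2^D \mu_{\bb}(B_r)$ with $D \le m + 2\sum_i \beta_i + O(1)$, uniformly in $(w;y)$.

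\textbf{Step 2 (freezing via the log modulus).} For general $\bb$, set $\bar b_i = \sup_{B_{2r}} b_i(\tw; \ty)$ and $\underline b_i = \inf_{B_{2r}} b_i(\tw; \ty)$. Since $B_{2r}$ has diameter at most $4r$, condition~\eqref{regb} gives $\bar b_i - \underline b_i \le C/|\log r|$ for $r$ small. Monotonicity of $s \mapsto \tw_i^s$ yields, for $\tw_i \in (0, 1]$,
\[
\tw_i^{2\bar b_i - 1} \le \tw_i^{2 b_i(\tw;\ty) - 1} \le \tw_i^{2 \underline b_i - 1},
\]
while for $\tw_i$ bounded and bounded away from $0$ the endpoints are uniformly equivalent. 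Integrating, $\mu_{\bb}(B_r)$ is sandwiched (up to a uniform factor) between the constant-weight measures of $B_r$ with exponents $\bar{\bb}$ and $\underline{\bb}$. The key observation is that these two sandwich measures themselves differ only by a bounded factor: each one-dimensional ratio
\[
\frac{\int_0^{w_i+r} \tw_i^{2 \underline b_i - 1}\, d\tw_i}{\int_0^{w_i+r} \tw_i^{2 \bar b_i - 1}\, d\tw_i} = \frac{\bar b_i}{\underline b_i} \cdot (w_i + r)^{2(\underline b_i - \bar b_i)}
\]
is bounded, since $|(\underline b_i - \bar b_i) \log(w_i+r)| \le C|\log(w_i+r)|/|\log r| = O(1)$ for $w_i+r \in [r, R]$ with $R$ fixed.

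\textbf{Combining, and the main obstacle.} Chaining these estimates, $\mu_{\bb}(B_{2r}) \le C_1 \mu_{\underline{\bb}}(B_{2r}) \le C_1 \cdot 2^D \mu_{\underline{\bb}}(B_r) \le C_1^2 \cdot 2^D \mu_{\bb}(B_r)$, which gives the desired doubling inequality after absorbing $C_1^2$ into the exponent $D$. The chief technical difficulty arises in the ``boundary'' regime $w_i < r$: on the interval $[0, w_i+r]$, $\tw_i$ can be arbitrarily small compared with $r$, so any attempted \emph{pointwise} freezing of $b_i$ fails (the factor $\tw_i^{2(b_i(\tw;\ty) - \beta_i)}$ blows up as $\tw_i \to 0$, because $|\log \tw_i|$ dominates $|\log r|$). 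The rescue is to work at the level of the integral: the sandwich
\[
\frac{r^{2\bar b_i}}{2\bar b_i} \le \int_0^r \tw_i^{2 b_i(\tw;\ty) - 1}\, d\tw_i \le \frac{r^{2\underline b_i}}{2\underline b_i},
\]
together with the fact that $r^{2(\underline b_i - \bar b_i)} = \exp(2(\underline b_i - \bar b_i) \log r)$ is bounded by the log-modulus hypothesis, absorbs the pointwise pathology at the level of averages. The final doubling exponent $D$ depends on $m$, $n$, $\sup_i \sup b_i$, and the constant in~\eqref{regb}, but is uniform in $(w;y)$ and $r$.
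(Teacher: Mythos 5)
Your proof is correct, but it takes a genuinely different route from the paper's. The paper argues by contradiction and rescaling: it assumes the ratio $F(w;y,r)=\mu_{\bb}(B_{2r}(w;y))/\mu_{\bb}(B_{r}(w;y))$ is unbounded along a sequence, rules out $r_\ell\to\infty$ and escaping centers using the constancy of $\bb$ outside a compact set, and in the critical regime $r_\ell\to 0$ rescales $(w;y)=r_\ell(\hat w;\hat y)$, writes each exponent as its value at the rescaled center plus an error $\beta_i$, and uses \eqref{regb} exactly where you do---to see that $|\beta_i|\,|\log r_\ell|$ (and $|\beta_i|\,\log\hat w_i$) stays bounded---so the variable-exponent correction factors can be discarded and the constant-weight computation finishes. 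You instead argue directly: freeze the exponents at $\bar b_i=\sup_{B_{2r}}b_i$ and $\underline b_i=\inf_{B_{2r}}b_i$, sandwich $\mu_{\bb}$ pointwise between the two constant-exponent product measures, apply the constant-weight doubling, and close the loop by showing the two frozen measures of a ball are uniformly comparable because $(\bar b_i-\underline b_i)\,|\log(w_i+r)|=O(1)$; this is the same underlying mechanism, organized as a quantitative comparison rather than a compactness argument. Your route yields an explicit doubling constant and avoids the limiting case analysis; the paper's scheme treats all degenerations (radii tending to $0$ or $\infty$, escaping centers) uniformly within one argument. Two points you should make explicit: (i) your displayed one-dimensional ratio is an identity only when $w_i\le r$; for $w_i>r$ the relevant ratio is $\frac{\bar b_i}{\underline b_i}(w_i+r)^{2(\underline b_i-\bar b_i)}\cdot\frac{1-s^{2\underline b_i}}{1-s^{2\bar b_i}}$ with $s=(w_i-r)/(w_i+r)$, and since $\underline b_i\le\bar b_i$ the last factor is at most $1$, so your stated bound still dominates; (ii) the reduction to small radii and centers near the corner is legitimate, but for large balls meeting the region where $\bb$ varies the "straightforward" step should be an additive-correction argument (as in the paper's $\mu_{\bb}=\mu_{\bb^0}+A$ device), not a pointwise comparison of weights, since $\tw_i^{2\beta_0-1}$ and $\tw_i^{2B-1}$ are not uniformly comparable as $\tw_i\to 0$.
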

\begin{proof} 
We first verify this when the $b_i$ are everywhere constant. Since each $B_r(w;y)$ is a product of
intervals, we immediately reduce to the one-dimensional case, where $B_r(w_i) = (\max\{w_i-r,0\}, w_i + r)$,
and hence for the measure $\mu_b = w_i^{2b-1}dw_i$, 
\begin{equation}
\mu_b(B_r(w_i)) = 
\begin{cases} \frac{(w_i+r)^{2b}}{2b}&\text{ if  }w_i\leq r \\ 
\frac{(w_i+r)^{2b}-(w_i-r)^{2b}}{2b}&\text{ if }w_i> r. 
\end{cases}
\end{equation}
It follows directly from this that for some constant $C_b > 0$, 
\begin{equation}
\frac{1}{C_b}r^{2b}\leq \mu_{b}(B_r(w_i))\leq C_b r^{2b},\text{ if }w_i\leq 4r,
\end{equation}
and 
\begin{equation}
\frac{1}{C_b}w_i^{2b-1}r\leq \mu_b(B_r(w_i))\leq C_b w_i^{2b-1}r\text{ if }w_i>r.
\end{equation}
The doubling inequality \eqref{eqn78.0003} follows immediately from these
estimates in this case.

For the general case, we need to show that the quotient
\[
F(w;y, r) := \frac{\mu_{\bb}(B_{2r}(w;y))}{\mu_{\bb}(B_{r}(w;y))}
\]
is uniformly bounded from above.  Suppose that it is not, i.e., there exists a
sequence of radii $r_\ell$ and centers of balls $(w^{(\ell)}; y^{(\ell)})$ such
that $F(w^{(\ell)}; y^{(\ell)}, r_\ell)$ tends to infinity. Since $F$ is
clearly continuous in its arguments $(w;y) \in S_{n,m}$ and $0 < r < \infty$,
this unboundedness could only occur if either $(w^{(\ell)}; y^{(\ell)})$
diverges (to infinity or $\pa P$) or else $r_\ell$ tends to $0,$ or $\infty.$ We
shall rule these possibilities out in turn.

The first case, where $r_\ell \nearrow \infty$, is easy. Suppose that the
functions $\{b_i\}$ are constant outside the ball $B_R(0;0)$.  Consider the worst
case, when $(w^{(\ell)};y^{(\ell)}) = (0;0)$. But then, for $\rho \gg R$,
\[
\mu_{\bb}(B_\rho(0;0)) = \mu_{\bb^0}(B_\rho(0;0)) + A, \quad
A = \mu_{\bb}(B_R(0;0)) - \mu_{\bb^0}(B_R(0;0)),
\]
where $\bb^0$ is the constant value of $\bb$ outside a compact set. The uniform
upper bound for $F(0;0; r_\ell)$ is then straightforward. A slightly more
complicated estimate, which we leave to the reader, is required when the center
of the ball does not lie at the origin, but the same conclusion still holds.
The case where $r_\ell$ remains in a bounded interval $0 < \underline{r} \leq
r_\ell \leq \overline{r} < \infty$, but $(w^{(\ell)};y^{(\ell)}) \to \infty$,
is covered by the computations when $\bb$ is constant.

Finally, suppose that the centers $(w^{(\ell)};y^{(\ell)})$ remain in $B_{2R}(0;0)$ and $r_\ell \searrow 0$. 
Change variables, setting $w_i = r_\ell \hat{w}_i$, $y_i = r_\ell \hat{y}_i$, where $(\hat{w}; \hat{y}) \in
B_{2R/r_\ell}(0;0)$. The centers $(w^{(\ell)};y^{(\ell)})$ are transformed to new points $(\hat{w}^{(\ell)};\hat{y}^{(\ell)})$, and
\[
F(w^{(\ell)};y^{(\ell)}, r_\ell) = \frac{\int_{B_2(\hat{w}^{(\ell)}; \hat{y}^{(\ell)})} \prod \hat{w}_i^{2 \hat{b}_i(\hat{w}; \hat{y})-1} 
e^{(2\hat{b}_i(\hat{w};\hat{y})-1)\log r_\ell}\, d\hat{w} d\hat{y}}{\int_{B_1(\hat{w}^{(\ell)}; \hat{y}^{(\ell)})} \prod \hat{w}_i^{2 \hat{b}_i(\hat{w}; \hat{y})-1} 
e^{(2\hat{b}_i(\hat{w};\hat{y})-1)\log r_\ell}\, d\hat{w} d\hat{y}}. 
\]
Here $b_i(\hat{w}; \hat{y}) = b_i(r \hat{w}; r\hat{y}) = b_i(w;y)$. Note that a common factor of $r_\ell^{m+n}$ has been 
cancelled from both the numerator and denominator.  The dependence on $r_\ell$ is now entirely contained in the
functions $\hat{\bb}(\hat{w}; \hat{y})$. We are aided by the fact that each
$\hat{b}_i$ takes values in some interval $0< \underline{\beta} \leq
\hat{b}_i \leq \overline{\beta} < \infty$. Now substitute 
\[
\hat{b}_i(\hat{w};\hat{y}) = \hat{b}_i(\hat{w}^{(\ell)};\hat{y}^{(\ell)}) + \beta_i(r_\ell(\hat{w};\hat{y}), r_\ell(\hat{w}^{(\ell)};\hat{y}^{(\ell)}))
\]
into the final exponent in each integrand.  The expression 
\[
\exp((2\hat{b}_i(\hat{w}^{(\ell)};\hat{y}^{(\ell)})-1)\log r_\ell)
\]
is constant and appears in both the numerator and denominator, hence may be cancelled.  We are left with
\[
\frac{\int_{B_2(\hat{w}^{(\ell)}; \hat{y}^{(\ell)})} \prod \hat{w}_i^{2 \hat{b}_i(\hat{w}; \hat{y})-1} e^{\beta_i \log r_\ell}\, d\hat{w} 
d\hat{y}}{\int_{B_1(\hat{w}^{(\ell)}; \hat{y}^{(\ell)})} \prod \hat{w}_i^{2 \hat{b}_i(\hat{w}; \hat{y})-1} 
e^{\beta_i\log r_\ell}\, d\hat{w} d\hat{y}},  
\]
where we omit the arguments of the $\beta_i$ for simplicity. According to \eqref{regb}, 
\begin{equation}
|\beta_i(r_\ell(\hat{w};\hat{y}), r_\ell(\hat{w}^{(\ell)};\hat{y}^{(\ell)}))|
\, |\log r_\ell| \leq \frac{ C |\log r_\ell|}{| \log r_{\ell}+
\log \rho_\infty^e((\hat{w};\hat{y}), (\hat{w}^{(\ell)};\hat{y}^{(\ell)}))|}
\leq C'
\end{equation}
since $\rho_\infty^e( (\hat{w};\hat{y}), (\hat{w}^{(\ell)}; \hat{y}^{(\ell)})) \leq 2$.   Hence these second factors are
bounded above and below, and may be disregarded. 

There are now two final cases to analyze. In the first, the centers
$(\hat{w}^{(\ell)}; \hat{y}^{(\ell)})$ remain bounded (relative to the
$(\hat{w}; \hat{y})$ coordinate system), and in the second they do not.  The
first case is slightly easier, since we may assume that $<(\hat{w}^{(\ell)};
\hat{y}^{(\ell)})>$ converges, and then simply pass to the limit $r_\ell \to
0.$ Both the numerator and denominator have finite, positive limits, and so we
conclude that this sequence of quotients is bounded after all.  In the second
case, the numerator and denominator each tend to infinity with $\ell$.  The
functions $\hat{\bb}$ are constant outside the ball $B_{R/r_\ell}(0;0)$. If the
centers $(\hat{w}^{(\ell)}; \hat{y}^{(\ell)})$ lie outside this ball, then the
quotient is clearly bounded. So the only remaining case is when
$(\hat{w}^{(\ell)}; \hat{y}^{(\ell)}) \in B_{R/r_\ell}(0;0)$.  We may now
perform the same substitution as above, writing each $\hat{w}_i^{2
  \hat{b}_i(\hat{w}; \hat{y}) - 1}$ as $\hat{w}_i^{2
  \hat{b}_i(\hat{w}^{(\ell)}; \hat{y}^{(\ell)}) - 1} e^{ \beta_i \log
  \hat{w}_i}$ Since $\log \hat{w}_i \leq C(1 + \log r_\ell)$, we may apply
exactly the same reasoning as above to neglect these error terms, and then
cancel the remaining constant terms.

This contradiction demonstrates that the quotient is uniformly bounded as $r
\searrow 0,$ and therefore that $\mu_{\bb}$ is a doubling measure.
\end{proof}
\begin{remark} This Lemma is slightly more complicated than one might expect because the measures $\mu_{\bb}$ 
are not Ahlfors $(m+n)$-regular. Indeed, $\mu_{\bb}(B_r)$ is bounded above and below by constant multiples of 
$r^{m+n}$ provided the ball does not intersect the boundary, but these constants are not uniform. The decay rates
of measures of small balls centered at any boundary point are given by different powers of $r.$ Hence 
our problem provides an interesting example where the most common version of Moser's arguments to get heat 
kernel bounds does not apply, since these require Ahlfors regularity, so the variant of these arguments
given in~\cite{SaloffCosteLMS} is needed. 
\end{remark}

We now turn to the proof of the scale-invariant Poincar\'e inequality. As we have explained earlier, 
it suffices to prove the following result:
\begin{theorem}\label{prop3.2.003} Suppose that the functions $\{b_i(w;y):\:i=1,\dots,n\},$
defined in $\bbR_+^n\times\bbR^m,$ satisfy \eqref{regb}, are bounded below by a positive constant, and 
are constant outside a compact set. Then there is a constant $C$ so that for any $0<r,$ and $(w;y)\in S_{n,m}$ and
$u\in\cC^1(\overline{B^i_r(w;y)})$, 
\begin{equation}\label{eqn179.005}
\int\limits_{B^i_r(w;y)}|u(\tw;\ty)-u_{B_r(w;y)}|^2d\mu_{\bb}(\tw;\ty)\leq Cr^2Q^e_{B^i_r(w;y)}(u,u).
\end{equation}
\end{theorem}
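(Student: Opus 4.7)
The plan is to follow the two-step strategy used for doubling: first establish the Poincar\'e inequality when $\bb$ is a constant vector $\bb^0$, then pass to the general case by a rescaling argument that exploits the modulus-of-continuity hypothesis~\eqref{regb}. Thanks to the preparatory reduction already discussed, it suffices to prove~\eqref{eqn79.004} for the product balls $B_r(w;y)$ with the Euclidean Dirichlet form $Q^e$; the passage to the intrinsic balls appearing in~\eqref{eqn179.005} is then handled by Theorem~2.4 of~\cite{Sturm3} together with the uniform equivalence of the intrinsic and Euclidean metrics.

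\emph{Constant weights.} When $\bb\equiv\bb^0$, the measure $d\mu_{\bb^0}$ is a product of one-dimensional Jacobi-type measures $w_i^{2b^0_i-1}dw_i$ together with the Lebesgue factors in the $y_l$, and $Q^e$ is a sum of squared partial derivatives. By the standard tensorization principle for Poincar\'e inequalities, it is enough to establish a uniform one-dimensional estimate
\[
\int_I |u(w)-u_I|^2 w^{2b-1}\,dw \leq C r^2 \int_I |u'(w)|^2 w^{2b-1}\,dw
\]
on every interval $I=[\max\{w_0-r,0\},w_0+r]$, with $C$ depending only on $b\geq\beta_0$. For $w_0\geq 2r$, the weight $w^{2b-1}$ is comparable to $w_0^{2b-1}$ throughout $I$, and the inequality reduces to the classical unweighted Poincar\'e inequality on an interval. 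For $w_0<2r$, a rescaling $w\mapsto w/r$ reduces to a fixed interval $\tilde I\subseteq [0,3]$, on which the weighted Poincar\'e inequality for $\tilde w^{2b-1}d\tilde w$ is a standard fact (via Hardy's inequality or a compactness argument using the positivity of $b$).

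\emph{Perturbation to non-constant weights.} For general $\bb$ I mimic the scaling from Proposition~\ref{prop3.1.0004}: change variables by $w=r\hat w$, $y=r\hat y$, so that $B_r(w_0;y_0)$ becomes the unit product ball $B_1(\hat w_0;\hat y_0)$. Both sides of~\eqref{eqn79.004} transform homogeneously under this scaling, and the problem is reduced to controlling the change of the weight. Writing
\[
\hat w_i^{2\hat b_i(\hat w;\hat y)-1} \;=\; \hat w_i^{2\hat b_i(\hat w_0;\hat y_0)-1}\,\exp\!\Bigl(2\bigl(\hat b_i(\hat w;\hat y)-\hat b_i(\hat w_0;\hat y_0)\bigr)\log\hat w_i\Bigr),
\]
the hypothesis~\eqref{regb}, exactly as exploited in the proof of Proposition~\ref{prop3.1.0004}, forces the exponential factor to be bounded above and below by positive constants that are independent of $r$ and of $(\hat w_0;\hat y_0)$. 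Consequently $d\mu_{\hat\bb}$ and the constant-weight product measure $d\mu_{\hat\bb(\hat w_0;\hat y_0)}$ are uniformly comparable on $B_1(\hat w_0;\hat y_0)$, and so are the corresponding Euclidean Dirichlet forms, which have identical integrands and differ only through their density. Applying the constant-weight Poincar\'e inequality from the previous step and absorbing the bounded ratios of measures on both sides yields~\eqref{eqn79.004} with a constant depending only on $\beta_0$, the upper bound of $\bb$, and the constant in~\eqref{regb}.

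\emph{Main obstacle.} The delicate case is $r\searrow 0$ with the center approaching the corner of $S_{n,m}$, because then $|\log\hat w_i|$ can be large even after rescaling. This is exactly the regime in which one must carefully balance $|\log\hat w_i|$ against the logarithmic denominator in~\eqref{regb}, splitting further into the subcases where the rescaled centers $(\hat w_0^{(\ell)};\hat y_0^{(\ell)})$ remain bounded and where they escape to infinity, and making the same substitution $\hat b_i(\hat w;\hat y)=\hat b_i(\hat w_0;\hat y_0)+\beta_i$ used in Proposition~\ref{prop3.1.0004}. Once that uniform comparison is in place, the tensorized one-dimensional Jacobi Poincar\'e inequality and Sturm's lifting theorem produce the statement for the intrinsic balls $B^i_r(w;y)$.
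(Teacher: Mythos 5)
Your reduction to product balls, the tensorized one--dimensional treatment of the constant--weight case, and the appeal to Sturm and Jerison to upgrade to intrinsic balls are all consistent with the paper's setup (the paper gets the uniform one--dimensional constant via an explicit Bessel--eigenvalue computation rather than Hardy/compactness, but that difference is harmless provided you make the constant uniform in $b\in[\beta_0,B]$). The genuine gap is in your perturbation step. After rescaling $w=r\hat w$, the density ratio you must control is $\exp\bigl(2(b_i(w;y)-b_i(w_0;y_0))\log w_i\bigr)$, and \eqref{regb} only gives $|b_i(w;y)-b_i(w_0;y_0)|\lesssim C/|\log r|$, since points of the ball lie at distance comparable to $r$ from the center. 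When the ball meets $\pa S_{n,m}$ (i.e.\ some $w_{0,i}\leq r$, the main case of interest), the coordinate $w_i$ ranges over all of $[0,w_{0,i}+r]$, so $|\log w_i|$ (equivalently $|\log\hat w_i|$) is unbounded there: taking $w_i=e^{-1/r}$ produces an exponent of size about $C/(r|\log r|)$. Hence $d\mu_{\bb}$ and the frozen measure $d\mu_{\bb(w_0;y_0)}$ are \emph{not} uniformly comparable pointwise on the whole ball, and the two--sided absorption you invoke fails exactly in the regime you flag as delicate. The analogy with Proposition~\ref{prop3.1.0004} does not rescue this: the doubling proof only needs comparisons of \emph{integrals}, where the factor $\hat w_i^{\beta_i}$ can be absorbed as a small shift of the exponent (changing volumes of balls by bounded factors), whereas transferring a Poincar\'e inequality by your argument requires the density ratio to be bounded above and below pointwise, on both sides of the inequality.

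This is precisely why the paper does not freeze coefficients on the full ball $B_r(w_0;y_0)$. Instead it uses Jerison's covering of $E=B^i_r(w_0;y_0)$ by Whitney--type balls $B_\sigma(w;y)$ with $\sigma\leq 10^{-2}\dist((w;y),\pa S_{n,m})$; on such balls (and their dilates $10B$) one has $\tw_i\geq \tfrac{9}{10}\bar w$ with $\bar w=\dist((w;y),\pa S_{n,m})$, so $|\log \tw_i|\lesssim|\log\bar w|$ while \eqref{regb} at scale $\sigma\lesssim\bar w$ gives $|b_i(\tw;\ty)-b_i(w;y)|\lesssim C/|\log\bar w|$, and the product is bounded — this is the content of the lemma preceding Corollary~\ref{cor3.1.005}. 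The local Poincar\'e inequalities with uniform constant are then assembled into the inequality on $E$ by the chaining argument (Lemmas~\ref{lem3.4.005}--\ref{lem3.7.005} and the maximal--function estimate) in Appendix~\ref{appC}. To repair your proof you would need either this covering/chaining step or a direct argument for variable $b_i$ on boundary balls; the global freezing argument as written does not yield \eqref{eqn179.005}.
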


The proof is somewhat more complicated than in the doubling measure result above; it uses
a covering argument due to Jerison~\cite{Jerison1} which produces a cover of $B^i_r(w;y)$ by smaller 
balls where the approximation of the functions $\{b_i(\tw;\ty)\}$ by constants is permissible.  
Thus the first step is to prove the result when the $b_i$ are all constant for
balls with respect to the $\rho^e_{\infty}$-metric. 

\begin{proposition}\label{prop3.2.002}
  Let $\bb_0=(b_{01},\dots,b_{0n})$ be a vector of positive constants. There is
  a constant $C_{\bb_0}$ so that for all $0<r,$ $(w;y)\in S_{n,m}$ and
  $u\in\cC^1(\overline{B_r(w;y)})$ we have the estimate
\begin{equation}\label{eqn46.009}
\int\limits_{B_r(w;y)}|u-u_{\bb_0}|^2d\mu_{\bb_0}(\tw,\ty)\leq C_{\bb_0}r^2\int\limits_{B_r(w;y)}
\left| \nabla_{\tw; \ty} u \right|^2 d\mu_{\bb_0}(\tw,\ty),
\end{equation}
where
\begin{equation}
u_{\bb_0}=\frac{1}{\mu_{\bb_0}(B_r(w;y))}\int\limits_{B_r(w;y)}ud\mu_{\bb_0}.
\end{equation}
Moreover there is a constant $C_{\beta, B,m}$ so that if $0<\beta<b_{0j}<B$ for
$1\leq j\leq n,$ then
\begin{equation}
  C_{\bb_0}\leq C_{\beta,B,m}.
\end{equation}
\end{proposition}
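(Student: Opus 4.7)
\emph{Proof plan.} The key structural fact is that $d\mu_{\bb_0}$ is a product measure,
\[
d\mu_{\bb_0}(\tw;\ty)=\prod_{i=1}^n\tw_i^{2b_{0i}-1}\,d\tw_i\cdot d\ty,
\]
that $B_r(w;y)=\prod_{i=1}^n I_i\times\prod_{l=1}^m J_l$ with $I_i=[\max\{w_i-r,0\},w_i+r]$ and $J_l=[y_l-r,y_l+r]$, and that the Dirichlet integral on the right of \eqref{eqn46.009} is a sum of $n+m$ integrals, each containing only a single partial derivative of $u$. The standard tensorization principle for Poincar\'e inequalities (via the conditional variance decomposition and Jensen's inequality applied to the inner mean) therefore reduces \eqref{eqn46.009} to the corresponding one-dimensional Poincar\'e inequalities in each factor, with the resulting product constant bounded by the sum of the factor constants.

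For the Lebesgue factors $J_l$ one has the classical Poincar\'e inequality on an interval of length at most $2r$. For the factor $I_i$ with measure $d\mu_b=s^{2b-1}ds$, $b:=b_{0i}$, I split into two cases. When $w_i\geq 2r$, the interval $I_i=[w_i-r,w_i+r]$ avoids the boundary $\{s=0\}$, the weight on $I_i$ satisfies $\max/\min\leq 3^{|2b-1|}$, and the claim reduces to the unweighted Poincar\'e inequality on an interval of length $2r$ after factoring the weight out of both sides. When $w_i<2r$, the length $L:=|I_i|$ lies in $[r,3r]$, and the substitution $s=L\tau$ absorbs the factor of $r^2$ and converts the assertion into a scale-invariant Poincar\'e inequality on the rescaled interval $\tilde I:=L^{-1}I_i\subset[0,1]$ (of length at least $1/3$) for the model measure $\tau^{2b-1}d\tau$.

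This reduces everything to a model $[0,1]$-inequality that must be uniform in $b\in[\beta_0,B]$. If $\tilde I$ is bounded away from $0$, the weight is bounded above and below and the unweighted Poincar\'e inequality applies. If $\tilde I=[0,\ell]$ meets the origin, a direct Hardy-type computation gives
\[
\int_0^\ell (u(\tau)-u(\ell))^2\tau^{2b-1}d\tau\leq \frac{1}{2b}\int_0^\ell(u'(\tau))^2\tau^{2b-1}d\tau,
\]
(integrating $|u(\tau)-u(\ell)|^2\leq (\ell-\tau)\int_\tau^\ell(u')^2$ against $\tau^{2b-1}d\tau$ and using $\tau\leq \ell$), and replacing $u(\ell)$ by the mean $u_{\tilde I}$ only decreases the left-hand side. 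All constants depend only on $b\in[\beta_0,B]$ and on the dimensions; reassembly via tensorization produces the claimed $C_{\bb_0}\leq C_{\beta,B,m}$. The only genuinely delicate step is obtaining the $[0,1]$-inequality with a constant that is uniform in $b$, but the Hardy computation above keeps the constant explicit (of order $1/(2\beta_0)$) as long as $b$ is bounded away from $0$. I do not anticipate any serious obstacle; the product structure of both the measure and the $\ell^\infty$-balls does essentially all of the work.
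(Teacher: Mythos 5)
Your argument is correct in substance, but it takes a genuinely different route from the paper's. The paper identifies the optimal constant in \eqref{eqn46.009} with the first nonzero eigenvalue of the associated Neumann problem, separates variables over the product ball, and analyzes the resulting one-dimensional weighted problems explicitly: in the boundary case the eigenfunctions are expressed through $z^{1-b}J_{b-1}(z)$ and the constant is controlled by a series estimate on the first zero ($z_{1,b}^2\ge 4(1+\beta)$); the intermediate regime ($1<x<2$ after scaling) is handled by extending the test function by a constant down to $0$, and the far regime by comparing the weight to a constant, exactly as in your first case. You replace the spectral separation of variables by tensorization of the variance, and the Bessel analysis by an elementary Cauchy--Schwarz/Fubini (Hardy-type) estimate. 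Your route is shorter and avoids special functions, at the cost of the sharp eigenvalue information; since only uniformity of the constant for $b\in[\beta,B]$ is used downstream (Corollary~\ref{cor3.1.005} and the Jerison covering argument), that loss is immaterial.

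One point needs repair. In the regime $w_i<2r$, your dichotomy ``$\tilde I$ bounded away from $0$'' versus ``$\tilde I=[0,\ell]$'' does not cover, with uniform constants, the balls with $r\le w_i<2r$ and $w_i-r\ll r$: after rescaling, the left endpoint $a=(w_i-r)/(2r)$ is positive but arbitrarily small, so the weight ratio on $\tilde I$ is of size $((a+1)/a)^{|2b-1|}$ and the ``comparable weight'' branch is not uniform, while the interval does not meet the origin. Fortunately your own Hardy computation closes this gap verbatim, since nothing in it uses $a=0$: for $0\le a<\ell$ and $\tau\in[a,\ell]$, Cauchy--Schwarz gives $|u(\tau)-u(\ell)|^2\le(\ell-\tau)\int_\tau^\ell|u'|^2\,ds$, and Fubini yields
\begin{equation*}
\int_a^\ell|u(\tau)-u(\ell)|^2\tau^{2b-1}\,d\tau
\;\le\;\int_a^\ell|u'(s)|^2\Bigl(\int_0^s(\ell-\tau)\tau^{2b-1}\,d\tau\Bigr)ds
\;\le\;\frac{\ell^2}{2b}\int_a^\ell|u'(s)|^2 s^{2b-1}\,ds,
\end{equation*}
using $\int_0^s(\ell-\tau)\tau^{2b-1}\,d\tau\le \ell s^{2b}/(2b)\le(\ell^2/(2b))\,s^{2b-1}$ for $s\le\ell$; replacing $u(\ell)$ by the $\mu_b$-mean only decreases the left-hand side. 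So in the entire regime $w_i<2r$ you may apply this bound directly to $\tilde I=[a,\ell]$ and no separate interior subcase is needed. (Also note $\tilde I\subset[0,3/2]$ rather than $[0,1]$ when $r\le w_i<2r$, which merely changes the constant to $9/(8b)$.) With that adjustment, and the tensorization carried out via the standard subadditivity of variance for the normalized product measure, the proof is complete and uniform for $\beta<b_{0j}<B$.
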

\begin{proof}[Proof of Proposition~\ref{prop3.2.002}]
The sharp constant $1/C_{\bb_0}$ in~\eqref{eqn46.009} is the first non-zero eigenvalue of the operator $L_{\bb_0}$ 
associated to this Dirichlet form, acting on functions on $B_r(w;y),$ which satisfy appropriate ``Neumann'' boundary
conditions.

The formal operator is given by
\begin{equation}
L_{\bb_0}u=-\left[\sum_{j=1}^n\left(\pa_{w_j}^2+\frac{2b_{0j}-1}{w_j}\pa_{w_j}\right)u+\Delta_yu\right].  
\end{equation}
Recall that the ball is a product
\begin{equation}
  B_r(w;y)=\prod_{j=1}^n[\max\{w_j-r,0\},w_j+r]\times \prod_{l=1}^m[y_l-r,y_l+r].
\end{equation}
Since the form domain is $\CI(\overline{B_r(w;y)})$, we see that on smooth elements in the domain of 
$L_{\bb_0},$ the boundary condition is the standard Neumann one on the ``tangential'' boundary: $\ty_l=y_l\pm r,$ 
i.e., $\pa_{\ty_l}u\restrictedto_{\ty_l=y_l\pm r}=0.$ On the right ends of the intervals, 
\begin{equation}
  \lim_{\tw_j\to (w_j+r)^-}\pa_{w_j}u(\tw;\ty)=0,
\end{equation}
and similarly, on the left ends when $w_j-r>0,$ 
\begin{equation}
  \lim_{\tw_j\to (w_j-r)^+}\pa_{w_j}u(\tw;\ty)=0.
\end{equation}
However, when $w_j<r,$ then the boundary condition at the left endpoint becomes
\begin{equation}
   \lim_{\tw_j\to 0^+}\tw_j^{2b_{0j}-1}\pa_{w_j}u(\tw;\ty)=0.
\end{equation}
The domain of the Friedrichs extension of $L_{\bb_0}$ is denoted $\cD(L_{\bb_0}).$  We are
clearly in a setting where the spectral data for $(L_{\bb_0},\cD(L_{\bb_0}))$ can be determined 
by separation of variables. This is one reason why we replaced the Euclidean balls
by sup-norm balls. The eigenfunctions take the form 
\begin{equation}
  f_1(\tw_1)\cdots f_n(\tw_n)g_1(\ty_1)\cdots g_m(\ty_m),
\end{equation}
where each factor is an eigenfunction of the appropriate boundary value problem
in 1-dimension.  The first non-zero eigenvalue of $(L_{\bb_0},\cD(L_{\bb_0}))$
is then the minimum of the first non-trivial eigenvalues of these $(n+1)$ self
adjoint operators.  For the $y$-variable the first non-trivial eigenvalue for
the Neumann operator on an interval of length $2r$ is $\frac{\pi^2}{4 r^2}.$ This
leaves the 1-dimensional problems in the $w$-variables, which we treat in the following  lemma.
\begin{lemma}\label{lem3.1.006}
  If $0<\beta<B$ then there is a positive constant $\lambda_{\beta,B}$ so that
  for $0\leq x,$ $\beta<b<B$
  and $u\in\cC^1([0\vee(x-r),x+r])$ we have the estimate
  \begin{equation}
    \lambda_{\beta,B}\leq \frac{r^2\int\limits_{\max\{x-r,0\}}^{x+r}|\pa_{w}u(w)|^2w^{2b-1}dw}
{\int\limits_{\max\{x-r,0\}}^{x+r}|u(w)-\bar{u}_b|^2w^{2b-1}dw},
  \end{equation}
where
\begin{equation}
  \bar{u}_b=\frac{\int\limits_{\max\{x-r,0\}}^{x+r}u(w)w^{2b-1}dw}
{\int\limits_{\max\{x-r,0\}}^{x+r}w^{2b-1}dw}.
\end{equation}
\end{lemma}
\begin{proof}
Fix $b>0$. For each $0\leq x$ and $0<r$ we need to estimate the infimum, over functions with 
$w^{2b-1}dw$-mean zero, of the quotient:
\begin{equation}
  \frac{r^2\int\limits_{\max\{x-r,0\}}^{x+r}|\pa_{w}u(w)|^2w^{2b-1}dw}
{\int\limits_{\max\{x-r,0\}}^{x+r}|u(w)|^2w^{2b-1}dw}
\end{equation}
Replacing $x$ by $x/r$, we reduce to the case $r=1$, but still with arbitrary center $x\in [0,\infty).$ Let $\bar{u}$ 
denote the mean of $u.$ As usual, there are three cases:
\begin{enumerate}
\item[Case 1:] If $x<1,$ then to estimate
  \begin{equation}\label{eqn141.006}
  \inf_{\{u:\bar{u}=0\}}  \frac{\int\limits_{0}^{x+1}|\pa_{w}u(w)|^2w^{2b-1}dw}
{\int\limits_{0}^{x+1}|u(w)|^2w^{2b-1}dw}.
  \end{equation}
we find eigenfunctions of the operator
\begin{equation}
\begin{split}
  L_bu=-w^{1-2b}\pa_ww^{2b-1}&\pa_wu\text{ with }\\
\lim_{w\to
    0^+}w^{2b-1}\pa_wu(w)=0&\text{ and }\pa_wu(x+1)=0.
\end{split}
\end{equation}
Solutions of the eigenvalue equation
\begin{equation}
  \pa_w^2u+\frac{2b-1}{w}\pa_wu+\lambda^2u=0
\end{equation}
are in terms of $J$-Bessel functions by
\begin{equation}
w^{1-b}[AJ_{1-b}(\lambda w)+BJ_{b-1}(\lambda w)],
\end{equation}
at least for $b\notin\bbN.$  The boundary condition at $w=0$ implies that $A=0.$  Indeed, if $\nu \notin \bbN$, 
then $J_{\nu}(z)=a_{\nu}z^{\nu}(1+O(z^2))$, so $w^{1-b}J_{1-b}(\lambda w) \sim c w^{2-2b}$, and the boundary condition
at $w=0$ eliminates this term. If $b=1,$ the singular solution has leading term $\log w,$ which is again eliminated
by the boundary condition. Finally, for $b\in\bbN,$ $b > 1$, $w^{1-n}J_{n-1}(\lambda w)$ is the only regular solution. 
Thus, the solution is $(\lambda w)^{1-b}J_{b-1}(\lambda w)$ whenever $b > 0$. This is an entire function which
oscillates infinitely many times as $w \to \infty$. Let $z_{1,b}$ be the smallest positive root of the equation:
\begin{equation}
\pa_{z}[z^{1-b}J_{b-1}(z)]=0,    
\end{equation}
then the smallest non-trivial eigenvalue is:
\begin{equation}
\lambda^2_1=\left(\frac{z_{1,b}}{1+x}\right)^2.
\end{equation}
This gives the infimum of the functional in~\eqref{eqn141.006} for any $x\geq 0,$ which proves
useful in the analysis of the next case. 

We now derive bounds for the constant $z_{1,b}$, depending on the upper and lower bounds of $b$. 
For any $b > 0$, define 
\begin{equation}
  \phi_b(\zeta)=\sum_{k=0}^{\infty}\frac{(-1)^k\zeta^k}{k!\Gamma(k+b)}.
\end{equation}
This is an entire function satisfying the ODE
\begin{equation}
  \zeta\pa_{\zeta}^2\phi_b+b\pa_{\zeta}\phi_b+\phi_b=0
\end{equation}
and the functional equation
\begin{equation}
\pa_{\zeta}\phi_b=-\phi_{b+1}.
\end{equation}
A simple calculation shows that there is a constant $C_b$ so that
\begin{equation}
  z^{1-b}J_{b-1}(z)=C_b\phi_b\left(\frac{z^2}{4}\right)
\end{equation}
Thus $\zeta_{1,b}=z^2_{1,b}/4$ is the smallest positive solution to $\phi_{b+1}(\zeta)=0.$ We can
rewrite $\Gamma(b+1)\phi_{b+1}(\zeta)$ as
\begin{multline}
  \Gamma(b+1)\phi_{b+1}(\zeta)=1-\frac{\zeta}{b+1}+\\
\sum_{k=1}^{\infty}\frac{\zeta^{2k}}{(2k)!(b+1)\cdots (b+2k)}\left[1-\frac{\zeta}{(2k+1)(b+2k+1)}\right],
\end{multline}
from which we see that
\begin{equation}
  (b+1)<\zeta_{1,b}.
\end{equation}
Thus for any $0<\beta<B$
there is a constant $1\leq M_{B}$ so that
\begin{equation}
  4(1+\beta)\leq z^2_{1,b}\leq M_{B}\text{ if }\beta\leq b\leq B.
\end{equation}

\item[Case 2:] If $1<x<2,$ it is simpler to estimate
  \begin{equation}
  \inf_{\{u\in \cC^1[x-1,x+1]\}}  \frac{\int\limits_{x-1}^{x+1}|\pa_{w}u(w)|^2w^{2b-1}dw}
{\int\limits_{x-1}^{x+1}|u(w)-\tu|^2w^{2b-1}dw},
  \end{equation}
from below; here
\begin{equation}
  \tu =\frac{\int\limits_{x-1}^{x+1}u(w)w^{2b-1}dw}{\int\limits_{x-1}^{x+1}w^{2b-1}dw}.
\end{equation}

The analysis in the previous case shows that if $\beta<b<B,$ then for $x \geq 0$, 
\begin{equation}\label{eqn153.005}
  \frac{4(1+\beta)}{(1+x)^2}\int\limits_{0}^{x+1}|u(w)-\bar{u}|^2w^{2b-1}dw\leq
\int\limits_{0}^{x+1}|\pa_{w}u(w)|^2w^{2b-1}dw.
\end{equation}
We define the extension of any $u\in\cC^1([x-1,x+1])$ to 
\begin{equation}
  U(w)=\begin{cases} &u(w)\text{ for }w\in [x-1,x+1]\\
&u(x-1)\text{ for }w\in [0,x-1],
\end{cases}
\end{equation}
which is a function on $[0,x+1]$. This extension is admissible for the inequality
in~\eqref{eqn153.005}, so 
\begin{equation}\label{eqn153.006}
\begin{split}
\int\limits_{x-1}^{x+1}|\pa_{w}u(w)|^2w^{2b-1}dw
=& \int\limits_{0}^{x+1}|\pa_{w}U(w)|^2w^{2b-1}dw\\
\geq&
\frac{4(1+\beta)}{(1+x)^2}\int\limits_{0}^{x+1}|U(w)-\bar{U}|^2w^{2b-1}dw\\
\geq&
 \frac{4(1+\beta)}{(1+x)^2}\int\limits_{x-1}^{x+1}|u(w)-\bar{U}|^2w^{2b-1}dw.
\end{split}
\end{equation}
It is a classical fact that the minimum of
\begin{equation}
  \int\limits_{x-1}^{x+1}|u(w)-a|^2w^{2b-1}dw
\end{equation}
is attained only when $a=\tu,$ and therefore
\begin{equation}
\frac{4(1+\beta)}{(1+x)^2}\leq\frac{  \int\limits_{x-1}^{x+1}|\pa_{w}u(w)|^2w^{2b-1}dw}
  {\int\limits_{x-1}^{x+1}|u(w)-\tu|^2w^{2b-1}dw},
\end{equation}
completing the argument in this case as well.
\item[Case 3:] If $2\leq x,$ then observe that
  \begin{equation}
   \frac{\int\limits_{x-1}^{x+1}|\pa_{w}u(w)|^2w^{2b-1}dw}
{\int\limits_{x-1}^{x+1}|u(w)|^2w^{2b-1}dw}\geq 
\left(\frac{x-1}{x+1}\right)^{|2b-1|}
\frac{\int\limits_{x-1}^{x+1}|\pa_{w}u(w)|^2dw}
{\int\limits_{x-1}^{x+1}|u(w)|^2dw},
\end{equation}
to conclude that, via  Fisher's min-max principle, that
\begin{equation}
 \frac{\pi^2}{4\cdot 3^{|2b-1|}}\leq \inf_{\{u:\bar{u}=0\}}  
\frac{\int\limits_{x-1}^{x+1}|\pa_{w}u(w)|^2w^{2b-1}dw}
{\int\limits_{x-1}^{x+1}|u(w)|^2w^{2b-1}dw}.
  \end{equation}
\end{enumerate}
\end{proof}

Lemma~\ref{lem3.1.006} implies the result for the $n+m$-dimensional case with
constant weights $\bb_0,$ which completes the proof of Proposition~\ref{prop3.2.002}. 
\end{proof}

We now give the proof of Theorem~\ref{prop3.2.003}. Let $E=B^i_r(w_0;y_0)$ be
the (intrinsic) ball with center 
$(w_0;y_0)$ and radius $r>0$. As noted earlier, we use Jerison's covering argument,
essentially as in~\cite[Theorem 5.3.4]{SaloffCosteLMS}. For the convenience of the reader we outline
the argument, highlighting places where our argument differs from the 
standard one. If $B= B_r(w;y),$ then for any $k > 0$, write 
\begin{equation}
  kB=B_{kr}(w;y).
\end{equation}

We let $\cF$ denote a collection of countably many $\rho^e_{\infty}$-balls in $E$  with the
following properties:
\begin{enumerate}
\item The balls $B\in \cF$ are disjoint.
\item The balls $\{2B:\: B\in\cF\}$ are a cover of $E.$
\item If $B\in\cF$, then its radius satisfies
\begin{equation}
r(B)=10^{-3}d(B,\pa E).
\end{equation}
\item There exists a constant $K$ depending only on the doubling constant so that
\begin{equation}\label{eqn132.005}
\sup_{(w;y)\in E}\#\{B\in\cF: (w;y)\in 10^2 B\}\leq K. 
\end{equation}
\end{enumerate}
Here and throughout this argument $d(\cdot,\cdot)$ should be understood as the
distance defined by the metric $\rho^e_{\infty}.$

The existence of such a `Jerison covering' $\cF$ satisfying these properties is
standard.  Several additional properties of $\cF$ are established
in~\cite{Jerison1}, and these are essential to the argument that follows.  We
are using the metric $\rho^e_{\infty}$ to define the balls in the covering. While the shortest
paths for this metric are not unique. Euclidean geodesics (i.e., straight line
segments) are length-minimizing paths for $\rho^e_{\infty}$, so, by convention,
we use these, thereby rendering the choice of shortest path unique.

Note that if $B_{\sigma}(w;y)\in\cF$, then 
\begin{equation}
  \sigma\leq 10^{-2}\dist((w;y),E^c)\leq  10^{-2}\dist((w;y),\pa S_{n,m}).
\end{equation}
Our use of Jerison's argument rests on the following lemma:
\begin{lemma}  There is a positive constant $C_1,$ so that if the ball $B = B_{\sigma}(w;y)$ has radius 
\begin{equation}
\sigma\leq  10^{-2}\dist((w;y),\pa S_{n,m}),
\end{equation}
then for any $(\tw;\ty)\in 10B,$ we have
  \begin{equation}\label{eqn133.005}
    \frac{1}{C_1}\leq \frac{\prod\limits_{j=1}^n\tw_i^{b_i(\tw;\ty)}}
{\prod\limits_{j=1}^n\tw_i^{b_i(w;y)}}\leq C_1.
\end{equation}
\end{lemma}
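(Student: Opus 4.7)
The plan is to take the logarithm of the quotient and show that
\[
L(\tw;\ty) := \sum_{i=1}^n \bigl(b_i(\tw;\ty) - b_i(w;y)\bigr)\log \tw_i
\]
is bounded uniformly in $(w;y)$, $(\tw;\ty)$, and $\sigma$. Exponentiating then yields the two-sided bound with $C_1 = e^{\sup|L|}$. I would first dispose of the trivial case: by hypothesis, the $b_i$ are constant outside a compact set $K \subset S_{n,m}$, say contained in $\{\|(w;y)\|_\infty \leq R_0\}$. If neither $(w;y)$ nor $(\tw;\ty)$ lies in $K$, then $b_i(w;y) = b_i(\tw;\ty)$ for every $i$ and $L \equiv 0$. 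Otherwise, since $\rho^e_\infty((w;y),(\tw;\ty)) \leq 10\sigma$, both points satisfy $\|(w;y)\|_\infty, \|(\tw;\ty)\|_\infty \leq R_0 + 10\sigma$; moreover, the defining inequalities $100\sigma \leq \min_j w_j \leq R_0$ force $\sigma \leq R_0/100$, so we are automatically in the small-$\sigma$ regime.

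Next, I would obtain pointwise bounds on the two factors in each summand. The hypothesis $\sigma \leq 10^{-2} \dist((w;y),\pa S_{n,m}) = 10^{-2}\min_j w_j$ gives $\tw_j \geq w_j - 10\sigma \geq 90\sigma$, while the previous step gives $\tw_j \leq R_0 + 10\sigma$. Consequently there is an absolute constant $C_2$ with
\[
|\log \tw_j| \leq C_2\bigl(1 + |\log \sigma|\bigr).
\]
For the coefficient difference, invoke the logarithmic modulus of continuity \eqref{regb}: with $d := \rho^e_\infty((w;y),(\tw;\ty)) \leq 10\sigma$, and $\sigma$ small enough that $10\sigma < 1$,
\[
|b_i(\tw;\ty) - b_i(w;y)| \leq \frac{C}{|\log d|} \leq \frac{C}{|\log (10\sigma)|}.
\]
The crucial cancellation is then
\[
|L| \leq \sum_{i=1}^n \frac{C}{|\log(10\sigma)|} \cdot C_2\bigl(1 + |\log \sigma|\bigr) \leq C',
\]
uniformly, because $|\log\sigma|/|\log 10\sigma|$ is bounded as $\sigma \to 0$. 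This is precisely the point of the hypothesis \eqref{regb}: it is calibrated so that the $\log \tw_i$ singularity, which can be as bad as $|\log\sigma|$, is exactly absorbed by the modulus of continuity of $b_i$.

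What remains is to handle values of $\sigma$ bounded away from $0$. Since $(w;y) \in K$ forces $\sigma \leq R_0/100$, we only need $\sigma \in [\sigma_*, R_0/100]$ for some fixed $\sigma_* > 0$ to be excluded from the asymptotic argument above. On this compact range of $\sigma$, $\tw_j$ is bounded away from $0$ and from $\infty$, and the $b_i$ are continuous (in fact differentiable by hypothesis) on the compact set where they vary; thus $L$ is manifestly continuous and hence uniformly bounded. Combining the two regimes produces a single constant $C_1$ that works for all admissible balls, completing the proof. I expect the only delicate point to be the uniformity in the center $(w;y)$: one must track that the bounds on $\tw_j$ from above and below, and on the difference $|b_i(\tw;\ty) - b_i(w;y)|$, depend only on $\sigma$ and the fixed data $(R_0, \beta_0, C)$, not on the particular ball, which is the content of the reduction to the compact set $K$ in the first paragraph.
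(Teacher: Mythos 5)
Your argument is correct and is essentially the paper's own proof: the same logarithmic cancellation between the modulus of continuity \eqref{regb} at scale comparable to $\sigma$ and the bound $|\log \tw_j|\lesssim 1+|\log\sigma|$ (available because $\tw_j\geq 90\sigma$ and $\tw_j$ is bounded above), with the remaining range of parameters handled by compactness/constancy of the $b_i$ outside a compact set; the paper merely organizes the case split by the distance $\bar w=\dist((w;y),\pa S_{n,m})$ of the center to the boundary instead of by $\sigma$. The only blemish is the chain ``$100\sigma\leq\min_j w_j\leq R_0$'', which in the subcase where only $(\tw;\ty)$ lies in $K$ should read $\min_j w_j\leq R_0+10\sigma$ (giving $\sigma\leq R_0/90$), a harmless adjustment.
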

\begin{proof}
 Recall that there is are constant $0<\beta_0$ and $C$ so that
  for $1\leq i\leq n$ and $(w;y)\in S_{n,m}$
\begin{equation}\label{eqn134.005}
 \beta_0\leq b_i(w;y)\text{ and }
|b_i(w; y) - b_i(\tw; \ty)| \leq \frac{C}{|\log \rho_\infty^e( (w;y) , (\tw; \ty) )|}.  
\end{equation}
 Moreover there is an $R$ so that the functions $\{b_{i}(w;y):\: i=1,\dots,n\}$
are constant in $[B_R(0;0)]^c.$ 
From this is it clear that if we fix any positive number $\rho,$ then there is
a constant $C_0$ so that if $\dist((w;y),b S_{n,m})>\rho$ and $\sigma\leq
10^{-2}\dist((w;y),b S_{n,m}),$ then, for all $(\tw;\ty)\in 10B_{\sigma}(w;y)$ we
have the estimate:
\begin{equation}
   \frac{1}{C_0}\leq \frac{\prod\limits_{j=1}^n\tw_i^{b_i(\tw;\ty)}}
{\prod\limits_{j=1}^n\tw_i^{b_i(w;y)}}\leq C_0.
  \end{equation}
Thus we only need to consider balls with centers close to $\pa S_{n,m}.$ 

Let $\rho<1/10$ and assume that $\dist((w;y),b S_{n,m})<\rho.$ Let
$B_{\sigma}(w;y)$ be a ball with $\sigma\leq 10^{-2}\dist((w;y),\pa S_{n,m}).$
And let $\bar{w}=\min\{w_1,\dots,w_n\},$ clearly $\bar{w}=\dist((w;y),b S_{n,m}).$ If
$(\tw;\ty)\in 10B_{\sigma}(w;y),$ then the $w$-coordinates satisfy
\begin{equation}
  \frac{9\bar{w}}{10}<\tw_i<1,
\end{equation}
and therefore 
\begin{equation}
  |\log\tw_i|\leq|\log \frac{9\bar{w}}{10}|.
\end{equation}
The ratio in~\eqref{eqn133.005} satisfies the estimate
\begin{equation}
  \exp\left(-nC\frac{\log\left(\frac{9\bar{w}}{10}\right)}{\log\left(\frac{\bar{w}}{10}\right)}\right)
 \leq \frac{\prod\limits_{j=1}^n\tw_i^{b_i(\tw;\ty)}}{\prod\limits_{j=1}^n\tw_i^{b_i(w;y)}}\leq
 \exp\left(nC\frac{\log\left(\frac{9\bar{w}}{10}\right)}{\log\left(\frac{\bar{w}}{10}\right)}\right).
 \end{equation}
The lemma follows follows easily from these bounds.
\end{proof}
Combining this lemma with Proposition~\ref{prop3.2.002}, and the Courant-Fisher
min-max principle, we obtain the corollary:
\begin{corollary}\label{cor3.1.005} Assume that the exponents $\{b_i(w;y)\}$
satisfy $0<\beta_0\leq b_i(w;y),$ the estimate in~\eqref{regb},
and are bounded above by $B.$ Let $1\leq \kappa\leq 10.$ 
There is a constant $C$ depending on $\beta_0,\beta_1, B$ and the dimension $(n+m)$ so that 
if $B_{\sigma}(w;y)$ is a ball with 
\begin{equation}\label{eqn135.004}
\sigma\leq  10^{-2}\dist((w;y),\pa S_{n,m}),
\end{equation}
then for any $u\in\cC^1(\overline{B_{\kappa\sigma}(w;y)})$ we have the estimate
\begin{multline}
\int\limits_{B_{\kappa\sigma}(w;y)}|u(\tw;\ty)-\bar{u}|^2d\mu_{\bb}(\tw;\ty)\leq\\
 C (\kappa\sigma)^2  \int\limits_{B_{\kappa\sigma}(w;y)}|\nabla u(\tw;\ty)|^2 d\mu_{\bb}(\tw;\ty),
\end{multline}
where
\begin{equation}
\bar{u}=\frac{ \int\limits_{B_{\kappa\sigma}(w;y)}u(\tw;\ty)d\mu_{\bb}(\tw;\ty)}{\mu_{\bb}(B_{\kappa\sigma}(w;y))}.
\end{equation}
In particular, this estimate holds for any ball $B$ belonging to a covering $\cF$ as defined above.
\end{corollary}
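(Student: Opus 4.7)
My plan is to reduce to the constant-weight case from Proposition~\ref{prop3.2.002} by freezing $\bb$ at the center of the ball and using the preceding lemma to absorb the discrepancy. Set $\bb_0=\bb(w;y)$, viewed as a constant vector. Since $\kappa\leq 10$, the ball $B_{\kappa\sigma}(w;y)$ is contained in $10B=B_{10\sigma}(w;y)$, and the hypothesis \eqref{eqn135.004} together with the lemma established just above tells me that on all of $10B$,
\begin{equation}
\frac{1}{C_1}\prod_{j=1}^n \tw_j^{2b_{0j}-1} \leq \prod_{j=1}^n \tw_j^{2b_j(\tw;\ty)-1}\leq C_1\prod_{j=1}^n \tw_j^{2b_{0j}-1},
\end{equation}
so that $\frac{1}{C_1}d\mu_{\bb_0}\leq d\mu_{\bb}\leq C_1\, d\mu_{\bb_0}$ on $B_{\kappa\sigma}(w;y)$, with $C_1$ depending only on $\beta_0$, $B$, $n$, and the constant in~\eqref{regb}.

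Next I apply Proposition~\ref{prop3.2.002} directly to the constant-weight measure $d\mu_{\bb_0}$ on the sup-norm ball $B_{\kappa\sigma}(w;y)$, producing
\begin{equation}
\int_{B_{\kappa\sigma}(w;y)}|u-u_{\bb_0}|^2 d\mu_{\bb_0}\leq C_{\bb_0}(\kappa\sigma)^2\int_{B_{\kappa\sigma}(w;y)}|\nabla u|^2 d\mu_{\bb_0},
\end{equation}
where $u_{\bb_0}$ is the $d\mu_{\bb_0}$-average and, by the last assertion of Proposition~\ref{prop3.2.002}, $C_{\bb_0}\leq C_{\beta_0,B,m}$ uniformly in the frozen value $\bb_0$.

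With the Poincar\'e inequality in hand for $d\mu_{\bb_0}$, the transfer to $d\mu_{\bb}$ proceeds by two elementary steps. First, multiplying by $C_1$ on each side of the measure comparison yields
\begin{equation}
\int_{B_{\kappa\sigma}(w;y)}|u-u_{\bb_0}|^2 d\mu_{\bb}\leq C_1^{2} C_{\bb_0}(\kappa\sigma)^2 \int_{B_{\kappa\sigma}(w;y)}|\nabla u|^2 d\mu_{\bb};
\end{equation}
this is precisely the Courant--Fisher min--max observation that bounded changes of measure alter the Rayleigh quotient by at most a bounded factor. Second, the $d\mu_{\bb}$-mean $\bar u$ minimizes $c\mapsto \int_{B_{\kappa\sigma}(w;y)}|u-c|^2 d\mu_{\bb}$ over real constants $c$, so replacing $u_{\bb_0}$ by $\bar u$ on the left only decreases the integral, giving the asserted estimate.

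The one place where a little care is required is verifying that the constant $C_1$ of the preceding lemma, obtained under the hypothesis $\sigma\leq 10^{-2}\dist((w;y),\pa S_{n,m})$, remains valid on the slightly larger concentric ball of radius $\kappa\sigma\leq 10\sigma$; but this is exactly the regime $(\tw;\ty)\in 10B$ already treated in that lemma, so no further work is needed. All constants depend only on $\beta_0$, $B$, $n+m$, and the modulus~\eqref{regb}, which is the uniformity stated in the corollary.
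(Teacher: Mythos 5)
Your proof is correct and follows the same route the paper takes: freeze $\bb$ at the ball's center, invoke the preceding lemma to get a two-sided comparison of $d\mu_{\bb}$ with $d\mu_{\bb_0}$ on $10B$, apply the constant-weight Poincar\'e inequality of Proposition~\ref{prop3.2.002} with the uniform constant $C_{\beta_0,B,m}$, transfer back via the measure equivalence (the ``Courant--Fisher / Rayleigh quotient'' step the paper alludes to), and then use the variational characterization of the $d\mu_{\bb}$-mean to replace $u_{\bb_0}$ by $\bar u$. This is precisely the combination ``lemma $+$ Proposition~\ref{prop3.2.002} $+$ Courant--Fisher'' that the paper states as its one-line proof.
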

The remaining lemmas needed to apply the argument from Section 5.3
of~\cite{SaloffCosteLMS} to prove Theorem~\ref{prop3.2.003} are given in
Appendix~\ref{appC}, where we have sketched the details of the proof using this
covering argument, in part, because it applies immediately to establish an
important generalization of this inequality wherein we include cutoff functions
in the integrals.  The proof of the following result is very similar to the one
given above and in Appendix~\ref{appC}.
\begin{proposition}\label{prop3.4}
  For $(x;y)\in S_{n,m}$ and $r>0,$ let
  \begin{equation}
    \phi(\cdot)=\max\left\{\left[1-\frac{\rho_i(\cdot,(x;y))}{r}\right],0\right\}.
  \end{equation}
Under the hypotheses of the previous theorem, there is a constant $C$ so that
\begin{equation}
\int\limits_{B^i_{r}(x;y)}|u-u_{\phi}|^2\phi^2d\mu_{\bb}(\tx,\ty)\leq C r^2\int\limits_{B^i_r(x;y)}\phi^2(\tx,\ty)
\langle A\nabla u,\nabla u\rangle d\mu_{\bb}(\tx,\ty),
\end{equation}
where
\begin{equation}
  u_{\phi}=\frac{\int\limits_{B^i_{r}(x;y)}\phi^2 ud\mu_b{\tx,\ty}}{\int\limits_{B^i_{r}(x;y)}\phi^2 d\mu_b{\tx,\ty}}.
\end{equation}
\end{proposition}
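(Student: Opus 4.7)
The plan is to adapt the Jerison covering argument from the proof of Theorem~\ref{prop3.2.003}, now working with the weighted measure $\phi^{2}\,d\mu_{\bb}$. The key observation is that on each ball of a Jerison covering of $E := B^i_r(x;y)$, the cutoff $\phi$ is quasi-constant, so the local estimates from the unweighted case carry over after absorbing universal constants.

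Fix a Jerison covering $\cF$ of $E$ exactly as in the proof of Theorem~\ref{prop3.2.003}, so each $B = B_\sigma(w;y)\in\cF$ satisfies $\sigma = 10^{-3} d(B,\pa E)$. Since $\phi$ is $(1/r)$-Lipschitz with respect to $\rho_i$, and $\rho_i$ is uniformly equivalent to $\rho^e_\infty$ by \eqref{eqn90.006}, the oscillation of $\phi$ on $10B$ is at most $C\sigma/r$. Moreover, since $E$ is an intrinsic ball one has $\phi=d_i(\cdot,\pa E)/r$ on $E$, so at the center of $B$ the value $\phi(w;y)$ is at least $d_i(B,\pa E)/r$, which by metric equivalence and the defining property of $\cF$ is bounded below by $c\sigma/r$ for a universal $c$. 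Combining these bounds, there is a universal constant $K_0 > 1$ such that for every $B \in \cF$,
\begin{equation*}
\frac{1}{K_0}\,\sup_{10B}\phi \leq \inf_{10B}\phi.
\end{equation*}
In particular, on each dilate $\kappa B$ with $1 \leq \kappa \leq 10$, the measure $\phi^{2}\,d\mu_{\bb}$ is comparable to a constant multiple of $d\mu_{\bb}$, with comparison constants depending only on $K_0$.

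This quasi-constancy upgrades Corollary~\ref{cor3.1.005} to its weighted version on each $\kappa B$: since $\bar u^{\phi}_{\kappa B}$ is the $L^{2}(\phi^{2}d\mu_{\bb})$-minimizer of $\int_{\kappa B}|u-c|^{2}\phi^{2}\,d\mu_{\bb}$ over constants $c$, replacing $c$ by the unweighted mean $\bar u_{\kappa B}$ and pulling the (quasi-constant) factor $\phi^{2}$ in and out of the integrals yields
\begin{equation*}
\int_{\kappa B}|u-\bar u^{\phi}_{\kappa B}|^{2}\phi^{2}\,d\mu_{\bb} \leq C(\kappa\sigma)^{2}\int_{\kappa B}\phi^{2}\langle A\nabla u,\nabla u\rangle\,d\mu_{\bb},
\end{equation*}
with $C$ independent of $B$. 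With these local weighted Poincar\'e inequalities in hand, I would run the global patching argument from Appendix~\ref{appC} using $\phi^{2}\,d\mu_{\bb}$ rather than $d\mu_{\bb}$.

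The principal obstacle will be the assembly step. Jerison's chain-of-balls machinery relies on doubling of the underlying measure and on bounded variation of averages across overlapping balls of $\cF$. Doubling of $\phi^{2}\,d\mu_{\bb}$ on scales comparable to an individual Jerison ball follows from the quasi-constancy of $\phi$ combined with Proposition~\ref{prop3.1.0004}, and the finite-overlap property \eqref{eqn132.005}, together with this same quasi-constancy, ensures that $\phi^{2}$-weighted averages vary by a bounded ratio along any chain of Jerison balls connecting two points of $E$. Modulo this bookkeeping, the proof proceeds verbatim as in Theorem~\ref{prop3.2.003}.
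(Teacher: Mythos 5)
Your proposal follows exactly the route the paper intends: the authors state that Proposition~\ref{prop3.4} is proved by running the Jerison covering argument of Appendix~\ref{appC} with the cutoff $\phi$ carried along, and you have supplied the key observation that makes this work, namely that $\phi$ is quasi-constant on (small dilates of) each Jerison ball because the Jerison radius is a fixed small fraction of the distance to $\pa E$, which in turn is comparable to $r\phi$ at the center. The absorption of the $\phi^2$ factor into each local Poincar\'e inequality and the bookkeeping through the chain and maximal-function steps are precisely the ``details'' the paper leaves to the reader, so your argument is correct and matches the paper's proof.
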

\noindent
This is Corollary 2.5 in~\cite{Sturm3}. It is needed for Moser's proof of the parabolic Harnack inequality. 

An important consequence of these results is the Sobolev inequality, Theorem 2.6 in~\cite{Sturm3}:
\begin{theorem} Let $D\geq 3$ be such that for all $0<r,$ and $(x;y)\in S_{n,m}$ we have the doubling property
\begin{equation}
\mu_{\bb}(B^i_{2r}(x;y))\leq 2^D\mu_{\bb}(B^i_{r}(x;y)).
\end{equation}
For all functions in $\cD(Q_{B^i_r(x;y)}),$ we have the estimate
\begin{multline}\label{SobIneq}
\left[\int\limits_{B^i_r(x;y)}|u|^{\frac{2D}{D-2}}d\mu_{\bb}(\tx,\ty)\right]^{\frac{D-2}{D}}\leq\\
C_S\frac{r^2}{[\mu_{\bb}(B^i_r(x;y))]^{\frac 2D}}\left[Q_{B^i_r(x;y)}(u,u)+\frac{1}{r^2}(u,u)_{\bb}\right].
\end{multline}
\end{theorem}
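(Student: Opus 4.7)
The plan is to deduce the Sobolev inequality as a formal consequence of the two facts already established in this section: the volume doubling property (Proposition~\ref{prop3.1.0004}) and the scale-invariant Poincar\'e inequality with cutoffs (Proposition~\ref{prop3.4}). In the metric-measure Dirichlet form framework, the passage from doubling plus Poincar\'e to Sobolev is precisely the content of Theorem 2.6 in~\cite{Sturm3}, whose argument is an adaptation of Saloff-Coste's treatment in~\cite{SaloffCosteLMS}. No further analytic properties of the measure $d\mu_{\bb}$, of the coefficient matrix $A$, or of the singular lower order terms in $L_Q$ are required beyond what has just been checked, so the proof amounts to verifying that the abstract machinery applies in our setting and then invoking it.

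Concretely, I would carry out the argument in three steps. First, iterate the doubling inequality to convert the doubling exponent $D$ into a lower bound on relative volumes: for concentric intrinsic-metric balls $B^i_r(\xi) \subset B^i_R(\xi)$ with $0 < r \leq R$, one has
\[
\mu_{\bb}(B^i_r(\xi)) \;\geq\; c\left(\frac{r}{R}\right)^{D}\mu_{\bb}(B^i_R(\xi)),
\]
which plays the role of a dimension hypothesis in the ensuing estimates. Second, apply the cutoff Poincar\'e inequality of Proposition~\ref{prop3.4} to a family of dyadic truncations $u_k$ of $|u|$ at levels $2^k$, using the Maz'ya/Bombieri-De Giorgi-Giusti identity that relates the Dirichlet energies of the $u_k$, together with those of the cutoff $\phi$ attached to $B^i_r(\xi)$, back to the single quantity $Q_{B^i_r(\xi)}(u,u)$. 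Third, sum the resulting dyadic estimates using Chebyshev's inequality and the volume bound from step one; this produces the $L^{2D/(D-2)}$ norm on the left of \eqref{SobIneq} with the correct geometric constant $r^{2}/\mu_{\bb}(B^i_r(\xi))^{2/D}$. The extra term $\frac{1}{r^2}(u,u)_{\bb}$ on the right appears because the Poincar\'e inequality only controls the oscillation of $u$ about its weighted mean on each ball, so the means themselves must be absorbed into an $L^2$ term at the ambient scale $r$.

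The main obstacle is bookkeeping rather than conceptual: one must carry the cutoff $\phi$ from Proposition~\ref{prop3.4} through each truncation level and through the dyadic recombination, checking that the constants depend only on the doubling constant from Proposition~\ref{prop3.1.0004} and the Poincar\'e constant from Theorem~\ref{prop3.2.003}. A secondary point, already flagged in the remark after Proposition~\ref{prop3.1.0004}, is that $d\mu_{\bb}$ is not Ahlfors regular, so the exponent $D$ here is only a doubling exponent and not an intrinsic dimension of $S_{n,m}$. This is precisely why the Saloff-Coste / Sturm abstract form of the Sobolev inequality---which requires \emph{only} doubling and Poincar\'e---must be used, rather than any of the classical forms that presuppose Ahlfors regularity. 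With this abstract framework identified and the two hypotheses verified, I would simply invoke the argument of Section~5.3 of~\cite{SaloffCosteLMS}, in the variant adapted to strongly local Dirichlet forms in~\cite{Sturm3}, rather than reproduce the standard dyadic details.
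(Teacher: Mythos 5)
Your proposal is correct and follows essentially the same route as the paper, which states this Sobolev inequality as a direct consequence of the doubling property and the scale-invariant Poincar\'e inequality by invoking Theorem 2.6 of~\cite{Sturm3} (Sturm's adaptation of the Saloff-Coste argument), offering no independent proof. Your additional sketch of the dyadic-truncation mechanism and the remark on the lack of Ahlfors regularity are consistent with the paper's own remarks and do not change the argument.
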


\begin{remark} These proofs of the doubling property and scale-invariant $L^2$
  Poincar\'e inequality readily adapt to allow the replacement of the class of
  measures $\{d\mu_{\bb}\}$ by the slightly more general class of measures of
  the form $\{e^{U(w;y)}d\mu_{\bb}(w;y)\}$, where $U(w;y)$ is a bounded
  $\cC^0$-function that is constant outside a compact set. This vindicates our
  claim, made after~\eqref{eqn90.006}, that the set of measures and quadratic
  forms to which our analysis applies is invariant under (bounded) dilations.
\end{remark}

\section{From Dirichlet Forms to Operators}\label{sect4}
The symmetric Dirichlet form $Q_B(u,v),$ with core $\cC^1(\overline{B})$
introduced above defines an unbounded self-adjoint operator, $L_Q,$ acting on a
dense domain $\cD(L_{Q})\subset L^2(B;d\mu_{\bb}).$ There are two features in
the definition of $\cD(L_{Q})$: first, the formal symbol of the operator, and
second, the natural boundary condition. The domain of the Dirichlet form
$\cD(Q)$ is the graph closure of $\cC^1(\overline{B})$ with respect to the norm
\begin{equation}
  |u|_Q^2=\|u\|^2_{L^2(B;d\mu_{\bb})}+Q_B(u,u).
\end{equation}
The domain of the operator is defined by the condition: $u\in \cD(L_Q),$ if
there is a constant $C$ so that 
\begin{equation}\label{eqn169.006}
  |Q_B(v,u)|\leq C\|v\|^2_{L^2(B;d\mu_{\bb})} \quad \mbox{for any} \quad v\in \cD(Q). 
\end{equation}
By the Riesz representation theorem there is a unique element $w\in
L^2(B;d\mu_{\bb})$ so that
\begin{equation}
  Q_B(v,u)=-( v,w)_{\bb,B}.
\end{equation}
We define $L_Qu=w.$ 

\subsection{The Second Order Operator}\label{ssec4.1.006}
By considering smooth functions in the form domain we can use the condition
in~\eqref{eqn169.006} to derive the formal symbol of the operator associated to
the symmetric Dirichlet form $Q$, along with the boundary conditions that must
be satisfied by smooth elements of $\cD(L_Q).$ These conditions are then
satisfied in a distributional sense by all elements of the operator domain.
As it fits better with our earlier work, we derive these formul{\ae} in the $(x;y)$-variables. 

After some calculation, the integration by parts gives that
\begin{multline}
  L_Qu=\Bigg[ \sum_{j=1}^n[x_j\pa_{x_j}^2u+b_j(x;y)\pa_{x_j}u]+
\sum_{i,j=1}^n\pa_{x_j}\sqrt{x_ix_j}a_{ij}\pa_{x_j}u+\\
\frac{1}{2}\sum_{i=1}^n\sum_{l=1}^m[\pa_{y_l}\sqrt{x_j}c_{jl}\pa_{x_j}u+\pa_{x_j}\sqrt{x_j}c_{jl}\pa_{y_l}u]+
\sum_{k,l=1}^m\pa_{y_m}d_{lm}\pa_{y_l}u+Vu\Bigg],
\label{lq1}
\end{multline}
where $V$ is a vector field with possibly slightly singular coefficients. For
each $j$ the formal Neumann-type boundary condition along $\pa S_{n,m}\cap
\overline{B}$ is given by
\begin{equation}
  \lim_{x_j\to 0^+}\left[x_j^{b_j}\pa_{x_j}u+x_j^{b_j-\frac 12}\left(
\sum_{i=1}^na_{ij}\sqrt{x_i}\pa_{x_i}u+\sum_{l=1}^mc_{jl}\pa_{y_l}u\right)\right]=0.
\end{equation}

To make $V$ more explicit, set 
\begin{equation}
W_{\bb}=\prod_{j=1}^nx_j^{b_j(x;y)-1}.
\end{equation}
Then 
\begin{multline}
V=\sum_{i,j=1}^n[\log x_i\pa_{x_j}b_i+\sqrt{\frac{x_i}{x_j}}a_{ji}(W_{\bb}^{-1}\pa_{x_i}W_{\bb})]x_j\pa_{x_j}
+ \sum_{k,l=1}^md_{lm}(W_{\bb}^{-1}\pa_{y_m}W_{\bb})\pa_{y_l}+\\
\frac{1}{2}\sum_{j=1}^n\sum_{l=1}^mc_{jl}[(W_{\bb}^{-1}\pa_{y_l}W_{\bb})\sqrt{x_j}\pa_{x_j}u+\sqrt{x_j}(W_{\bb}^{-1}\pa_{x_j}W_{\bb})\pa_{y_l}],
\label{lV1}
\end{multline}
where
\begin{equation}
  \begin{split}
    W_{\bb}^{-1}\pa_{x_j}W_{\bb}&=\frac{b_j-1}{x_j}+\sum_{i=1}^n\log
    x_i\pa_{x_j}b_i\\
W_{\bb}^{-1}\pa_{y_l}W_{\bb}&=\sum_{i=1}^n\log x_i\pa_{y_l}b_i.
  \end{split}
\end{equation}
A typical assumption in population genetics is that the coefficients $\{a_{ij},c_{jl}\}$ in \eqref{lq1} can be written
as $a_{ij}=\sqrt{x_ix_j}\alpha_{ij}(x;y),$ and $c_{jl}=\sqrt{x_j}\gamma_{jl}(x;y),$ where $\{\alpha_{ij},\gamma_{jl}\}$ are 
smooth functions of $(x;y)$. Thus
\begin{equation}\label{eqn176.005}
V=\sum_{j=1}^n\left[\beta^0_j+\sum_{i=1}^n\log x_i\beta^1_{ij}\right]x_j\pa_{x_j}+
\sum_{l=1}^m\left[\epsilon^0_j+\sum_{i=1}^n\log x_i\epsilon^1_{il}\right]\pa_{y_l},
\end{equation}
where $\{\beta^0_j,\beta^1_{ij},\epsilon^0_{l},\epsilon^1_{il}\}$ are smooth in $(x;y)$, so in this case $V$ is 
tangent to $\pa S_{n,m},$ but has slightly singular coefficients. Under this
hypothesis, the boundary condition along $\pa S_{n,m}\cap\overline{B}$ becomes
\begin{equation}
  \lim_{x_j\to 0^+}x_j^{b_j}\left[\pa_{x_j}u+\left(
\sum_{i=1}^n\alpha_{ij}x_i\pa_{x_i}u+\sum_{l=1}^m\gamma_{jl}\pa_{y_l}u\right)\right]=0,
\end{equation}
which is certainly satisfied if $u$ is in $\cC^1(S_{n,m}).$ Indeed, it is a
simple to show that a function $u\in\cC^2(S_{n,m})$ whose derivatives decay
rapidly enough belongs to $\Dom(L_{Q}).$

The log terms in these coefficients do not appear, at least to leading order,
if the derivatives of the weights $\{b_i(x;y)\}$ vanish along the boundary,
e.g. if these functions are constant and their gradients vanish on $\pa
S_{n,m}$. If that is the case, then 
\begin{equation}
V=\sum_{i,j=1}^n\alpha_{ij}(b_i-1)x_j\pa_{x_j}+ \sum_{l=1}^m\sum_{j=1}^n\gamma_{jl}(b_j-1)\pa_{y_l}.
\end{equation}

We now discuss two possible modifications to the form of this second order
operator which may be directly handled by our methods.  The first is to replace the
measure $d\mu_{\bb}$ by a multiple
 \begin{equation}
d\mu_{\bb,U}(x;y)=e^{U(x;y)}d\mu_{\bb}(x;y),
 \end{equation}
where, for example, $U$ is $\cC^1$ (as a function of $(x;y)$) and is constant outside of
a compact set. The extra terms coming from this factor in the integration by parts leads 
to an additional  ``conservative'' tangent vector field 
\begin{multline}
  V_U=\sum_{j=1}^n(\pa_{x_j}U)x_j\pa_{x_j}+\sum_{i,j=1}^n\sqrt{x_ix_j}a_{ij}\pa_{x_j}U\pa_{x_i}+\\
\frac{1}{2}\sum_{l=1}^m\sum_{j=1}^n\sqrt{x_j}c_{jl}\left[\pa_{y_l}U\pa_{x_j}+\pa_{x_j}U\pa_{y_l}\right]+
\sum_{k,l=1}^md_{lm}\pa_{y_{m}}U\pa_{y_l}
\label{lVU1}
\end{multline}
The associated second order operator is denoted $L_{Q,U}$. 
It is quite straightforward to incorporate such a factor into all of the arguments above and below. 

\subsection{Non-self Adjoint Perturbations}
A general Kimura operator $L$ may deviate from  
the operator $L_{Q,U}$ defined by the symmetric Dirichlet form~\eqref{eqn1.009}, with
the modified measure $d\mu_{\bb,U}$, by a first order term.
Indeed, it is typically impossible to write $L$ as in \eqref{lq1}, with $V$ a sum of
two terms \eqref{lV1} and \eqref{lVU1}.  To accommodate this, we use the formalism of 
non-symmetric Dirichlet forms. Many of the estimates proved in~\cite{SaloffCosteLMS} and~\cite{Sturm3} 
extend to operators defined in this way, and indeed~\cite{Sturm2} proves some of these.  In lieu 
of following Sturm's argument, we show that the proofs given in~\cite{SaloffCosteLMS} can be adapted 
to the present circumstance. 

To be more specific, 
consider a non self-adjoint operator 
\begin{equation}
L_{Q}^{X,c}=L_{Q,U}-V_X-c,
\end{equation}
where $V_X$ is a tangent vector field, with possibly singular coefficients and
$c$ is a measurable function. The tangent part of the vector field in $L_Q$ has
coefficients with log-singularities, and hence for $L_{Q}^{X,c}$
to be an arbitrary generalized Kimura diffusion operator, as defined
in~\cite{EpMaz2}, we must add a perturbation $V_X$ that also has 
log-singularities. Using a simple integration by
parts trick we are able to control such terms with mild singularities along the
boundary. To that end we prove the following lemma in Appendix~\ref{appC2}:
\begin{lemma}\label{lemB2.2.0101} Assume that $\bb=(b_1,\dots,b_n)$ are positive differentiable
  functions of $(\bx;\by),$ with $0<\beta_0<b_j,$ constant outside a compact set. Let $q$ be a
  measurable function defined on $S_{n,m}$ that satisfies
\begin{equation}\label{eqn220.012}
    |q(x;y)|\leq M\left[\chi_{B}(x;y)\sum_{j=1}^n|\log x_i|^k+1\right].
  \end{equation}
for some $k\in\bbN, 0<M,$ and  $B$ a bounded neighborhood of $(0;0),$ 
Given $\eta>0$ there  is a $C_{\eta}$ so that for any
$2\leq p,$ we have
\begin{equation}
  \int\limits_{S_{n,m}}|q(x;y)|u^p(x;y)d\mu_{\bb}\leq
 \eta\int\limits_{S_{n,m}}\langle A\nabla u^{\frac{p}{2}},\nabla
 u^{\frac{p}{2}}\rangle d\mu_{\bb}+
C_{\eta} \int\limits_{S_{n,m}}u^pd\mu_{\bb},
\end{equation}
for $u$ a bounded, compactly supported, non-negative function in $\Dom(Q).$
\end{lemma}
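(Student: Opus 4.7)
The plan is a one-dimensional integration by parts in each coordinate $x_j$, which trades a factor of $|\log x_j|$ for terms that are either of lower order in $\log$ or controlled by the Dirichlet form and the $L^p$-norm; then induct on the exponent $k$.

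\textbf{Reduction.} The constant $1$ in the bound \eqref{eqn220.012} contributes $M\int u^p\,d\mu_{\bb}$, which is absorbed into the $C_\eta\int u^p\,d\mu_{\bb}$ term. It therefore suffices to prove, for each fixed $j\in\{1,\dots,n\}$ and each $k\in\bbN$, the inequality
\[
I_k^{(j)}(u) := \int_{S_{n,m}} \chi_B\,|\log x_j|^k\,u^p\,d\mu_{\bb}
\;\leq\; \eta\int \langle A\nabla u^{p/2},\nabla u^{p/2}\rangle\,d\mu_{\bb}
+ C_\eta\int u^p\,d\mu_{\bb}.
\]
One may replace $\chi_B$ by a smooth nonnegative cutoff that majorizes it so integration by parts is permitted; the $y$-variables act as passive parameters.

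\textbf{One integration by parts.} Writing $d\mu_{\bb}=x_j^{b_j-1}\,d\sigma_j$ with $d\sigma_j = dx_j\otimes\prod_{i\neq j}x_i^{b_i-1}dx_i\,dy$, the identity
\[
b_j\,x_j^{b_j-1} \;=\; \partial_{x_j}(x_j^{b_j})\;-\;x_j^{b_j}\log x_j\;\partial_{x_j} b_j
\]
allows us to split $I_k^{(j)}$ into two integrals. The integral produced by the second summand carries an extra $x_j\,\log x_j$ and is, using that $|\partial_{x_j}b_j|$ is bounded (differentiability of $b_j$ plus its constancy outside a compact set), $1/b_j\leq 1/\beta_0$, and the boundedness of $x_j|\log x_j|^{k+1}$ on bounded sets, dominated by $C\int u^p\,d\mu_{\bb}$. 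In the integral coming from $\partial_{x_j}(x_j^{b_j})$, integrate by parts in $x_j$. The boundary contribution at $x_j = 0$ vanishes because $x_j^{b_j}|\log x_j|^k\to 0$ (since $b_j\geq\beta_0>0$ and $u$ is bounded); all other boundary contributions vanish by the compact support of $\chi_B$ and $u$. The derivative $\partial_{x_j}$ then hits four factors: $(a)$ the cutoff $\chi_B$, $(b)$ $|\log x_j|^k$, $(c)$ $1/b_j$, and $(d)$ $u^p$. Terms $(a)$ and $(c)$ are bounded by $C\int u^p\,d\mu_{\bb}$ since $\partial_{x_j}\chi_B$ and $\partial_{x_j}(1/b_j)$ are bounded while $x_j|\log x_j|^k$ is bounded on bounded sets. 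Term $(b)$ yields $(k/b_j)\,I_{k-1}^{(j)}(u)$, which will be treated by the inductive hypothesis. Term $(d)$ is the main contribution, which we denote $J$.

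\textbf{Handling the gradient term.} Since $u\geq 0$, the key identity
\[
p\,u^{p-1}\partial_{x_j}u = 2\,u^{p/2}\,\partial_{x_j}(u^{p/2})
\]
removes all $p$-dependence. Writing $x_j^{b_j}\,d\sigma_j = x_j\,d\mu_{\bb}$ and splitting $x_j = \sqrt{x_j}\cdot\sqrt{x_j}$, Cauchy--Schwarz gives
\[
|J|\leq \frac{2}{\beta_0}\left(\int \chi_B\,x_j|\log x_j|^{2k}\,u^p\,d\mu_{\bb}\right)^{1/2}
\left(\int \chi_B\,x_j|\partial_{x_j}(u^{p/2})|^2\,d\mu_{\bb}\right)^{1/2}.
\]
On the support of $\chi_B$, $x_j|\log x_j|^{2k}$ is bounded, so the first factor is $\leq C\bigl(\int u^p\,d\mu_{\bb}\bigr)^{1/2}$. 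The uniform ellipticity \eqref{eqn90.006} and the form \eqref{eqn68} of $Q$ imply $x_j|\partial_{x_j}v|^2\leq C\,\langle A\nabla v,\nabla v\rangle$, so the second factor is $\leq C\,Q(u^{p/2},u^{p/2})^{1/2}$. AM--GM yields
\[
|J|\leq \tfrac{\eta}{2}\int\langle A\nabla u^{p/2},\nabla u^{p/2}\rangle\,d\mu_{\bb} + C_\eta\int u^p\,d\mu_{\bb}.
\]

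\textbf{Induction and approximation.} Combining $(a)$--$(d)$ we obtain $I_k^{(j)}(u)\leq \eta\int\langle A\nabla u^{p/2},\nabla u^{p/2}\rangle\,d\mu_{\bb} + C_\eta\int u^p\,d\mu_{\bb} + \tilde C_k\,I_{k-1}^{(j)}(u)$; induction on $k$ (trivial base case $k=0$) gives the desired estimate. The integration by parts is justified when $u\in\CIc(S_{n,m})$, $u\geq 0$; the general case of a bounded, compactly supported $u\in\Dom(Q)$ follows by mollification in the interior, truncation, and monotone convergence, using closability of $Q$. The main obstacle is that $x_j|\log x_j|^k$ is merely \emph{bounded} (not small) on the relevant set, so the error term from Cauchy--Schwarz must be absorbed into the Dirichlet form via AM--GM rather than directly into the $L^p$-norm; keeping the absorption constant independent of $p$ depends on the identity $p\,u^{p-1}\partial u = 2\,u^{p/2}\partial(u^{p/2})$, and the bookkeeping of the several terms introduced by the $(x;y)$-dependence of $b_j$ must be done carefully at each induction step.
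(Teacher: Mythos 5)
Your integration by parts is missing a term, and it is precisely the term that makes this lemma delicate. The measure is $d\mu_{\bb}=\prod_{i=1}^n x_i^{b_i(x;y)-1}\,dx\,dy$ with \emph{every} weight $b_i$ depending on \emph{all} of the variables. When you integrate $\partial_{x_j}(x_j^{b_j})$ against $\chi_B\,|\log x_j|^k\,u^p\,b_j^{-1}\prod_{i\neq j}x_i^{b_i-1}$ and move the derivative across, $\partial_{x_j}$ does not only hit the four factors $(a)$--$(d)$ you list: it also hits $\prod_{i\neq j}x_i^{b_i(x;y)-1}$, producing
\begin{equation*}
\int \chi_B\,\frac{x_j|\log x_j|^k}{b_j}\,u^p\Big(\sum_{i\neq j}\log x_i\,\partial_{x_j}b_i\Big)\,d\mu_{\bb}.
\end{equation*}
The coefficient $x_j|\log x_j|^k\,|\partial_{x_j}b_i|/b_j$ is bounded on $B$ but not small, so this term is of size $C\sum_{i\neq j}I_1^{(i)}(u)$: logarithms in the \emph{other} coordinates reappear with exponent $1$. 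Your induction is on $k$ for a fixed $j$, and these new terms do not fit into it; worse, if you try to treat $I_1^{(i)}$ by the same integration by parts you regenerate $\sum_{l\neq i}I_1^{(l)}$ with a constant that is again only bounded, so the recursion does not close unless that constant is forced below $1/(n-1)$. That forcing requires an extra localization step: one must first restrict to a small enough neighborhood of $\partial S_{n,m}$ (where quantities like $x_i|\log x_i|$, or $|\log x_i|$ compared with $x_i^{-a}$, are genuinely small) and then patch with a partition of unity, using that $q$ is bounded away from the boundary. This is exactly how the paper's proof proceeds (Lemma~\ref{lemB2.01022} and the covering argument following it): the weights are frozen in the normal variable ($\ub_i=\min_{x_i}b_i$), the log is dominated by $\eta'\sum x_i^{-a}$ on a small $\delta$-neighborhood, and the same cross terms $\sum_{k\neq i}\partial_{x_i}\ub_k\log x_k$ that you dropped are absorbed there precisely because the neighborhood is small.

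The rest of your argument is sound and close in spirit to the paper's: the identity $b_jx_j^{b_j-1}=\partial_{x_j}(x_j^{b_j})-x_j^{b_j}\log x_j\,\partial_{x_j}b_j$, the vanishing of the boundary term at $x_j=0$ since $b_j\geq\beta_0>0$, the $p$-independent identity $p\,u^{p-1}\partial u=2u^{p/2}\partial(u^{p/2})$, the bound $x_j|\partial_{x_j}v|^2\leq C\langle A\nabla v,\nabla v\rangle$ from uniform ellipticity, and extracting the small factor $\eta$ by Cauchy--Schwarz plus AM--GM are all legitimate and parallel steps in the paper (which gains the factor $\sqrt{x_i}$ the same way). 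But as written the proof has a genuine gap: repair it either by inserting the boundary localization and absorption described above before you integrate by parts, or by adapting the paper's frozen-weight device so that no $x_j$-derivatives fall on the weights $b_i$, $i\neq j$, in the first place.
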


We suppose that $X$ is a
continuous $\bbR^{n+m}$-valued function in $\Int S_{n,m},$ with
$|X|^2_A=\langle AX,X\rangle$ satisfying an
estimate like in~\eqref{eqn220.012}. We define the non-symmetric Dirichlet
form
\begin{multline}\label{eqn194.006}
Q_B^X(u,v)=\int\limits_B\bigg\{\sum_{j=1}^n[x_i\pa_{x_i}uX_i+\sum_{i,j=1}^n+
x_ix_ja_{ij}(x;y)\pa_{x_i}u X_j+\\
\frac{1}{2}\sum_{i=1}^n\sum_{l=1}^mx_ic_{il}[\pa_{x_i}uX_l+\pa_{y_l}uX_i]+
\sum_{k,l=1}^md_{kl}(x;y)\pa_{y_k}u X_l\bigg\}v\times\\
x_1^{b_1(x;y)-1}\cdots x_n^{b_n(x;y)-1}dxdy;
\end{multline}
this represents the action of the vector field $V_X,$  which is continuous and tangent to the
boundary.

Representing the integrand in~\eqref{eqn194.006} as $\langle A\nabla u,Xv\rangle
d\mu_{\bb},$ and allowing also for a zeroth order term $cu,$ where $c$ is a
measurable, real valued function, satisfying an
estimate like that in~\eqref{eqn220.012},
we define 
\begin{equation}
\tQ^{X,c}_{U,B}(u,v)=\int\limits_{B}\left[\langle A\nabla u,\nabla
  v\rangle+\langle A\nabla u,Xv\rangle + c uv\right]
e^Ud\mu_{\bb}.
\end{equation}
For simplicity of notation, and because it provides no additional generality, we
shall omit the factor $e^U$ in the measure.

A ``sector condition'' holds for $\tQ^{X,c}_B$: there is a constant $C> 0$ so
that for any $u,v\in\cD(Q)$,
\begin{equation}\label{eqn118.010}
|\tQ^{X,c}(u,v))|\leq C\left(Q(u,u)+(u,u)_{\bb}\right)^{\frac 12}\left(Q(v,v)+(v,v)_{\bb}\right)^{\frac 12}.
\end{equation}
This is clear since the Cauchy-Schwarz inequality implies that
\begin{equation}
\begin{split}
|\tQ^{X,c}(u,v))|&=\left|\int\left[\langle A\nabla u,\nabla  v+vX\rangle + cuv\right]d\mu_{\bb}\right|\\
&\leq \sqrt{2}\left(Q(u,u)+(u,u)_{\bb}\right)^{\frac 12}(Q(v,v)+(\sigma v,v)_{\bb})^{\frac 12},
  \end{split}
\end{equation}
where $\sigma =|X|_A^2 + |c|^2.$ By Lemma~\ref{lemB2.2.0101},there is a $C'$
so that
\begin{equation}
 (\sigma v,v)_{\bb}\leq C'\left[Q(v,v)+(v,v)_{\bb}\right],
\end{equation}
which proves~\eqref{eqn118.010}.  From this it is immediate
that the form domains of  $\tQ^{X,c}$ and $Q$ agree. A function
$u\in\Dom(\tQ^{X,c})$ is in the domain of the  operator $L^{X,c}_Q$ if there is
a constant $C$ so that, for every $v\in \Dom(\tQ^{X,c})$, 
\begin{equation}
  |\tQ^{X,c}(u,v)|\leq C\|v\|_{\bb}.
\end{equation}
This implies, as before, that there is a unique element, $w\in L^2$ so that
\begin{equation}
  \tQ^{X,c}(u,v)=-(w,v)_{\bb};
\end{equation}
we then define $L^{X,c}_Qu=w.$

For the associated operator to satisfy the Markov property, and hence define contractions on 
$L^p$-spaces, (see Lemmas 1.4 and 1.5 in~\cite{Sturm2}) we would need to assume that
\begin{equation}\label{eqn116.0010}
  c-\frac{1}{2}\Div_{A,\bb}X\geq 0,
\end{equation}
where
\begin{multline}
  \Div_{A,\bb}X=\nabla_{x;y}\cdot(AX)+\\
\bx^{1-\bb}\left[\sum_{i,k}A_{ik}X_k\pa_{x_i}(\bx^{\bb-1})+
\sum_{l,k}A_{(l+n)k}X_k\pa_{y_l}(\bx^{\bb-1})\right].
\end{multline}
Writing out the second line in detail gives
\begin{equation}\label{eqn120.0010}
  \sum_{i,k}A_{ik}X_k\left[\frac{b_i-1}{x_i}+\sum_{s=1}^{n}\log
    x_s\pa_{x_i}b_s\right]+
\sum_{l,k}A_{(l+n)k}X_k\left[\sum_{s=1}^n\log x_s\pa_{y_l}b_s\right],
\end{equation}
where $1\leq i\leq n,$ $1\leq l\leq m$ and $1\leq k\leq m+n.$ For~\eqref{eqn116.0010} to 
hold with a bounded function $c,$ it is generally necessary that the following three conditions hold:
\begin{enumerate}
\item The weights must be constant along the appropriate boundary components.
\item The vector field $AX$ must be Lipschitz.
\item The coefficients $A_{ik}$ are (boundedly) divisible by $x_i.$
\end{enumerate}
For our applications, these hypotheses are unnatural, but fortunately they are actually not
necessary. Using the estimates that follow from Lemma~\ref{lemB2.2.0101}, with $p=2,$ we easily establish that there is a
constant $m$ so that a weak local solution, in $[0,T]\times B,$
to $u_t-L_Q^{X,c}u=0$ satisfies,
\begin{equation}\label{eqn135.014}
  \|u(t)\|_{L^2(B;d\mu_{\bb})}\leq e^{m t}\|u(0)\|_{L^2(B;d\mu_{\bb})}.
\end{equation}

The operator adjoint to the one defined by $\tQ^{X,c},$ with respect to  the $L^2(B;d\mu_{\bb})$-pairing, is 
\begin{equation}
\hL^{X,c}_Q=L_Q+V_X-\hc;
\end{equation}
this is defined by the Dirichlet form $\tQ^{-X,\hc}$, where
\begin{equation}\label{eqn122.0010}
  \hc=c-\Div_{A,\bb}X.
\end{equation}
With this representation for the adjoint, $ \hL^{X,c}_Q$ has an unbounded term of order zero, 
even if $c=0,$ unless the weights are constant. In 
the sequel we prove Harnack estimates for the operators $L_Q^{X,c}$ assuming
that $c$ is bounded and that $X$ satisfies an estimate like that
in~\eqref{eqn220.012}.  This enables us to prove the Harnack estimate and the
H\"older continuity for solutions to a generalized Kimura diffusion on a
compact manifold with corners, with initial data in $L^2.$ Since
Lemma~\ref{lemB1.010} holds for potentials with $\log$-singularities at the
boundary, we can  use the argument in~\cite{SaloffCosteLMS} to prove upper bounds for
the heat kernel in this more general case.

\subsection{Consequences of the Doubling Property and Poincar\'e Inequality}\label{sec4.2.001}
The hypothesis that the functions $b_i(x;y)$ and $U(x;y)$ are constant outside a compact set implies
that the doubling property and Poincar\'e inequality hold globally in $S_{n,m}.$ Our main intention, however, 
is to apply these results to solutions of the parabolic equation
\begin{equation}
  \pa_tu-Lu=0\text{ with } \left. u \right|_{t=0} = f
\end{equation}
on a compact manifold with corners $P$, where $L$ is a generalized Kimura diffusion operator. We thus
work in a boundary adapted coordinate system, and use the fact that these estimates 
hold for local solutions. Grigor'yan and Saloff-Coste, and in somewhat greater generality 
Sturm, show that the doubling property,~\eqref{eqn78.0003},  of the measure, and the scale-invariant 
$L^2$ Poincar\'e inequality,~\eqref{eqn179.005}, imply a range of properties of solutions to
both the parabolic and elliptic problems, including:  
\begin{enumerate}
\item Harnack inequalities for non-negative solutions.
\item H\"older continuity for weak solutions with initial data in
  $L^2(B;d\mu_{\bb}).$ 
\item Pointwise upper and lower bounds for the heat kernel itself.
\end{enumerate}

It is shown in~\cite{Sturm2} that if a Dirichlet form satisfies the hypotheses 
of uniform parabolicity (UP) and strong uniform parabolicity (SUP), as well as
the doubling property for $d\mu_{\bb}$ and the scale invariant $L^2$-Poincar\'e
inequality, established for $Q$ in Theorem~\ref{prop3.2.003}, then weak solutions
satisfy the conclusions of Lemma 1 in~\cite{Moser2}.  Sturm did not derive all
the conclusions that are available in the non-symmetric case, and the
verification of the SUP condition requires the assumption that the weights
$\{b_i(x;y)\}$ are constant and their gradients vanish at $\pa P$.  We therefore 
show directly that analogues of Theorems 5.2.9, 5.2.16 and Lemma 5.4.1 in~\cite{SaloffCosteLMS} 
hold for weak local solutions, i.e., $u\in \cD(Q_{B^i_r(x;y)})$ for which
\begin{equation}
  \tQ^{X,c}_{B^i_r(x;y)}(u,\varphi)=0
\end{equation}
for all $\varphi\in\cD(Q_{B^i_r(x;y)})\cap L^{\infty},$ with support in
$B^i_r(x;y).$ The proof of the Harnack inequality then follows, more or less
functorially, from the argument in~\cite{SaloffCosteLMS}, which employs the
lemma of Bombieri and Giusti (Lemma 2.2.6 in~\cite{SaloffCosteLMS}).  The
proofs of the necessary lemmas are given in Appendix~\ref{appC2}.  Here we simply
state the consequences of these estimates. The first and most important is a Harnack inequality for
local solutions.
\begin{theorem}Suppose that the functions $\{b_i(x;y):\:i=1,\dots,n\}$ defined
  in $\bbR_+^n\times\bbR^m$ are continuously differentiable functions of
  $(\bx;\by),$ bounded below by a positive constant and constant outside
  a compact set, and $X(w;y)$
  is a continuous $\bbR^{n+m}$-valued function, satisfying~\eqref{eqn220.012}
  for some $k\in\bbN,$ and $B$ a bounded set, which vanishes outside of a
  compact set, and $c(w;y)$ is a bounded measurable function supported in a
  compact set.  There is a constant $C$ so that for any $0<r,$ and
  $(x;y)\in\bbR_+^n\times\bbR^m,$ and $u$ a non-negative, weak solution to
\begin{equation}
  \pa_tu=(L_Q-V_X-c)u
\end{equation}
 in $W=(t-4r^2,t) \times
  B^i_{2r}(x;y),$ we have the following estimate:
\begin{equation}
  \sup\limits_{W^-} u\leq C\inf\limits_{W_+}u
\end{equation}
where 
\begin{equation}
  \begin{split}
    W^+&=\left(t-r^2,t\right)\times B^i_{r}(x;y)\\
 W^-&=\left(t-3r^2,t-2r^2\right)\times B^i_{r}(x;y).
  \end{split}
\end{equation}
\end{theorem}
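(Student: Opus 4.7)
The plan is to follow the Moser--Saloff-Coste iteration scheme, organized through the Bombieri--Giusti lemma, adapted to the weighted Dirichlet form setting developed in Sections~\ref{sec3} and~\ref{sect4}. The three ingredients already established---the doubling property (Proposition~\ref{prop3.1.0004}), the scale-invariant weighted Poincar\'e inequality in both its plain (Theorem~\ref{prop3.2.003}) and cutoff (Proposition~\ref{prop3.4}) forms, and the resulting Sobolev inequality \eqref{SobIneq}---constitute exactly the inputs needed.  The lower-order perturbation terms $V_X$ and $c$, with their possible logarithmic singularities, are controlled via the absorption Lemma~\ref{lemB2.2.0101}. The output will be a Harnack inequality of the form stated, with constant $C$ depending on the doubling and Poincar\'e constants, the bound $\beta_0$ on the weights, and the constants $M, k$ entering~\eqref{eqn220.012}.

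First I would derive the parabolic Caccioppoli estimate.  For a non-negative weak subsolution $u$ of $\pa_t u - L_Q^{X,c} u \leq 0$, a smooth space-time cutoff $\phi$ supported in a suitable sub-cylinder, and an exponent $p\geq 2$, the choice of test function $\varphi = \phi^2 u^{p-1}$ in the weak formulation yields, after the usual Cauchy--Schwarz splitting of the non-symmetric term $\langle A\nabla u, X\rangle$,
\begin{equation*}
\sup_s \int \phi^2 u^p\, d\mu_{\bb} + c_p \iint \phi^2 |\nabla u^{p/2}|_A^2\, d\mu_{\bb}\, ds \leq C \iint \bigl(|\pa_s \phi^2| + \langle A\nabla\phi,\nabla\phi\rangle + \phi^2(|X|_A^2 + |c|)\bigr) u^p\, d\mu_{\bb}\, ds.
\end{equation*}
Lemma~\ref{lemB2.2.0101} then absorbs the $|X|_A^2$ and $|c|$ contributions into the gradient term on the left, modulo a bounded $L^p$-tail.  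Combining this with the Sobolev inequality \eqref{SobIneq} and iterating over a geometric sequence of shrinking cylinders (in the Moser manner) gives the $L^p\to L^\infty$ mean-value estimate for subsolutions on $W^-$ (and, applied to $u^{-1}$, the analogous estimate for supersolutions on $W^+$), for any $p>0$ and indeed, after the standard argument, for any $p\neq 0$, including small negative exponents.

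Second, I would establish the log-oscillation estimate.  Testing the equation for a positive supersolution with $\phi^2/u$ yields an $L^2$-bound on $\phi\nabla\log u$, and the cutoff Poincar\'e inequality of Proposition~\ref{prop3.4} converts this into an exponential integrability (John--Nirenberg-type) statement for $\log u$ on parabolic cylinders, after a Steklov-type smoothing in time.  The Bombieri--Giusti criterion (Lemma 2.2.6 in~\cite{SaloffCosteLMS}) then packages the $L^p$ mean value bound on $W^-$, the $L^{-p}$ bound on $W^+$, and the log-oscillation bound into the Harnack comparison $\sup_{W^-} u \leq C \inf_{W^+} u$.

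The main obstacle is the interaction between the Moser iteration and the absorption of the singular lower-order terms.  A naive application of Lemma~\ref{lemB2.2.0101} at each iteration step would produce a constant $C_{\eta_k}$ that, without care, grows too rapidly in $k$ to be absorbed into the telescoping product $\prod_k C_{\eta_k}^{1/p_k}$.  The remedy is to fix a single small $\eta$ at the outset, independent of $p$, since the Lemma~\ref{lemB2.2.0101} bound $(|X|_A^2 v, v)_{\bb}\leq \eta\, Q(v,v) + C_\eta \|v\|_{\bb}^2$ holds for all $v=u^{p/2}$ in the form domain with the \emph{same} $C_\eta$; the exponent $p$ enters only through the constant $c_p$ on the gradient side, which is always of order $1/p$ and can be balanced.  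An analogous care is required in the log step, where one must apply Lemma~\ref{lemB2.2.0101} to $v = \phi\log u$ rather than a power of $u$, which is exactly why the cutoff-weighted version of the Poincar\'e inequality in Proposition~\ref{prop3.4} is essential rather than optional.  Once these absorption parameters are tuned, the rest of the argument proceeds in close parallel with~\cite[Sections~5.2--5.4]{SaloffCosteLMS}; the details, which are largely functorial, are relegated to Appendix~\ref{appC2}.
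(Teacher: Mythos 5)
Your proposal is correct and follows essentially the same route as the paper: the paper also establishes the $L^p$ mean-value bounds for subsolutions and for negative powers of supersolutions (Lemmas~\ref{lemB1.010} and~\ref{lemc2.01}, with the singular terms $|X|_A^2$ and $|c|$ absorbed via Lemma~\ref{lemB2.2.010} with constants uniform in $p$), plus the logarithmic estimate via the test function $\psi^2 u^{-1}$ and the weighted Poincar\'e inequality of Proposition~\ref{prop3.4} (Lemma~\ref{lemc3.01}), and then invokes the Bombieri--Giusti lemma exactly as in Moser and Saloff-Coste. Your remarks on tuning the absorption parameter $\eta$ independently of the iteration exponent, and on the necessity of the cutoff form of the Poincar\'e inequality in the log step, match the corresponding points in the paper's Appendix~\ref{appC2}.
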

\begin{proof}[Sketch of proof] Using Lemmas~\ref{lemB1.010},~\ref{lemc2.01}
  and~\ref{lemc3.01} we verify that the hypotheses of the lemma of Bombieri and
  Giusti (Lemma 2.2.6 in~\cite{SaloffCosteLMS}) are satisfied with
  $\alpha_0=\infty.$ The proof of the inequality then follows exactly as
  in~\cite{Moser2}, which is essentially identical to the argument used
  in~\cite{SaloffCosteLMS}. 
\end{proof}
As noted above this estimate has a wide range of consequences, among them the
H\"older continuity of solutions to the initial value problem for the parabolic
operator
$\pa_t-(L_Q-V_X)$ and upper and lower bounds on the heat kernel. We first state the
H\"older continuity result.
\begin{corollary}\label{cor5.1.003} If $\{b_i(x;y)\}$ are positive $\cC^1$-functions of
  $\{\bx,\by\},$ which are constant outside of compact set, and $X(w;y)$
  is a continuous $\bbR^{n+m}$-valued function, satisfying~\eqref{eqn220.012} for
  some $k\in\bbN,$ which vanishes outside of a compact set, then there exists a
  $\gamma>0$ and a constant $C$ such that, for all balls $B^i_{2r}(x;y)\subset
  S_{n,m}$ and all $t\in\bbR,$ if $u$ is a weak solution to the equation
  \begin{equation}
    \pa_tu-(L_Q-V_X)u=0
  \end{equation}
in the the set $W=(t-4r^2,t)\times B^i_{2r}(x;y),$ then for $(s_1,x_1;y_1),
(s_2,x_2;y_2)\in (t-r^2,t)\times B^i_r(x;y)$, 
\begin{equation}\label{eqn165.008}
  |u(s_1,x_1;y_1)-u(s_2,x_2;y_2)|\leq C\sup_{W}|u|
\left(\frac{|s_1-s_2|^{\frac 12}+\rho_i((x_1;y_1),(x_2;y_2))}{r}\right)^{\gamma}.
\end{equation}
\end{corollary}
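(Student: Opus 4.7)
The plan is to derive the Hölder estimate from the parabolic Harnack inequality of the preceding theorem by the classical oscillation-decay argument, exactly as in~\cite[Sect.~5.4]{SaloffCosteLMS}, the only non-trivial issue being that the relevant metric is the intrinsic metric $\rho_i$ on $S_{n,m}$ rather than a Euclidean one. Since the Harnack inequality proved above is already scale- and location-free in $\rho_i$, the standard argument carries over verbatim once it is set up.

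First I would introduce, for each $0<s\leq 2r$, the scaled parabolic cylinders
\begin{equation*}
W_s = (t-4s^2,t)\times B^i_{2s}(x;y),\quad W_s^+ = (t-s^2,t)\times B^i_s(x;y),\quad W_s^- = (t-3s^2,t-2s^2)\times B^i_s(x;y),
\end{equation*}
and set $M(s)=\sup_{W_s^+}u$, $m(s)=\inf_{W_s^+}u$, $\omega(s)=M(s)-m(s)$. The two nonnegative functions $M(2s)-u$ and $u-m(2s)$ are weak solutions of the same equation in $W_s$ (the equation being linear in $u$; the constant shifts are absorbed using that $c\equiv 0$ here, since the statement concerns the operator $L_Q-V_X$). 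Applying the Harnack inequality of the theorem to each of them on $W_s\supset W_s^{\pm}$ yields
\begin{equation*}
M(2s)-m(s)=\sup_{W_s^-}\bigl(M(2s)-u\bigr)\leq C\inf_{W_s^+}\bigl(M(2s)-u\bigr)=C\bigl(M(2s)-M(s)\bigr),
\end{equation*}
and symmetrically $M(s)-m(2s)\leq C(m(s)-m(2s))$. Adding these and rearranging gives the oscillation-decay inequality $\omega(s)\leq \theta\,\omega(2s)$ with $\theta=(C-1)/(C+1)<1$.

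Iterating the decay estimate produces $\omega(2^{-k}r)\leq \theta^k \omega(r)\leq \theta^k\cdot 2\sup_W|u|$ for all $k\geq 0$, which translates into $\omega(s)\leq C'(s/r)^{\gamma}\sup_W|u|$ with $\gamma=-\log_2\theta>0$. To obtain the two-point estimate~\eqref{eqn165.008}, given $(s_1,x_1;y_1),(s_2,x_2;y_2)\in W_r^+$, set $\delta=|s_1-s_2|^{1/2}+\rho_i((x_1;y_1),(x_2;y_2))$ and apply the oscillation bound on a parabolic cylinder of size $\sim\delta$ containing both points (translated in time so as to lie inside $W$); the doubling property and the fact that the Harnack inequality is uniform in the base-point ensure that finitely many overlapping such cylinders, joined by a Harnack chain, cover the segment between the two points if they happen to be farther apart than $r$.

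The step most likely to cause difficulty is the very first one: checking that $M(2s)-u$ and $u-m(2s)$ are honestly weak solutions in the Dirichlet-form sense, in a setting where $L_Q-V_X$ is non-symmetric and has mildly singular coefficients. This requires verifying that constant functions lie in the local form domain and that $Q^{-X,0}(\mathrm{const},v)=0$ for admissible test $v$; both follow from the explicit form~\eqref{eqn194.006} of $\tQ^{X,c}$ and from Lemma~\ref{lemB2.2.0101}, which controls the singular terms produced by the logarithmic coefficients. Once these preliminaries are in place, the remainder of the argument is a routine translation of~\cite[Thm.~5.4.7]{SaloffCosteLMS} into our metric-measure framework, with all the analytic ingredients provided by the Harnack inequality just established.
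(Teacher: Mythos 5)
Your proposal is correct and takes essentially the same route as the paper, which proves this corollary simply by invoking the oscillation-decay argument of Theorem 5.4.7 in Saloff-Coste applied to the parabolic Harnack inequality just established (including the same observation that one must take $c=0$ so that constants solve the equation). One minor bookkeeping point: $\sup_{W_s^-}(M(2s)-u)$ equals $M(2s)-\inf_{W_s^-}u$, not $M(2s)-m(s)$ (your $m(s)$ is an infimum over $W_s^+$), so the decay estimate should instead be obtained by adding the two Harnack bounds and using $\sup_{W_s^-}(M(2s)-u)+\sup_{W_s^-}(u-m(2s))\geq \omega(2s)$, which gives $\omega(s)\leq \frac{C-1}{C}\,\omega(2s)$ and the same Hölder conclusion.
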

\begin{remark} Note that we need to take $c=0,$ as the proof requires that
  constant functions be solutions of the parabolic equation.
\end{remark}

\noindent
The proof exactly follows the proof of Theorem 5.4.7 in~\cite{SaloffCosteLMS}.

This corollary has a very useful corollary itself, which gives the rate at which
the $\cC^{0,\gamma}_{\WF}$-norm of a solution with initial data in $\cC^0$ blows
up. 
\begin{corollary}\label{cor4.3.007}
  Suppose that $u$ is defined in $W=(0,t)\times B^i_{2}(0;0),$ with $t<\frac
  12,$ satisfies
  the estimate in~\eqref{eqn165.008}. There are constants
   $C, C'$ independent of $u$ and $t$ so that if $\sup_{(x;y,t)\in
    W}|u(x;y,t)|<M,$ then
  \begin{multline}\label{eqn144.008}
    |u(t,x_1;y_1)-u(t,x_2;y_2)|\leq\\
    M\left[C\left(\frac{\rho_i((x_1;y_1),(x_2;y_2))}{\sqrt{t}}\right)^{\gamma}+
C'\left(\frac{\rho_i((x_1;y_1),(x_2;y_2))}{\sqrt{t}}\right)\right],
  \end{multline}
for $(x_1;y_1),(x_2;y_2)\in B^i_1(0;0).$ 
\end{corollary}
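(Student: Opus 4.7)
The plan is to derive~\eqref{eqn144.008} directly from Corollary~\ref{cor5.1.003} by a simple case split on the spatial distance $d := \rho_i((x_1;y_1),(x_2;y_2))$. The key observation is that, in applying the parabolic H\"older bound~\eqref{eqn165.008}, the natural radius is $r = \sqrt{t}/2$, which exactly saturates the time constraint $4r^2 \leq t$ imposed by the domain $(0,t) \times B^i_2(0;0)$.

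With this choice, the parabolic cylinder $(t - 4r^2, t) \times B^i_{2r}(x_1;y_1) = (0,t) \times B^i_{\sqrt{t}}(x_1;y_1)$ is contained in $W$: indeed, $(x_1;y_1) \in B^i_1(0;0)$ and $1 + \sqrt{t} < 2$ since $t < 1/2$. When $d \leq r$, both spatial points lie in the smaller ball $B^i_r(x_1;y_1)$, and I apply~\eqref{eqn165.008} at the common time slice $t$, taking a one-sided limit from below to handle the open endpoint (with continuity up to $t$ supplied by the same estimate applied on slightly smaller subcylinders). This yields
\[
|u(t, x_1;y_1) - u(t, x_2;y_2)| \;\leq\; CM\,(d/r)^\gamma \;=\; 2^\gamma C M\,(d/\sqrt{t})^\gamma,
\]
which is precisely the first term on the right of~\eqref{eqn144.008}. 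In the complementary case $d > r = \sqrt{t}/2$, the ratio $d/\sqrt{t}$ exceeds $1/2$, so the trivial uniform bound $|u(t, x_1;y_1) - u(t, x_2;y_2)| \leq 2M$ can be rewritten as at most $4M \cdot d/\sqrt{t}$, which is absorbed into the second (linear) term of~\eqref{eqn144.008}. Combining the two cases produces the claimed estimate with constants $C_{\text{new}} = 2^\gamma C$ and $C' = 4$.

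I do not anticipate a substantial obstacle here; the only real content of the argument is the choice of scale $r = \sqrt{t}/2$, together with the elementary observation that at spatial separations larger than $\sqrt{t}$ the parabolic H\"older estimate conveys no useful information and one must revert to the uniform bound. This reversion is precisely what produces the linear correction term distinguishing~\eqref{eqn144.008} from a purely H\"older-type estimate. The mild technicality of evaluating $u$ at the top boundary time $t$, when $u$ is strictly speaking defined only on $(0,t)$, is handled by the H\"older continuity already furnished by Corollary~\ref{cor5.1.003}, which extends $u$ continuously to the closed endpoint.
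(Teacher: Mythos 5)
Your proof is correct, and it takes a genuinely different (and more elementary) route than the paper for the large-separation regime. For $\rho_i((x_1;y_1),(x_2;y_2))$ small relative to $\sqrt{t}$ both arguments are the same: apply~\eqref{eqn165.008} on a cylinder anchored at time $t$ whose scale is comparable to $\sqrt{t}$ (your choice $r=\sqrt{t}/2$ makes the containment in $(0,t)\times B^i_2(0;0)$ explicit, which the paper leaves implicit with its threshold $\sqrt{t/3}$), and your treatment of the open endpoint $s=t$ by continuity is exactly what the paper does tacitly. Where you diverge is the case $\rho_i>\sqrt{t}/2$: you simply observe that the ratio $\rho_i/\sqrt{t}$ is bounded below and absorb the trivial bound $2M$ into the linear term with $C'=4$, whereas the paper chains the H\"older estimate along a length-minimizing geodesic, splitting it into $N\leq 4\rho_i/\sqrt{t}$ segments of length $\sqrt{t/3}$, applying~\eqref{eqn165.008} to consecutive points, and summing with the elementary inequality $\sum a_j^{\gamma}\leq N^{1-\gamma}(\sum a_j)^{\gamma}$ and the telescoping of distances. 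Your version is shorter and does not use the length-space structure of $\rho_i$ (existence of minimizing geodesics); the paper's chaining produces the linear term intrinsically from the H\"older estimate, with a constant depending only on $C$ and $\gamma$ rather than on the crude $L^\infty$ difference bound, but since both terms in~\eqref{eqn144.008} carry the factor $M$ anyway, nothing quantitative is lost by your shortcut, and the consequences drawn in the subsequent remark (integrability of the $\cC^{0,\gamma}_{\WF}$-norm in $t$, hence compactness of the resolvent) follow equally from your constants.
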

\begin{remark} 
As noted after  hypothesis~\eqref{eqn90.006}, the conditions under which this corollary holds 
are dilation invariant. This result also shows that 
\begin{equation}
\int\limits_{0}^{\frac 12}\|u(\cdot,t)\|_{\WF,0,\gamma}dt<C \|u\|_{L^{\infty}},
\end{equation}
which in turn implies that if $L_Q-V_X$ is a Kimura diffusion operator on the compact manifold
with corners $P,$ then its graph closure on $\cC^0(P)$ has a compact resolvent. 
\end{remark}
\begin{proof} From~\eqref{eqn165.008} it follows that for points
$(x_1;y_1),(x_2;y_2)\in B^i_1(0;0)$ with
$\rho_i((x_1;y_1),(x_2;y_2))<\sqrt{t/3},$ we have
\begin{equation}
|u(t,x_1;y_1)-u(t,x_2;y_2)|\leq
MC\left(\frac{\rho_i((x_1;y_1),(x_2;y_2))}{\sqrt{t}}\right)^{\gamma}.
\label{estdif}
\end{equation}
If $\rho_i((x_1;y_1),(x_2;y_2))>\sqrt{t/3},$ then we can choose points $(x^{(j)};y^{(j)})$, $j=0,\dots,N,$ 
on a length minimizing geodesic joining   $(x^{(0)};y^{(0)})=(x_1;y_1),$ to the
point $(x^{(N)};y^{(N)})=(x_2;y_2),$ with
\begin{equation}
\rho_i((x^{(j)};y^{(j)}),(x^{(j+1)};y^{(j+1)}))=\sqrt{\frac{t}{3}}\text{ for }j=0,\dots,N-1,
\end{equation}
and 
\begin{equation}
  \rho_i((x^{(N-1)};y^{(N-1)}),(x^{(N)};y^{(N)}))\leq\sqrt{\frac{t}{3}}.
\end{equation}
Clearly
\begin{equation}
  N\leq \frac{4\rho_i((x_1;y_1),(x_2;y_2))}{\sqrt{t}}.
\label{estN}
\end{equation}
Applying \eqref{estdif} for a sequence of points along the straight line geodesic from $(x_1;y_1)$ to $(x_2;y_2)$ gives
\begin{equation}
\begin{split}
   |u(t,x_1;y_1)-u(t,x_2;y_2)|&\leq
    MC\sum_{j=0}^{N-1}\left(\frac{\rho_i((x^{(j)};y^{(j)}),(x^{(j+1)};y^{(j+1)}))}{\sqrt{t}}\right)^{\gamma}\\
&\leq
    MCN^{1-\gamma}
\left(\sum_{j=0}^{N-1}\frac{\rho_i((x^{(j)};y^{(j)}),(x^{(j+1)};y^{(j+1)}))}{\sqrt{t}}\right)^{\gamma}.
\end{split}
\end{equation}
Then \eqref{estN} and the fact that the points $\{(x^{(j)};y^{(j)})\}$ lie along a length minimizing geodesic, 
so the sum of the $\rho_i$ distances between them telescope, shows that 
\begin{equation}
   |u(t,x_1;y_1)-u(t,x_2;y_2)|\leq MC'\left(\frac{\rho_i((x_1;y_1),(x_2;y_2))}{\sqrt{t}}\right);
\end{equation}
this completes the proof of the corollary.
\end{proof}

The final corollaries are upper and lower bounds for the heat kernel itself.
The upper bound holds for the general class of operators $L_Q-V_X-c$ we have
been considering, provided that the adjoint operator,
$\hL^{-X,\hc}_Q=L_Q+V_X-\hc,$ where $\hc$ is given by~\eqref{eqn122.0010}, is
an operator of the same type. The lower bounds only apply to the self adjoint
case.  The solution operator for the heat equation defines a semigroup
$f\mapsto T_tf,$ which is represented by a kernel function
\begin{equation}
  T_tf(x;y)=\int\limits_{S_{n,m}}p_t((x;y),(\tx;\ty))f(\tx;\ty)d\mu_{\bb}(\tx;\ty).
\end{equation}
As shown in Lemma 1.5 and Proposition 2.3 of~\cite{Sturm2}, the operator
\begin{equation}
  S_tf(\tx;\ty)=\int\limits_{S_{n,m}}p_t((x;y),(\tx;\ty))f(x;y)d\mu_{\bb}(x;y)
\end{equation}
gives the semigroup for adjoint operator $\hL^{-X,\hc}_Q.$
\begin{remark} Notice that the inclusion of the weight defining the measure 
  gives a kernel of the form
  \begin{equation}
    p_t((x;y),(\tx;\ty))\tx_{1}^{b_1(\tx;\ty)-1}\cdots \tx_{n}^{b_n(\tx;\ty)-1},
  \end{equation}
which exactly mirrors the kernels that arise in the model case. The upper bound
in~\eqref{eqn207.005} shows that the principal singularity of the heat kernel at
the incoming face is no worse than that defined by the weight function,
$\bx^{\bb-1}.$ For the self adjoint case, the lower bound~\eqref{eqn148.012}
shows that this precisely captures the leading singularity.
\end{remark}
The following estimates contain the doubling constant $D$ from~\eqref{eqn78.0003}.
The proofs of Theorem 5.2.10 and Corollary 5.2.11 in~\cite{SaloffCosteLMS} give the upper bound:
\begin{corollary}\label{cor5.2} Assume that the $\{b_i\}$ are positive $\cC^1$ functions of
  $\{\bx, \by\},$ which are constant outside a bounded set, $X(x;y)$ is a
  $\cC^1$-function, satisfying~\eqref{eqn220.012} for some $k\in\bbN,$
  vanishing outside a compact set; and $c$ is a function also
  satisfying~\eqref{eqn220.012}, for some $k'\in\bbN,$ and vanishing outside a
  compact set. For any $0<\eta$ there are constants $C_0,C_1$ so that, for all
  $t>0$ and pairs $(x;y),(\tx;\ty)\in S_{n,m},$ we have
\begin{multline}\label{eqn207.005}
 p_t((x;y),(\tx;\ty))\leq \\ \frac{C_0\exp\left(-\frac{\rho_i^2((x;y),(\tx;\ty))}{4(1+\eta)t}\right)}
{\sqrt{\mu_{\bb}(B^i_{\sqrt{t}}(x;y))\mu_{\bb}(B^i_{\sqrt{t}}(\tx;\ty))}}\times
\left(1+\frac{\rho_i((x;y),(\tx;\ty))}{\sqrt{t}}\right)^{D}
\cdot\exp(C_1t).
\end{multline}
If $X=c=0,$ then we can take $C_1=\eta=0$ in this estimate.
\end{corollary}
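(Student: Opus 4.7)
The plan is to follow the route of Theorem 5.2.10 and Corollary 5.2.11 of~\cite{SaloffCosteLMS}, adapted to the non-symmetric setting of $L_Q - V_X - c$, with the logarithmic singularities in $V_X$ and $c$ controlled by Lemma~\ref{lemB2.2.0101}. All of the underlying ingredients are in place: the doubling property (Proposition~\ref{prop3.1.0004}), the scale-invariant Poincar\'e inequality (Theorem~\ref{prop3.2.003}), the Sobolev inequality~\eqref{SobIneq}, and the $L^2$ growth estimate~\eqref{eqn135.014}.

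The first step is an on-diagonal bound of the form
\begin{equation*}
p_t((x;y),(x;y)) \leq \frac{C_0 e^{C_1 t}}{\mu_{\bb}(B^i_{\sqrt t}(x;y))},
\end{equation*}
obtained by Moser iteration. One tests the equation $\partial_t u = (L_Q - V_X - c)u$ against $u^{p-1}$; the singular contributions of $|X|^2_A$ and $|c|$ that arise in the resulting energy identity are absorbed by invoking Lemma~\ref{lemB2.2.0101} (with $p$ in place of $2$) to dominate them by $\eta\,Q(u^{p/2},u^{p/2}) + C_\eta\|u^{p/2}\|^2_{L^2}$. The Sobolev inequality then yields a differential inequality for $\|u(t)\|_{L^p}$ whose iteration in $p$ produces an $L^1\!\to\! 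L^\infty$ estimate for the semigroup, and passage to the kernel gives the stated on-diagonal bound. The factor $e^{C_1 t}$ accounts for the $L^2$ growth rate in~\eqref{eqn135.014} together with the constants from Lemma~\ref{lemB2.2.0101}; when $X=c=0$ this absorption is unnecessary and one may take $C_1=0$.

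The second step upgrades this to a Gaussian off-diagonal estimate by the Davies perturbation method. For a bounded Lipschitz $\psi$ with $\langle A\nabla\psi,\nabla\psi\rangle \leq 1$ and a parameter $\alpha\in\mathbb R$, one considers the conjugated semigroup $T^{\alpha\psi}_t := e^{-\alpha\psi}T_t e^{\alpha\psi}$. Its generator differs from $L_Q - V_X - c$ by first-order and zeroth-order terms of size $O(\alpha)$ and $O(\alpha^2)$; repeating Step~1 for this generator (again using Lemma~\ref{lemB2.2.0101} to tame the perturbation) yields
\begin{equation*}
p_t((x;y),(\tx;\ty))
\leq \frac{C_0\,\exp\!\bigl(C_1 t + C\alpha^2 t - \alpha[\psi(\tx;\ty)-\psi(x;y)]\bigr)}
{\sqrt{\mu_{\bb}(B^i_{\sqrt t}(x;y))\,\mu_{\bb}(B^i_{\sqrt t}(\tx;\ty))}}.
\end{equation*}
Choosing $\psi(\cdot) = \rho_i((x;y),\cdot)$, which is $1$-Lipschitz for the intrinsic metric, and optimizing $\alpha$ produces the Gaussian factor $\exp(-\rho_i^2/(4(1+\eta)t))$ for any preassigned $\eta>0$; the $\eta$ absorbs the non-sharpness introduced by the constant $C$ in the Davies absorption, so the constant $1/4$ is recovered only in the limit $\eta\downarrow 0$. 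Finally, the polynomial correction $(1+\rho_i/\sqrt t)^D$ is obtained by chaining the doubling estimate of Proposition~\ref{prop3.1.0004} along a sequence of intrinsic balls of radius $\sqrt t$ joining $(x;y)$ to $(\tx;\ty)$, comparing $\mu_{\bb}(B^i_{\sqrt t}(x;y))$ with $\mu_{\bb}(B^i_{\sqrt t}(\tx;\ty))$.

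The single serious obstacle is the log-singular nature of $V_X$ and $c$. Without Lemma~\ref{lemB2.2.0101} the Moser iteration of Step~1, and equally the twisted version used in Step~2, would fail, because the natural potential-type terms that appear after integration by parts are unbounded. The lemma replaces boundedness by a form-relative smallness bound, which is exactly the structural ingredient that Saloff-Coste's argument requires. Once this is in hand, the remainder of the proof proceeds verbatim as in~\cite{SaloffCosteLMS}, with the various multiplicative constants produced by the lemma accumulating into the single exponential factor $\exp(C_1 t)$.
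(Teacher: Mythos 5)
Your outline follows the same route as the paper: the Moser sub-solution bound of Lemma~\ref{lemB1.010} (applied with $p=1$) for the parabolic mean-value inequality, the Davies perturbation $T_t^{\alpha,\phi}=e^{-\alpha\phi}T_te^{\alpha\phi}$ with a twisted $L^2\to L^2$ bound of the form $e^{((1+\epsilon)\alpha^2+C_\epsilon)t}$, and Lemma~\ref{lemB2.2.0101} to absorb the logarithmic singularities of $X$ and $c$ into form-relative small terms; the doubling factor from Proposition~\ref{prop3.1.0004} produces the polynomial correction $(1+\rho_i/\sqrt t)^D$. This is precisely how the paper adapts Corollary 5.2.11 of Saloff-Coste, with the $\exp(C_1 t)$ coming from the constant $C_\epsilon$ in~\eqref{eqn147.012}.

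There is, however, one ingredient the paper singles out which your write-up passes over, and which cannot be skipped when $X\neq 0$. The Saloff-Coste factorization controls $p_t$ through factors equivalent to $\|T_{t/3}\|_{2\to\infty}$, $\|T^{\alpha,\phi}_{t/3}\|_{2\to 2}$, and $\|T_{t/3}\|_{1\to 2}$, and by duality the last one is $\|T_{t/3}^{*}\|_{2\to\infty}$. The adjoint semigroup $T_t^{*}$ has generator $\hL^{X,c}_Q=L_Q+V_X-\hc$ with $\hc=c-\Div_{A,\bb}X$ given in~\eqref{eqn122.0010}, and when the weights $b_i$ are non-constant the divergence term $\Div_{A,\bb}X$ (see~\eqref{eqn120.0010}) contributes additional logarithmically singular terms. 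Thus $\hc$ is \emph{not} automatically bounded even if $c$ is; one must check, as the paper does, that $\hc$ still satisfies the estimate~\eqref{eqn220.012} for some $k''\in\bbN$, so that Lemma~\ref{lemB1.010} applies also to weak solutions of $(\pa_t-\hL^{X,c}_Q)u=0$ and provides the $p=1$ mean-value bound for the adjoint. Without verifying that the adjoint stays inside the admissible class, the $L^1\to L^2$ (equivalently, adjoint $L^2\to L^\infty$) factor is not controlled, and the argument is incomplete. Adding this observation to your Step 1 (i.e., prove~\eqref{eqn245.009} with $p=1$ for \emph{both} the forward and adjoint operators) closes the gap; the rest of your sketch lines up with the paper.
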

\begin{proof} The proof given in~\cite{SaloffCosteLMS} for Corollary 5.2.11
  applies with several modifications. If $X=0,$ then
the kernel function is symmetric and, for $t>0$ defines a weak solution to
$(\pa_t-L_Q^c)u=0,$ in both the $(t,x;y)$ and $(t,\tx;\ty)$ variables. We can
therefore apply the estimates in~\eqref{eqn245.009} with $p=1$ in both sets of
variables. 

If $X$ is not zero, then the kernel weakly satisfies the equations
\begin{equation}
  \begin{split}
    &(\pa_t-L^{X,c}_{Q,(x;y)})p_t=0\\
&(\pa_t-\hL^{X,c}_{Q,(\tx;\ty)})p_t=0.
  \end{split}
\end{equation}
The vector field $X$ and potential $c$ are allowed to have log-singularities along the
boundary. The function $\hc$ defined in~\eqref{eqn122.0010}  also
satisfies the estimate in~\eqref{eqn220.012}, with some $k''\in\bbN.$ The adjoint
operator is therefore defined by a Dirichlet form satisfying the
hypotheses of Lemma~\ref{lemB1.010}, and therefore weak solutions of
$(\pa_t-\hL^{X,c}_Q)u=0$ also satisfy the estimates in~\eqref{eqn245.009} with $p=1.$

Instead of the estimate~\cite[Lemma 4.2.1]{SaloffCosteLMS}
  for the $L^2$ operator norm for $T_t^{\alpha,\phi}=e^{-\alpha\phi}T_te^{\alpha\phi}$,
  we have, for any $\epsilon>0,$ that
  \begin{equation}\label{eqn147.012}
    \|T_t^{\alpha,\phi}\|_{2\to 2}\leq e^{((1+\epsilon)\alpha^2 +C_{\epsilon})t},
  \end{equation}
for a constant $C_{\epsilon}.$  To see this we observe that if
$u(t)=T_t^{\alpha,\phi}f,$ then
\begin{multline}
  \pa_t\|u(t)\|_{\bb}^2=\\
-\int\limits_{S_{n,m}}\left[\langle A\nabla u,\nabla u\rangle+\langle A\nabla
  u,X u\rangle+
(\alpha\langle A\nabla \phi,X\rangle-\alpha^2\langle A\nabla \phi,\nabla \phi\rangle+c)u^2\right]d\mu_{\bb}.
\end{multline}
Using the Cauchy-Schwarz and arithmetic-geometric mean inequalities we see that
\begin{multline}
  \pa_t\|u(t)\|_{\bb}^2\leq 
-\frac{1}{2}\int\limits_{S_{n,m}}\langle A\nabla u,\nabla u\rangle d\mu_{\bb}+
\alpha^2\int\limits_{S_{n,m}}\langle A\nabla \phi,\nabla \phi\rangle
u^2d\mu_{\bb}+\\
\int\limits_{S_{n,m}}\left[|\alpha||X|_Au^2+\frac{1}{2}\langle AX,X\rangle+|c|)u^2\right]d\mu_{\bb}.
\end{multline}
Applying Lemma~\ref{lemB2.2.0101} we easily show that for any $\epsilon>0$
there is a constant $C_{\epsilon}$ so that
\begin{equation}
   \pa_t\|u(t)\|_{\bb}^2\leq [(1+\epsilon)\alpha^2+C_{\epsilon}]\|u(t)\|_{\bb},
\end{equation}
thus verifying~\eqref{eqn147.012}. If $X=c=0,$ then clearly we can take
$\epsilon=0,$ and $C_{\epsilon}=0.$
Apart from these modifications, the proof works exactly as in Saloff-Coste.
\end{proof}

The lower bound, which follows from Corollary 4.10 in~\cite{Sturm3} is somewhat
less general.
\begin{corollary}\label{cor5.3}
  Suppose that $\{b_i\}$ are positive $\cC^1$ functions of $\{\sqrt{\bx},
  \by\},$ which are constant outside a bounded set, and $L_Q$ is a generalized
  Kimura diffusion operator defined by a symmetric Dirichlet form satisfying the
  hypotheses above. If $p_t$ denotes the heat kernel for $\pa_t-L_Q,$ then
  there is a constant $C$ so that, for all $t>0$ and $(x;y),(\tx;\ty)\in
  S_{n,m}$ we have
  \begin{equation}\label{eqn148.012}
    p_t((x;y),(\tx;\ty))\geq
    \frac{\exp\left(-C\frac{\rho_i^2((x;y),(\tx;\ty))}{t}\right)}
{C\mu_{\bb}(B^i_{\sqrt{t}}(x;y))}.
  \end{equation}
\end{corollary}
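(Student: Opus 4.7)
The plan is to invoke the general framework of Sturm (specifically the machinery culminating in \cite{Sturm3}, Corollary 4.10), whose hypotheses have now been verified in our setting: the measure $d\mu_{\bb}$ is doubling (Proposition~\ref{prop3.1.0004}), the symmetric Dirichlet form $Q$ satisfies the scale-invariant $L^2$-Poincar\'e inequality (Theorem~\ref{prop3.2.003}), and the intrinsic metric $\rho_i$ is compatible with the topology and induces the correct Lipschitz class. In the self-adjoint case ($X=c=0$) the heat kernel $p_t$ is symmetric and non-negative, and the parabolic Harnack inequality holds for the operator $\pa_t - L_Q$. These are exactly the structural inputs needed.

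The argument proceeds in two stages. First I would establish a \emph{near-diagonal} lower bound: there exist constants $c_0,\kappa>0$ such that
\begin{equation}
p_t((x;y),(\tx;\ty)) \geq \frac{c_0}{\mu_{\bb}(B^i_{\sqrt{t}}(x;y))} \qquad \text{whenever } \rho_i((x;y),(\tx;\ty))\leq \kappa\sqrt{t}.
\end{equation}
This follows from the parabolic Harnack inequality of the previous subsection applied to the space-time cylinder $(t/2,\,2t)\times B^i_{2\kappa\sqrt{t}}(x;y)$, together with the fact that, by symmetry and semigroup, $p_t((x;y),(x;y)) \geq C/\mu_{\bb}(B^i_{\sqrt{t}}(x;y))$; the latter is a standard consequence of $\int p_t\,d\mu_{\bb}=1$ on a ball of $\mu_{\bb}$-comparable size combined with Cauchy-Schwarz and the Harnack chain inside that ball (this is the $L^2$-to-pointwise step). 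The doubling property ensures that replacing $(x;y)$ by $(\tx;\ty)$ in the denominator only changes the constant.

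Second, I would pass from near-diagonal to the full Gaussian lower bound via a chaining argument. Given $(x;y),(\tx;\ty)$ with $\rho_i((x;y),(\tx;\ty))=:d$, choose an integer $N\approx 1+d^2/t$ and points $(z_0;w_0)=(x;y),\dots,(z_N;w_N)=(\tx;\ty)$ along an intrinsic length-minimizing curve, with consecutive distances $\leq \kappa\sqrt{t/N}$. Iterating Chapman-Kolmogorov,
\begin{equation}
p_t((x;y),(\tx;\ty)) \geq \int\cdots\int \prod_{j=0}^{N-1} p_{t/N}((z_j;w_j),(\zeta_{j+1};\omega_{j+1}))\,d\mu_{\bb}(\zeta_j;\omega_j),
\end{equation}
and restricting each $(\zeta_j;\omega_j)$ to a ball of radius $\sim \sqrt{t/N}$ around $(z_j;w_j)$, the near-diagonal bound yields at each step a factor of order $c_0$ (the volume factor cancels against the integration), producing $c_0^N \sim \exp(-C d^2/t)$. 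Together with the doubling-based comparison of volumes at scales $\sqrt{t/N}$ and $\sqrt{t}$, which contributes the $1/\mu_{\bb}(B^i_{\sqrt{t}}(x;y))$ prefactor, this yields exactly \eqref{eqn148.012}.

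The main obstacle is the near-diagonal pointwise bound for $p_t((x;y),(x;y))$: near corners, the measures of small balls degenerate anisotropically and the estimates of~\cite{SaloffCosteLMS, Sturm3} must be applied with the intrinsic balls $B^i_r$ rather than Euclidean balls. However, since the intrinsic metric is uniformly equivalent to $\rho^e_2$ and the doubling property has been verified in this very formulation (Proposition~\ref{prop3.1.0004}), the usual proof goes through without modification. The symmetry and self-adjointness are essential here, which is why the hypothesis $X=c=0$ cannot be dropped: the chaining argument requires the weak solution property in both variables, which is only automatic when the operator coincides with its formal adjoint.
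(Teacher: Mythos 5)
Your proposal is correct and reconstructs the argument the paper invokes (Sturm's Corollary 4.10) and sketches in the remark following the statement: an on-diagonal bound from conservation, self-adjointness (via $p_{2t}(\xi,\xi)=\|p_t(\xi,\cdot)\|_{L^2(d\mu_{\bb})}^2$), and the Gaussian upper bound, promoted to a near-diagonal bound by the parabolic Harnack inequality, followed by Chapman--Kolmogorov chaining along a $\rho_i$-geodesic with the doubling property controlling the intermediate volume factors. The one point to make explicit is that the mass-concentration step $\int_{B^i_{K\sqrt{t}}(\xi)}p_t(\xi,\cdot)\,d\mu_{\bb}\geq 1/2$ comes from conservation (constants lie in $\Dom(L_Q)$ with $L_Q1=0$) together with the upper bound of Corollary~\ref{cor5.2}, not from a Harnack chain; the Harnack inequality enters only in extending the diagonal estimate to $\rho_i(\xi,\eta)\lesssim\sqrt{t}$.
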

\begin{remark} The proof of the off-diagonal lower bound follows from the
  Harnack inequality and a lower bound for $p_t((x;y),(x;y)).$ This diagonal
  estimate relies on the semi-group property \emph{and} the self adjointness of the
  heat kernel with respect to the measure $d\mu_{\bb}.$ Generalizations of
  these lower bounds to non-self adjoint operators are given
  in~\cite{StrookProbPDE}; we will return to this question in a later publication.
\end{remark}

\section{Applications to Population Genetics}
The foregoing results have many applications to models in population genetics.
Let $P$ be a manifold with corners, and $L$ a generalized Kimura diffusion
operator defined on $P.$ As shown in~\cite{EpMaz2}, we can introduce adapted
coordinates near each boundary point so that the operator takes the
form~\eqref{eqn1.001}.  Let $\{H_i:i=1,\dots, I\}$ denote the hypersurface
boundary components of $P.$ As shown in Proposition~\ref{prop1.001}, the
coefficient $b_i(x;y)$ of the vector field transverse vector to $H_i$, in
adapted coordinates, has a natural meaning along each $H_i.$

The principal symbol of $L$, and hence the second order part of this operator,
are globally defined throughout $P$.  The weight functions are invariantly
defined by $L$ along the faces of $P$. In~\cite{EpMaz2}, we prove a tubular
neighborhood theorem for each face of $P$, which implies that the weight
functions have global extensions to non-negative functions on $P,$ which can be
taken to be positive if the weights themselves are. Throughout this section we
assume that the weights are strictly positive. If a weight is constant, then it
can be extended to be globally constant.  As explained at the end of
Section~\ref{sec2}, these extended weights define a measure
$d\mu_{L}$, which is locally of the form~\eqref{eqn3.004} and, in each
coordinate chart, satisfies
\begin{equation}
  C^{-1}\leq\frac{d\mu_{L}}{d\mu_{\bb}}\leq C.
\end{equation}

The principal symbol of $L,$ $q^{(2)}_L,$ is a non-negative quadratic form on
the fibers of $T^*P.$ Its canonical dual defines an incomplete Riemannian
metric on $P,$ as discussed in Chapter 2 of~\cite{EpMaz2}. We denote the
distance between points $\xi,\eta\in P$ defined by this metric by
$\rho_i(\xi,\eta),$ which is consistent with our  usage of this
notation in Sections~\ref{sec3}--\ref{sect4}.  The compactness of $P$ and Proposition~\ref{prop3.1.0004}
together imply that there is a constant $D$ so that, for $\xi\in P$ and $0<r,$ 
\begin{equation}\label{eqn164.014}
  \mu_L(B^i_{2r}(\xi))\leq 2^D  \mu_L(B^i_{r}(\xi)),
\end{equation}
i.e., $\mu_L$ is a doubling measure.

Using $d\mu_L$ and the principal symbol of $L$, we can define a Dirichlet form $Q$ with core 
$\cC^1(P):$ 
\begin{equation}
Q(u,v)=\int\limits_{P}q^{(2)}_L(du,dv)d\mu_{L}.
\end{equation}
For an open set $B\subset P,$ we use the notation $Q_B$ for 
\begin{equation}
Q_B(u,v)=\int\limits_{B}q^{(2)}_L(du,dv)d\mu_{L},\text{ for }u,v\in\cC^1(\overline{B}).
\end{equation}
If $B$ is contained in an adapted local coordinate chart, then $Q_B(u,v)$ takes
the form given in~\eqref{eqn1.009}, with the measure $d\mu_{\bb}$ replaced by
$e^Ud\mu_{\bb},$ for a smooth function $U.$ Using Jerison's covering argument
and the scale invariant $L^2$-Poincar\'e inequality, with a uniform constant, for 
sufficiently small balls, we can show that there is a constant $C_P$ so that for any $\xi\in P,$ $0<r,$ and
$u\in\cC^1(B_r^i(\xi))$ we have the estimate
\begin{equation}\label{eqn167.014}
  \int\limits_{B_r^i(\xi)}|u-\bar{u}|^2d\mu_L\leq C_Pr^2Q_{B_r^i(\xi)}(u,u),
\end{equation}
where
\begin{equation}
  \bar{u}=\frac{\int\limits_{B_r^i(\xi)}ud\mu_L}{\mu_L(B_r^i(\xi))}.
\end{equation}

This Dirichlet form defines a second order, self-adjoint operator $L_{Q}$ by 
\begin{equation}
  Q(u,v)=-(L_{Q}u,v)\text{ for all }u\in\Dom(L_Q)\text{ and }v\in\Dom(Q).
\end{equation}
The difference $L_Q-L$ is a vector field $V,$ tangent to the boundary of $P.$ If the weights
are constant, then $V$ is a smooth tangent vector field, but otherwise it has
logarithmically divergent coefficients as in~\eqref{eqn176.005}. In any case
there is a globally defined section $\Xi$ of $T^*P$ so that $L$ is the second order
operator defined by the non-symmetric Dirichlet form
\begin{equation}
  \tQ^{\Xi}(u,v)=Q(u,v)+\int\limits_{P}q^{(2)}_L(du,v\Xi)d\mu_{L}.
\end{equation}
The ellipticity hypotheses imply that $q_L^{(2)}(\Xi,\Xi)^{\frac 12}$ diverges at
worst logarithmically at $\pa P.$ We let $L^{\Xi}_Q$ denote the unbounded
operator on $L^2(P;d\mu_L)$ defined by this Dirichlet form. The addition of
such a vector field does not change the natural boundary condition that appears
in the definition of domain of $L^{\Xi}_Q,$ and it again follows that functions
in $\cC^2(P)$ automatically belongs to $\Dom(L^{\Xi}_Q).$

\subsection{Regularity Results}
We begin our analysis of regular solutions to the Cauchy problem for
$\pa_tu-Lu=0$ by considering the local regularity for solutions  with initial
data in $L^2(P),$ and then $\cC^0(P).$  The following can be deduced from
the results of Section~\ref{sect4}.
\begin{theorem}\label{thm5.1.001}
  Let $P$ be a compact manifold with corners and $L$ a generalized Kimura diffusion
  operator with smooth coefficients defined on $P.$ Suppose that the weights
  defined by $L$ are positive  along every boundary
  component. If $u$ is a weak solution to the initial value problem
  \begin{equation}
    (\pa_t-L)u=0\text{ with }u(\xi,0)=f(\xi)\in L^2(P,d\mu_{L}),
  \end{equation}
then $u\in\CI(P\times (0,\infty)).$
\end{theorem}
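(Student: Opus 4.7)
The plan is to reduce the global smoothness assertion to the local Harnack/Hölder estimates proved in Section~\ref{sect4} and then bootstrap in time and space. First I would cover $P$ by finitely many coordinate neighborhoods: some contained in the open interior, where $L$ is uniformly elliptic and classical parabolic regularity applies; and others of the form of adapted coordinate charts near a boundary point, in which $L$ has the form~\eqref{eqn1.001}. In such a chart $L$ may be written as $L_Q - V_X - c$ as in Section~\ref{sec4.2.001}, with $V_X$ a tangent vector field whose coefficients have at worst logarithmic singularities at $\pa S_{n,m}$ and with positive weights $\{b_i\}$ bounded below by a positive constant. The hypotheses of Corollary~\ref{cor5.1.003} are therefore satisfied, and $u$, being a weak solution of $\pa_t u = L u$ on the chart, is Hölder continuous on $K \times [\tau,T]$ for each compact $K$ in the chart and each $0 < \tau < T$.

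Next I would bootstrap. Since $L$ is time-independent with smooth coefficients in $(\bx;\by)$, the time derivatives $\pa_t^k u$ are themselves weak solutions of the same parabolic equation (justified by testing with time difference quotients and passing to the limit). Applying Corollary~\ref{cor5.1.003} iteratively on nested time subintervals yields $\pa_t^k u \in \cC^{0,\gamma}_{\WF}$ for every $k$ on any compact subset of $P \times (0,\infty)$. I would then view the identity $L u(\cdot,t) = \pa_t u(\cdot,t)$ as an elliptic equation for $u(\cdot,t)$ with a right-hand side of arbitrary $\cC^{k,\gamma}_{\WF}$-regularity, and invoke the Schauder theory for generalized Kimura operators from~\cite{EpMaz2} to conclude $u(\cdot,t) \in \cC^{\infty}_{\WF}(P)$. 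Combined with interior smoothness and the positivity of weights along every face, this gives $u \in \CI(P)$ for each fixed $t>0$; joint smoothness in $(\xi,t)$ then follows from repeated use of $\pa_t^k u = L^k u$.

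The main obstacle is the iteration step near $\pa P$. Differentiating a weak solution and then re-applying the Harnack-type estimate requires that the singular tangential terms in $V_X,$ which involve factors of $\log x_j$ as in~\eqref{eqn19.006}, not destroy the energy estimates at any stage. This is precisely the role of Lemma~\ref{lemB2.2.01010}: the singular contributions are absorbable into a small multiple of the Dirichlet form plus a controlled $L^2$ term, allowing the Moser iteration driving Corollary~\ref{cor5.1.003} to proceed uniformly through each differentiation. A secondary subtlety is the passage from $\cC^{\infty}_{\WF}$-smoothness (which is natural in the square-root variables $(\sqrt{\bx};\by)$) to $\CI(P)$-smoothness in the original variables $(\bx;\by)$; this ultimately rests on the strict positivity of the weights along each boundary hypersurface, which ensures that the model heat kernels governing the leading behavior at $\pa P$ are themselves smooth up to the boundary in the $(\bx;\by)$ coordinates.
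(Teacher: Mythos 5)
Your first step coincides exactly with the paper's proof: cover $P$ by adapted charts, write $L=L_{Q_j}-V_{X_j}$ with $X_j$ satisfying the logarithmic bound~\eqref{eqn220.012} (with $k=1$), and apply Corollary~\ref{cor5.1.003} to conclude $u(\cdot,t)\in\cC^{0,\gamma}_{\WF}(P)$ for $t>0$. Where you diverge is in the passage from H\"older to smooth: the paper simply cites Corollary 11.2.2 of~\cite{EpMaz2}, which packages precisely the statement that the regular solution with $\cC^{0,\gamma}_{\WF}$ data is in $\CI(P\times(0,\infty))$, whereas you re-derive a version of this by hand, differentiating in time and bootstrapping with the elliptic Schauder theory of~\cite{EpMaz2}. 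Your route is viable, but note two points. First, the worry that the Moser iteration must be pushed ``through each differentiation'' is misplaced: since $L$ is time-independent, each $\pa_t^k u$ is again a weak solution of the \emph{same} equation (this does require knowing the semigroup is analytic on $L^2$, which follows from the sector condition~\eqref{eqn118.010}), so Corollary~\ref{cor5.1.003} is applied verbatim to each one and Lemma~\ref{lemB2.2.01010} is never re-used in any new way. Second, when you invoke the elliptic theory of~\cite{EpMaz2} to solve $(\mu-L)\pa_t^k u=\mu\pa_t^k u-\pa_t^{k+1}u$ in the $\cC^{k,2+\gamma}_{\WF}$ ladder, you must identify the weak $L^2$ solution with the H\"older solution produced there; this identification is the same kind of argument the paper carries out explicitly in the heat-kernel section (the $u(t)=v(t)$ computation) and should not be left implicit. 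Finally, the conversion from $\cC^{\infty}_{\WF}$ regularity to $\CI(P)$ is automatic from the definition of the spaces $\cC^{k,\gamma}_{\WF}$ (classical derivatives up to order $k$ are controlled), and is not where the positivity of the weights enters; that positivity is used in the Harnack/H\"older step, through the uniform lower bound $\beta_0$ required by the doubling and Poincar\'e estimates. The net trade-off: the paper's citation is shorter and leans entirely on the parabolic theory of~\cite{EpMaz2}, while your bootstrap makes the mechanism explicit at the cost of re-proving, in outline, what that corollary already contains.
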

\begin{proof} Let $\{(\phi_j,U_j)\}$ be a cover of $P$ by adapted coordinate
  charts, where $\phi_j:U_j\to W_j\subset S_{n_j,m_j}.$ In each coordinate
  chart there is a measure and Dirichlet form $Q_j$ defined in a neighborhood
  of $(0;0)$ so that, in this chart, the operator $L$ is of the form
  $L_{Q_j}-V_{X_j}.$ As $L$ is assumed to have smooth coefficients, the
  coefficient functions $X_j$ satisfy an estimate of the form given
  in~\eqref{eqn220.012}, with $k=1.$ The measure is defined throughout
  $S_{n_j,m_j},$ and the Dirichlet form can be extended as well.  The operator
  $L_{Q_j}$ is a model operator outside of a compact neighborhood of $W_j.$
  Corollary~\ref{cor5.1.003} implies that for $t > 0,$ the solution
  $\phi_j^*u(\cdot,t)$ belongs to the H\"older space
  $\cC^{0,\gamma}_{\WF}(W_j)$ for each $j.$ This shows that $u(\cdot,
  t)\in\cC^{0,\gamma}_{\WF}(P)$ for $t>0.$ We can therefore apply Corollary
  11.2.2 from~\cite{EpMaz2} to conclude that $u\in\CI(P\times (0,\infty)).$
\end{proof}

We now use the estimates in Corollary~\ref{cor4.3.007} along with the
maximum principle to conclude that there exist constants $0<\gamma<1,$ and $C,$
so that if $u(\xi,t)$ is the regular solution to $(\pa_t-L)u=0,$ in $P\times
(0,\infty)$ with $u(\xi,0)=f(\xi)\in\cC^0(P),$ then
\begin{equation}
  \|u(\cdot,t)\|_{\WF,0,\gamma}\leq C\|f\|_{\cC^0} (t^{-\gamma/2} + t^{-1/2}). 
\label{smallt}
\end{equation}
Using this for small $t$ and the estimates in~\cite{EpMaz2} for $t \gg 0$, we see that 
when $\mathrm{Re}\, \mu > 0$, 
\begin{equation}
  (\mu-L)^{-1}f=\int\limits_{0}^{\infty}e^{-\mu t}u(\cdot,t)dt\in\cC^{0,\gamma}_{\WF}(P).
\end{equation}
In fact there is a constant $C_{\mu}$ so that 
\begin{equation}
  \|(\mu-L)^{-1}f\|_{\WF,0,\gamma}\leq C_{\mu}\|f\|_{\cC^0},
\end{equation}
which leads immediately to the following. 
\begin{corollary}\label{cor5.1.008}
  Let $P$ be a compact manifold with corners and $L$ is a generalized Kimura
  diffusion operator defined on $P$ with positive weights. If $\overline{L}$ is
  the $\cC^0$-graph closure of $L$ acting on $\cC^3(P),$ then for $\mu$ with
  $\Re\mu>0,$ the resolvent operator $(\mu-\overline{L})^{-1}$ is  bounded
  from $\cC^0(P)$ to $\cC^{0,\gamma}_{\WF}(P),$ and is therefore a compact
  operator. For initial data in $\cC^0(P),$ the regular solution to the initial
  value problem $\pa_t u-Lu=0$ has an analytic extension to $\{t:\Re t>0\}.$
  The spectrum, $\sigma_{\cC^0}(\overline{L}),$ lies in a conic neighborhood of
  $(-\infty,0].$
\end{corollary}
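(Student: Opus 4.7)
The plan is to verify the three assertions in the following order: (i) boundedness and compactness of the resolvent, (ii) sectoriality of $\overline{L}$, which simultaneously yields both the analytic extension of the semigroup and the conic location of the spectrum, and (iii) the discrete structure of the spectrum.

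\smallskip

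\noindent\textbf{Step 1 (Compactness).} Boundedness of $(\mu-\overline{L})^{-1}:\cC^0(P)\to\cC^{0,\gamma}_{\WF}(P)$ for $\Re\mu>0$ is contained in the estimate $\|(\mu-\overline{L})^{-1}f\|_{\WF,0,\gamma}\leq C_{\mu}\|f\|_{\cC^0}$ displayed in the paragraph preceding the corollary. To upgrade this to compactness on $\cC^0(P)$, I would invoke the compact embedding $\cC^{0,\gamma}_{\WF}(P)\hookrightarrow\cC^0(P)$. The latter is an Arzel\`a--Ascoli statement: since $P$ is compact and $\rho_i$ is continuous with respect to the manifold-with-corners topology, any norm-bounded family in $\cC^{0,\gamma}_{\WF}(P)$ is uniformly bounded and equicontinuous in the standard topology, hence precompact in $\cC^0(P)$.

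\smallskip

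\noindent\textbf{Step 2 (Sectoriality, analyticity, and conic spectrum).} The contraction estimate $\|e^{t\overline{L}}\|_{\cC^0\to\cC^0}\leq 1$ (via the maximum principle for the regular solution) gives, by Hille--Yosida, $\sigma_{\cC^0}(\overline{L})\subset\{\Re\mu\leq 0\}$ and $\|(\mu-\overline{L})^{-1}\|_{\cC^0\to\cC^0}\leq 1/\Re\mu$ for $\Re\mu>0$. To obtain analyticity and the conic spectrum, I would sharpen this to the sectorial estimate
\begin{equation}
\|\overline{L}\, e^{t\overline{L}}\|_{\cC^0\to\cC^0}\leq C/t \quad\text{for small } t>0.
\end{equation}
The plan is to factor $e^{t\overline{L}}=e^{(t/2)\overline{L}}\circ e^{(t/2)\overline{L}}$: the first factor maps $\cC^0(P)\to\cC^{0,\gamma}_{\WF}(P)$ with operator norm controlled by \eqref{smallt}, and for the second factor I would compose with $\overline{L}$ and use the parabolic Schauder theory of~\cite[Ch.~10--11]{EpMaz2} for the heat equation on the anisotropic scale $\cC^{k,\gamma}_{\WF}$, $\cC^{k,2+\gamma}_{\WF}$, which governs how $\overline{L}\,e^{s\overline{L}}$ maps between adjacent members of the scale. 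Multiplying the two bounds will yield the sectorial estimate. Standard semigroup theory then shows that the resolvent set contains a sector $\{|\arg\mu|<\pi/2+\delta\}$ with $\|(\mu-\overline{L})^{-1}\|\leq M/|\mu|$ on it, that $e^{t\overline{L}}$ admits an analytic extension to a sector $\{|\arg t|<\delta\}\supset\{\Re t>0\}$, and that $\sigma_{\cC^0}(\overline{L})$ lies in the complementary sector, a conic neighborhood of $(-\infty,0]$.

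\smallskip

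\noindent\textbf{Step 3 (Discreteness).} Since $(\mu-\overline{L})^{-1}$ is compact by Step~1 for one (hence every) point in its resolvent set, Riesz--Schauder theory forces the spectrum inside the sector produced by Step~2 to consist of isolated eigenvalues of finite algebraic multiplicity with no finite accumulation point.

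\smallskip

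The main obstacle is the derivative bound $\|\overline{L}\,e^{t\overline{L}}\|_{\cC^0\to\cC^0}\leq C/t$ of Step~2. The first factor of the semigroup factorization is controlled by the already-established \eqref{smallt}, but the dual estimate $\|\overline{L}\,e^{s\overline{L}}\|_{\cC^{0,\gamma}_{\WF}\to\cC^0}$ with the correct power of $s$ must be extracted from the parametrix construction and Schauder estimates of \cite{EpMaz2}; verifying that the exponents on both sides of the factorization combine to the sharp $1/t$ bound, with constants uniform for $t$ in a compact interval, is where the technical care is required.
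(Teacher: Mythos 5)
Your Step~1 agrees with the paper, but Steps~2 and~3 take a genuinely different route from the one the paper follows, and the route you choose has a quantitative gap that you have flagged but underestimated.

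The paper never tries to establish the sectorial generator estimate $\|\overline{L}\,e^{t\overline{L}}\|_{\cC^0\to\cC^0}\leq C/t$ directly. Instead it observes that $u(\cdot,\epsilon)\in\cC^{0,\gamma}_{\WF}(P)$ for any $\epsilon>0$, and then invokes Theorem 11.2.1 of~\cite{EpMaz2}, which already asserts that solutions of the heat equation with initial data in the anisotropic H\"older scale extend analytically in $t$; letting $\epsilon\downarrow 0$ gives the analytic extension to $\{\Re t>0\}$. For the conic spectrum, it again avoids any $\cC^0$ semigroup bound: compactness of the resolvent forces $\sigma_{\cC^0}(\overline{L})$ to consist of eigenvalues; the eigenfunctions are smooth; so $\sigma_{\cC^0}(\overline{L})=\sigma_{\cC^{0,\gamma}_{\WF}}(\overline{L})$, and the latter is contained in a conic neighborhood of $(-\infty,0]$ by Theorem 11.1.1 of~\cite{EpMaz2}. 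Both the analyticity and the sector are therefore \emph{inherited} from the already-established H\"older-space theory, not re-proved on $\cC^0$.

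The factorization route you propose cannot close with the estimates this paper has available. Estimate \eqref{smallt} gives $\|e^{(t/2)\overline{L}}\|_{\cC^0\to\cC^{0,\gamma}_{\WF}}\lesssim t^{-1/2}$ (the $t^{-1/2}$ term dominates the $t^{-\gamma/2}$ term for small $t$ since $\gamma<1$; the $t^{-1/2}$ is not an artifact but is forced by the large-separation Lipschitz term in \eqref{eqn144.008}). The best parabolic Schauder bound one can hope for on the other factor is $\|\overline{L}\,e^{(t/2)\overline{L}}\|_{\cC^{0,\gamma}_{\WF}\to\cC^0}\lesssim t^{(\gamma-2)/2}$, corresponding to the gain of $2-\gamma$ WF-H\"older orders needed to make $\overline{L}$ land in $\cC^0$. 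The product is $t^{-1/2}\cdot t^{(\gamma-2)/2}=t^{(\gamma-3)/2}$, which for $\gamma<1$ is strictly worse than $t^{-1}$; the factorization therefore gives only $\|\overline{L}\,e^{t\overline{L}}\|_{\cC^0\to\cC^0}\lesssim t^{-\alpha}$ for some $\alpha>1$, which is not a sectorial estimate and does not imply analyticity or a resolvent sector. You would need the sharp smoothing rate $\|e^{t\overline{L}}\|_{\cC^0\to\cC^{0,\gamma}_{\WF}}\lesssim t^{-\gamma/2}$, and the Harnack-chaining argument of Corollary~\ref{cor4.3.007} provably does not give that. So the ``technical care'' you anticipate in combining exponents is not merely a matter of bookkeeping; with the ingredients in this paper the exponents do not combine to $1/t$, and you should follow the paper's simpler route of deducing both analyticity and the sector from the corresponding results on $\cC^{0,\gamma}_{\WF}(P)$.
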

\begin{proof} Since $u(\cdot,\epsilon)\in\cC^{0,\gamma}_{\WF}(P),$ for any
  $\epsilon>0,$ Theorem 11.2.1 in~\cite{EpMaz2} shows that $u$ extends to be
  analytic in sets of the form $\{t:\Re t>\epsilon\}.$ The analyticity
  assertion follows from this. Since $\overline{L}$ has a compact resolvent,
  every point in $\sigma_{\cC^0}(\overline{L})$ is an eigenvalue. The eigenvectors
  belong to $\CI(P),$ and therefore the spectrum of $\overline{L}$ is the same
  as its spectrum acting on $\cC^{0,\gamma}_{\WF}(P).$ In Theorem 11.1.1
  of~\cite{EpMaz2} this is shown to lie in a conic neighborhood of
  $(-\infty,0].$
 \end{proof}

\subsection{Heat Kernel Estimates}
As noted above, the kernel function for the semigroup 
 $e^{tL^{\Xi}_Q}$ takes the form 
 $$p_t(\xi,\eta)d\mu_{L}(\eta).$$
 Corollary~\ref{cor5.2} indicates that it is reasonable to expect that the
 kernel $p_t$ is a bounded function for $t>0.$ The kernel function locally
 satisfies the equations
\begin{equation}\label{eqn158.012}
  (\pa_t-L_{Q,\xi}+V_{X,\xi})p_t(\xi,\eta)=0\text{ and }
(\pa_t-L_{Q,\eta}+\hV_{X,\eta})p_t(\xi,\eta)=0.
\end{equation}
The adjoint of the vector field, $\hV_{X,\eta},$ is computed with respect to the measure
$d\mu_{L}.$ It is of the form $-V_X+\hc,$ where, in local coordinates, 
$ \hc= \Div_{A,\bb}X.$
The operator $\hL=L_{Q,\eta}-\hV_{X,\eta}$ is the $L^2$-adjoint of
$L=L_{Q,\xi}-V_{X,\xi}.$

It is important to note that this representation for the adjoint operator is
different from the one employed in~\cite{EpMaz2}. In this paper the semigroup
acts on $L^2(P;d\mu_{L}),$ and the adjoint $\hL$ is defined with respect to
this Hilbert space structure. The operator $\overline{L}$ is defined as the
$\cC^0$-graph closure of $L$ acting on $\cC^3(P).$ The adjoint,
$\overline{L}^t,$ acts canonically on the dual space, i.e., the space of
regular Borel measures on $P.$ If $dV_{P}$ is a smooth non-degenerate measure
on $P,$ then $d\mu_{L}=WdV_{P}.$ For $v$ a smooth function, we then have the
relation: $\hL v=W^{-1}\overline{L}^t(Wv).$

It is clear from the discussion in~\ref{ssec4.1.006} that functions in
$\cC^3(P)$ belong to the domain of the operator $L^{\Xi}_Q.$ For
$f\in\cC^3(P),$ let $v(t)=e^{tL^{\Xi}_Q}f,$ and let $u(t)$ be the regular
solution to $(\pa_tu-Lu)=0,$ with $u(t)=f,$ given by Theorem 11.2.1
in~\cite{EpMaz2}. The regularity results in~\cite{EpMaz2} show that
$u(t)\in\Dom(L^{\Xi}_Q)$ for all $t\in [0,\infty).$ Thus, there is a constant
$m$ such that
\begin{equation}
\begin{split}
  \pa_t\|u(t)-v(t)\|^2_{L^2(P;d\mu_L)}&=2(L(u(t)-v(t)),(u(t)-v(t)))_{L^2(P;d\mu_L)}\\
&=-2\tQ^{\Xi}(u(t)-v(t),u(t)-v(t))\\
&\leq m \|u(t)-v(t)\|^2_{L^2(P;d\mu_L)}.
\end{split}
\end{equation}
The last line follows using the same argument used to prove~\eqref{eqn135.014}.
This then implies that $u(t)=v(t)$ for all $t\geq 0.$ Hence, if
$p_t(\xi,\eta)$ is kernel for $e^{tL^{\Xi}_Q},$ then the regular
solution is given by
\begin{equation}
  u(\xi,t)=\int\limits_{P}f(\eta)p_t(\xi,\eta)d\mu_{L}(\eta).
\end{equation}
That is, the heat kernel defined by the $L^2$-semigroup is the same as that
defined by the $\cC^0$-theory.

In light of~\eqref{eqn164.014} and~\eqref{eqn167.014}, the argument used to
prove Corollary~\ref{cor5.2} can easily be adapted to prove the following upper
bound on the heat kernel.
\begin{theorem}\label{cor5.2.2} Assume that $P$ is a compact manifold with
  corners and $L$ is a generalized Kimura diffusion defined on $P$ with
  positive weights. If we represent the kernel of the operator $e^{tL}$ as
  $p_t(\xi,\eta)d\mu_{L}(\eta),$ then  there  are positive constants $C_0,C_1,C_2$
  so that, for all $t>0$ and pairs $\xi,\eta\in P$ we have
\begin{equation}\label{eqn207.0055}
 p_t(\xi,\eta)\leq \frac{C_0\exp\left(-\frac{\rho_i^2(\xi,\eta)}{C_2t}\right)}
{\sqrt{\mu_{L}(B^i_{\sqrt{t}}(\xi))\mu_{L}(B^i_{\sqrt{t}}(\eta))}}
\times\left(1+\frac{\rho_i(\xi,\eta)}{\sqrt{t}}\right)^{D}
\cdot\exp(C_1t).
\end{equation}
For each $\eta\in P,$ the function $(\xi,t)\mapsto p_t(\xi,\eta)$ belongs
to $\CI(P\times (0,\infty)).$
\end{theorem}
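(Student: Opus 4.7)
The plan is to extend Corollary~\ref{cor5.2} from the single adapted coordinate chart on $S_{n,m}$ to the compact manifold with corners $P$. The proof of Corollary~\ref{cor5.2} rested on three structural ingredients: the doubling property of the measure, the scale-invariant $L^2$-Poincar\'e inequality on intrinsic balls, and the absorption estimate of Lemma~\ref{lemB2.2.0101} used to control the log-singular drift and zeroth order terms arising from $V_\Xi$ and from the adjoint $\hL$. The first two have already been established globally on $P$ for $d\mu_L$ in~\eqref{eqn164.014} and~\eqref{eqn167.014}, and the third is a local statement that applies chart by chart; a partition of unity subordinate to a finite atlas of adapted coordinate charts on $P$ stitches these local absorption bounds into a global one.

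The heart of the argument is then the Davies perturbation trick. For any function $\phi\in\cU_1$ and $\alpha\in\RR$, set $T_t^{\alpha,\phi}=e^{-\alpha\phi}T_te^{\alpha\phi}$, where $T_t=e^{tL}$. Differentiating $\|T_t^{\alpha,\phi}f\|^2_{L^2(P;d\mu_L)}$ along the flow, the derivation of~\eqref{eqn147.012} carries over verbatim and yields $\|T_t^{\alpha,\phi}\|_{2\to 2}\leq \exp\bigl(((1+\epsilon)\alpha^2+C_\epsilon)t\bigr)$. The key point is that the log-singular drift $V_\Xi$ contributes terms of the shape $(|\alpha||X|_A+|c|+|X|_A^2)u^2$, which are absorbed, with an $\epsilon$-fraction of the Dirichlet form, via Lemma~\ref{lemB2.2.0101} applied chart by chart. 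The same estimate holds for $\hL$: by~\eqref{eqn122.0010} and~\eqref{eqn120.0010}, the zeroth order term $\hc=\Div_{A,\bb}\Xi$ again satisfies an estimate of the form~\eqref{eqn220.012}, so the hypotheses needed to rerun the argument are met.

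The rest follows the pattern of~\cite{SaloffCosteLMS} almost verbatim. The $L^2\to L^2$ bound is upgraded to an $L^1\to L^\infty$ bound on $T_t^{\alpha,\phi}$ by a Nash/Moser iteration driven by the global Sobolev inequality~\eqref{SobIneq}, producing a kernel estimate of the form
\begin{equation*}
p_t(\xi,\eta)\leq \frac{\exp\bigl(\alpha(\phi(\eta)-\phi(\xi))+((1+\epsilon)\alpha^2+C_\epsilon)t\bigr)}{\sqrt{\mu_L(B^i_{\sqrt{t}}(\xi))\mu_L(B^i_{\sqrt{t}}(\eta))}}
\end{equation*}
times a polynomial correction in $\rho_i/\sqrt{t}$. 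Taking the supremum of $\phi(\eta)-\phi(\xi)$ over $\phi\in\cU_1$ yields $\rho_i(\xi,\eta)$, and optimizing in $\alpha$ produces the Gaussian factor $\exp(-\rho_i^2/C_2t)$. The polynomial correction $(1+\rho_i/\sqrt{t})^D$ emerges from the volume comparison between intrinsic balls of the same radius with different centers, using the doubling exponent $D$ in~\eqref{eqn164.014}, and reflects the failure of $d\mu_L$ to be Ahlfors regular near $\pa P$. The smoothness of $(\xi,t)\mapsto p_t(\xi,\eta)$ is then immediate from Theorem~\ref{thm5.1.001}: for fixed $\eta$ and $0<s<t$, write $p_t(\cdot,\eta)=T_{t-s}p_s(\cdot,\eta)$; the just-established bound gives $p_s(\cdot,\eta)\in L^2(P;d\mu_L)$, and Theorem~\ref{thm5.1.001} promotes this to $\CI$ regularity in $(\xi,t)$.

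The main obstacle is the first step: verifying that the Davies estimate for $\|T_t^{\alpha,\phi}\|_{2\to 2}$ is truly uniform in $\alpha$ despite the log-singularities of $V_\Xi$ and $\hc$ at $\pa P$. Lemma~\ref{lemB2.2.0101} is precisely the tool that allows these terms to be traded against an $\epsilon$-fraction of the Dirichlet form at the cost of a large multiple of the $L^2$-norm, and it is exactly this trade that produces the $\exp(C_1t)$ factor in~\eqref{eqn207.0055} without contaminating the Gaussian decay. Once this is in hand, globalizing from Corollary~\ref{cor5.2} is essentially a bookkeeping exercise in partitions of unity.
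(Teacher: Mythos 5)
Your proposal is correct and follows essentially the same route as the paper: globalize the doubling property and scale-invariant Poincar\'e inequality to $P$ (the paper's~\eqref{eqn164.014} and~\eqref{eqn167.014}), then rerun the proof of Corollary~\ref{cor5.2} — the Davies perturbation bound~\eqref{eqn147.012} with the log-singular drift and the adjoint's zeroth order term $\hc$ absorbed via Lemma~\ref{lemB2.2.0101}, combined with the Moser-type sup bounds in both variables \`a la Saloff-Coste — and finally obtain smoothness of $(\xi,t)\mapsto p_t(\xi,\eta)$ from Theorem~\ref{thm5.1.001}, since $p_\cdot(\cdot,\eta)$ is a weak solution with $L^2$ data at any positive time. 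This matches the paper's argument in both structure and the key estimates used.
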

\begin{proof}
  All statements have been proved but the last. The first equation
  in~\eqref{eqn158.012} shows that $p_{\cdot}(\cdot,\eta)$ is a weak solution to a
  parabolic equation to which Theorem~\ref{thm5.1.001} applies. This proves the
  last assertion.
\end{proof}

In a  neighborhood, $U\times U,$ of $(0,0)$ in the product coordinate chart $S_{n,m}\times
S_{n,m}\simeq S_{2n,2m},$ the heat kernel satisfies the equation
\begin{equation}
  (2\pa_t-L_{Q,\xi}-L_{Q,\eta}+V_{X,\xi}-V_{X,\eta})p_t(\xi,\eta)=-\hc(\eta)p_t(\xi,\eta)
\end{equation}
If we extend our analysis slightly to include the inhomogeneous problem, and
apply a bootstrap argument, then we can easily show that if the weights are
constant (so that $\hc$ and $V_X$ are smooth), then
\begin{equation}
  p_t(\xi,\eta)\in\CI(U\times U\times (0,\infty)).
\end{equation}
\begin{corollary}\label{cor6.1}
  Let $P$ be a manifold with corners and $L$ a generalized Kimura operator with
  smooth coefficients defined on $P.$ Assume that $L$ has constant weights
  along $\pa P.$ Let $\tQ$ be a globally defined (but possibly non-symmetric)
  Dirichlet form with measure $d\mu_L$ that defines $L$.  The heat kernel for
  $L$ has a representation as $p_t(\xi,\eta)d\mu_{L}(\eta),$ where
  \begin{equation}
    p_t\in\CI(P\times P\times (0,\infty)).
  \end{equation}
\end{corollary}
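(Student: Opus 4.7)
The plan is to regard $p_t(\xi,\eta)$ as a weak solution of a single parabolic equation on the compact manifold with corners $P\times P$, and then invoke Theorem~\ref{thm5.1.001} in this enlarged setting.

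Note first that $P\times P$ is itself a compact manifold with corners whose hypersurface boundary components are $H_i\times P$ and $P\times H_j$; a product of adapted charts $S_{n,m}\times S_{n,m}$ is an adapted chart $S_{2n,2m}$ on $P\times P$. Consider the operator
\[
M = \tfrac12\bigl(L_{Q,\xi}+L_{Q,\eta}-V_{X,\xi}+V_{X,\eta}\bigr).
\]
Its principal symbol decouples as the sum of the principal symbols of $L$ acting separately in $\xi$ and $\eta$, which is of generalized Kimura type on $P\times P$: in product adapted coordinates every factor $x_i\partial_{x_i}^2$ and $\tilde x_j\partial_{\tilde x_j}^2$ appears with transverse weight equal to $\tfrac12 b_i$ or $\tfrac12\tilde b_j$, a positive constant by hypothesis. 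Because the weights are constants, the formulae~\eqref{lV1},~\eqref{eqn176.005}, and~\eqref{eqn122.0010} guarantee that the first-order terms $V_X$ and the potential $\hat c$ are smooth on $P\times P$. Thus $M$, together with the smooth bounded potential $-\tfrac12\hat c$, defines a generalized Kimura diffusion operator $M_1=M-\tfrac12\hat c$ on $P\times P$ with smooth coefficients and positive constant weights, of exactly the type covered by the theory of Section~\ref{sect4}.

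Combining the two equations in~\eqref{eqn158.012} yields
\[
(\partial_t-M_1)\,p_t(\xi,\eta)=0\quad\text{on }(P\times P)\times(0,\infty).
\]
By Theorem~\ref{cor5.2.2}, for every fixed $\eta$ the kernel $p_t(\cdot,\eta)$ is smooth and uniformly bounded on compact subsets of $P\times(0,\infty)$; the analogous statement in $\eta$ follows from the corresponding result for the adjoint Dirichlet form (whose coefficients are again smooth when the weights are constant). Hence for any $t_0>0$, $p_{t_0}\in L^2(P\times P;d\mu_L\otimes d\mu_L)$, and $p_t$ is a weak solution of the Cauchy problem for $\partial_t-M_1$ on $P\times P$ with this $L^2$ initial data at time $t_0$. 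Theorem~\ref{thm5.1.001} applied on the compact manifold with corners $P\times P$ to the operator $M_1$ then yields $p\in\CI(P\times P\times(t_0,\infty))$; since $t_0>0$ was arbitrary, this gives the claim.

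The main obstacle is the verification that $M_1$ genuinely fits the framework of a generalized Kimura diffusion on $P\times P$. The constancy of the weights is essential: if some $b_i$ were non-constant, the formulae~\eqref{lV1} and~\eqref{eqn176.005} would contribute logarithmic singularities in $V_X$ and $\hat c$ mixing both sets of variables, and the direct appeal to Theorem~\ref{thm5.1.001} would fail. A bootstrap built from Corollary~\ref{cor5.1.003}, applied alternately to the two equations in~\eqref{eqn158.012}, would then be required in order to handle the inhomogeneous problem in one variable at a time.
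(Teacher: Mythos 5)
Your route is essentially the paper's: the paper also passes to the product chart $S_{n,m}\times S_{n,m}\simeq S_{2n,2m}$, adds the two equations in~\eqref{eqn158.012} to get $(2\pa_t-L_{Q,\xi}-L_{Q,\eta}+V_{X,\xi}-V_{X,\eta})p_t=-\hc(\eta)p_t$, and exploits the fact that constant weights make $V_X$ and $\hc$ smooth. Two points in your write-up need repair, one cosmetic and one substantive. The cosmetic one: the weights of $\tfrac12(L_{Q,\xi}+L_{Q,\eta})$ on $P\times P$ are $b_i$ and the corresponding weights in the $\eta$-variables, not $\tfrac12 b_i$. The weight is $\tfrac14 Lr_i^2\restrictedto_{r_i=0}$, and multiplying the operator by $\tfrac12$ rescales the intrinsic distance $r_i$ by $\sqrt2$ (equivalently, rescaling $x_i$ restores the normal form $x_i\pa_{x_i}^2+b_i\pa_{x_i}$), so the weight is unchanged; this is harmless here since the weights are positive constants either way.

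The substantive point is the zeroth-order term. Your $M_1=M-\tfrac12\hc$ is not a generalized Kimura diffusion operator in the sense of the paper, since those have no potential term, and Theorem~\ref{thm5.1.001} cannot be invoked verbatim: its proof runs through Corollary~\ref{cor5.1.003}, whose hypothesis is exactly $c=0$ (constants must solve the equation). Even with constant weights, $\hc=\Div_{A,\bb}X$ is smooth but in general nonzero, so the direct appeal to Theorem~\ref{thm5.1.001} on $P\times P$ is not licensed as stated. This is precisely why the paper says one must ``extend the analysis slightly to include the inhomogeneous problem'' and run a bootstrap, treating $\hc(\eta)p_t$ as a smooth right-hand side (using the local boundedness of $p_t$ from Theorem~\ref{cor5.2.2} to start the iteration); your sketch defers the bootstrap to the non-constant-weight case only, but a (much easier, since $\hc$ is smooth and bounded) version of it is needed here as well. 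A related minor caveat: since all the estimates used are local, it is cleaner to argue that $p_t$ is a local weak solution of the product equation in each product chart, as the paper does, rather than to set up a global Cauchy problem on $P\times P$ from time $t_0$, which tacitly requires identifying $p_t$ with the product semigroup solution. With these adjustments your argument coincides with the paper's.
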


As noted in the proof of Corollary~\ref{cor5.1.008}, the spectrum of $L$ acting
on $\cC^0(P)$ agrees with its spectrum acting on $\cC^{0,\gamma}_{\WF}(P)$ for
any $0<\gamma<1.$ Hence, Corollary 12.3.3 in~\cite{EpMaz2} implies that there
is an $\theta<0$ so that
\begin{equation}
  \Spec_{\cC^0(P)}(L)\setminus\{0\}\subset\{\mu:\:\Re\mu<\theta\}.
\end{equation}
We can also conclude that the constant functions span the null-space of $L,$
and that there is a probability measure of the form $\nu=w(\eta)d\mu_{L}$ spanning
the nullspace of $L^t.$ In fact the proof of Corollary 12.3.3 in~\cite{EpMaz2}
shows that if $(\pa_t-L)u=0$ with $u(\xi,0)=f(\xi),$ then
\begin{equation}
  u(\xi,t)=\nu(f)+O(e^{\theta t}).
\end{equation}

If the weights are not constant, then the heat kernel $p_t(\xi,\eta)$ is no longer
smooth in the product space for $t>0.$ From Theorem~\ref{thm5.1.001} it follows that
$p_t(\cdot,\eta)\in\CI(P)$ for fixed $\eta$ and $t>0.$ In general it has a complicated
singularity as $\eta$ tends to $\pa P.$ The stationary distribution $\nu$ is the
push-forward of this kernel
\begin{equation}
  \nu(\eta)=\left[\int\limits_{P}p_t(\xi,\eta)d\sigma(\xi)\right] d\mu_{L}(\eta).
\end{equation}
The singularities of $p_t(\xi,\eta),$ beyond those arising from the measure
$d\mu_{L}(\eta),$ produce higher order terms in an asymptotic expansion of $\nu(\eta)$
as $q\to \pa P.$ 

To illustrate this we consider a simple 2d-case where the weights are non-constant; the
Kimura operator is:
\begin{equation}
  L=x\pa_x^2+\pa_y^2+b(y)\pa_x, \text{ with }b(y)>\beta>0\text{ and }b'(0)\neq 0,
\end{equation}
which implies that $d\mu_{L}(\tx;\ty)=\tx^{b(\ty)-1}d\tx d\ty.$ Working
formally one easily shows that the asymptotic expansion of $\nu$ takes the form:
\begin{equation}\label{eqn71.004}
  \nu(\tx;\ty)\sim
  \left[1+\sum_{j=1}^{\infty}\sum_{k=0}^{2j}\varphi_{jk}(\ty)\tx^{j}\log^k\tx\right]
d\mu_{L}(\tx;\ty),
\end{equation}
where the first few coefficients are given by
\begin{equation}
\begin{split}
  \varphi_{12}(\ty)&=-\frac{b'(\ty)^2}{b(\ty)}\\
\varphi_{11}(\ty)&=\frac{b''(\ty)}{b(\ty)}-2\left(\frac{b'(\ty)}{b(\ty)}\right)^2\\
\varphi_{10}(\ty)&=(1+b(\ty))\frac{b''(\ty)}{b(\ty)^2}-2(2+b(\ty))\frac{b'(\ty)^2}{b(\ty)^3}.
\end{split}
\end{equation}
This indicates the additional complexities one expects to see in this case.

\subsection{Eigenvalue Asymptotics}
If $L$ is a generalized Kimura diffusion, with positive weights, defined by a
possibly non-symmetric Dirichlet form on a compact manifold with corners, then
the operators $e^{tL}$ are trace class for all $t>0.$ This does not require the
weights to be constant. The estimate in~\eqref{eqn207.0055} shows that, for all
positive times, the heat kernels are square integrable with respect to the
finite measure $d\mu_{L}.$ Since $e^{tL}=e^{\frac{t L}{2}}e^{\frac{t L}{2}},$
it follows that $e^{tL}$ is a product of Hilbert-Schmidt operators and
therefore trace class.  We do not pursue the non-symmetric case further here, as
heat kernel asymptotics do not generally lead to eigenvalue asymptotics unless
the spectrum is real.

Assuming now that $L=L_Q$ is defined globally by a symmetric Dirichlet form, it
follows that the Friedrichs extension of $L_Q$ is an unbounded self adjoint
operator acting on $L^2(P;d\mu_{\bb}).$ As noted above, the trace of the heat
semigroup, $\Tr(e^{tL})$ is finite for $\Re t>0.$ From this it follows
immediately that $L$ has a compact resolvent acting on $L^2(P).$ Let
$0=\lambda_1<\lambda_2\leq\lambda_2\leq \cdots$ be the spectrum of $-L_{Q}.$
Define the counting function
\begin{equation}
  N(\lambda)=|\{i:\lambda_i\leq \lambda\}|.
\end{equation} 
Let $d=\dim P.$ The heat kernel estimates in local coordinate charts~\eqref{eqn207.005} show that
\begin{equation}
  \lim_{t\to 0^+}t^{\frac d2}\Tr(e^{tL}),
\end{equation}
exists; the standard Tauberian argument then gives asymptotics for $N(\lambda)$ as
$\lambda\to\infty.$

\begin{theorem} 
  Let $P$ be a compact manifold with corners of dimension $d,$ and $L$ a Kimura
  operator, self adjoint on $L^2(P;d\mu_P),$ defined by a globally defined symmetric
  Dirichlet form. Assume that the weights are strictly positive. The heat
  kernel $e^{tL}$ is a trace class operator, and there is a dimensional
  constant $K_d$ so that
   \begin{equation}
     \lim_{t\to 0^+}t^{\frac d2}\Tr e^{tL}=K_d\mu_{L}(P).
   \end{equation}
The counting function $N(\lambda),$ for the eigenvalues of $L,$ satisfies the
asymptotic relation
\begin{equation}
  N(\lambda)=\lambda^{\frac d2}\left(\frac{K_d\mu_{L}(P)}{\Gamma(1+d/2)}+o(1)\right).
\end{equation}
\end{theorem}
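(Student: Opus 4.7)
The plan is to establish the short-time asymptotic $\Tr e^{tL}\sim K_d\mu_L(P)\, t^{-d/2}$ as $t\to 0^+$ and then invoke Karamata's Tauberian theorem to deduce the counting-function asymptotic. First, I would verify that $e^{tL}$ is trace class for each $t>0$. The Gaussian upper bound from Theorem~\ref{cor5.2.22}, together with the finiteness of $\mu_L(P)$ (which uses compactness of $P$ and positivity of the weights), shows that the kernel $p_{t/2}(\xi,\eta)$ is bounded on $P\times P$, hence lies in $L^2(P\times P,\, d\mu_L\otimes d\mu_L)$. Thus $e^{tL/2}$ is Hilbert--Schmidt on $L^2(P;d\mu_L)$ and, by self-adjointness, $e^{tL}=(e^{tL/2})^2$ is trace class with
\begin{equation*}
\Tr e^{tL}=\int_P p_t(\xi,\xi)\, d\mu_L(\xi).
\end{equation*}

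Next, I would establish the short-time asymptotic of the trace. Fix a finite cover of $P$ by adapted coordinate charts $(U_j,\phi_j)$ and a subordinate partition of unity $\{\chi_j\}$. In each chart, the operator $L$ decomposes into a model part $L_{\bb,m}$ as in~\eqref{eqn2.009}, for which an explicit heat kernel is available from~\cite{EpMaz2}, plus lower-order perturbations (the tangential vector field $V$ and any conservative terms). The diagonal of the model heat kernel has the pointwise asymptotic $t^{d/2}p^{\mathrm{mod}}_t(\xi,\xi)\to K_d$, and the contribution of the perturbative terms is handled through a Duhamel expansion, using the upper bound from Theorem~\ref{cor5.2.22} as a uniform dominating function to justify passage to the limit. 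Summing over $j$ then yields
\begin{equation*}
\lim_{t\to 0^+} t^{d/2}\Tr e^{tL}=K_d\mu_L(P).
\end{equation*}

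Finally, the asymptotic for $N(\lambda)$ follows from Karamata's Tauberian theorem. Since $-L$ is a non-negative self-adjoint operator with purely discrete spectrum $\{\lambda_i\}$ (a consequence of $e^{tL}$ being compact), we may write
\begin{equation*}
\Tr e^{tL}=\int_0^{\infty} e^{-t\lambda}\, dN(\lambda).
\end{equation*}
The asymptotic $\Tr e^{tL}\sim K_d\mu_L(P)\,t^{-d/2}$ as $t\to 0^+$, together with the monotonicity of $N$, gives by Karamata that $N(\lambda)\sim K_d\mu_L(P)\lambda^{d/2}/\Gamma(1+d/2)$ as $\lambda\to\infty$.

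The main obstacle is the second step: establishing the pointwise short-time diagonal asymptotic for $p_t(\xi,\xi)$ in a sufficiently uniform manner across the boundary strata. The measure $d\mu_L$ is not Ahlfors regular, and the intrinsic ball volumes $\mu_L(B^i_{\sqrt t}(\xi))$ scale very differently near the boundary than in the interior, so the crude two-sided bounds from Theorems~\ref{cor5.2.22} and Corollary~\ref{cor5.3} are not by themselves precise enough. The resolution is that the explicit model heat kernels of~\cite{EpMaz2} encode exactly the correct singular behavior dictated by the weights, so that upon integrating the pointwise model asymptotic against $d\mu_L$ the boundary contributions are absorbed cleanly into $K_d\mu_L(P)\,t^{-d/2}$. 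Showing that the Duhamel remainders corresponding to the lower-order perturbation $L-L_{\bb,m}$ are genuinely of order $o(t^{-d/2})$ in the trace -- despite the mildly singular log-coefficients allowed in our Dirichlet-form setup -- is the most delicate technical point.
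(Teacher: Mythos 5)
Your approach to step two is quite different from what the paper actually does, and your own closing paragraph already identifies why it would be hard to carry through. You propose to get the short-time diagonal asymptotic of $p_t$ \emph{pointwise, uniformly up to the boundary}, by freezing coefficients to a model operator $L_{\bb,m}$ and running a Duhamel expansion. But the perturbation $L-L_{\bb,m}$ in the Dirichlet-form setting has log-singular coefficients, the parametrix from~\cite{EpMaz2} is described there as ``far from sharp,'' and the measure $d\mu_L$ is not Ahlfors regular, so the ball volumes $\mu_L(B^i_{\sqrt t})$ blow up at wildly different rates as one moves across boundary strata. Getting $o(t^{-d/2})$ control of the Duhamel remainder in trace norm, uniformly near $\pa P$, is genuinely unresolved in your sketch; this is the gap.

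The paper sidesteps all of it with a cruder but much cleaner decomposition. It writes $P=P_\delta\cup(P\setminus P_\delta)$, where $P_\delta=\{\xi:\rho_i(\xi,\pa P)\geq\delta\}$. On $P_\delta$ the operator is honestly uniformly elliptic with smooth coefficients, so the \emph{classical} interior heat-kernel expansion $t^{d/2}p_t(\xi,\xi)\to K_d$, uniform on compacta, applies with no degeneracy to manage, giving
$$
\liminf_{t\to 0^+}t^{\frac d2}\Tr e^{tL}\geq K_d\mu_L(P)-a_\delta,\qquad a_\delta\to 0.
$$
Near the boundary the paper never attempts to find the precise diagonal asymptotic. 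Instead it proves the lemma that
$$
t^{\frac d2}\int\limits_{P\setminus P_\delta}p_t(\xi,\xi)\,d\mu_L(\xi)\leq C\delta,
$$
using only the upper bound $p_t(\xi,\xi)\leq C/\mu_L(B^i_{\sqrt t}(\xi))$ from Theorem~\ref{cor5.2.22} together with a direct, elementary lower bound on $\mu_L(B^i_{\sqrt t}(\xi))$ in adapted coordinates (where the small correction $\alpha=2\eta\beta_1\sqrt t$ in the exponent is controlled by $\sup_{0<t<1}t^{-n\eta\beta_1\sqrt t}<\infty$). Sending $\delta\to 0$ after $t\to 0$ then squeezes the limit. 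This avoids any uniform pointwise parametrix near $\pa P$ and never needs to touch the Duhamel series or the log-singular perturbation terms at all. Your first and third steps (Hilbert--Schmidt argument for trace class, Karamata Tauberian theorem) match the paper; if you want to make your middle step rigorous, you should replace the parametrix/Duhamel strategy with this interior/boundary split.
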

\begin{remark} Note that this theorem does not require the assumption that the
  weights are constant along the boundary. 
\end{remark}
\begin{proof}
Let $p_t(\xi,\eta)$ be the heat kernel. For $\delta>0$ define
\begin{equation}
  P_{\delta}=\{\xi\in P:\: \rho_i(\xi,\pa P)\geq \delta\}.
\end{equation}
Proposition 2.32 in~\cite{Berline-Getzler-Vergne1} implies that the trace is
given by the formula
\begin{equation}
  \Tr(e^{tL})=\int\limits_{L}p_t(\xi,\xi)d\mu_{L}(\xi).
\end{equation}
For $\delta>0,$  $L\restrictedto_{P_{\delta}}$ is uniformly elliptic with smooth
coefficients and therefore it follows from a classical argument that
\begin{equation}
  p_{t}(\xi,\xi)=\frac{K_d}{t^{\frac{d}{2}}}+O\left(\frac{1}{t^{\frac{d}{2}-1}}\right),
\end{equation}
with $t^{\frac{d}{2}}p_{t}(\xi,\xi)$ converging to $K_d$ uniformly in
$P_{\delta},$ see~\cite{Berline-Getzler-Vergne1}. Because $p_{t}(\xi,\xi)$ is
non-negative we see that
\begin{equation}
\begin{split}
  \liminf_{t\to 0^+}t^{\frac d2}\Tr(e^{tL})&\geq
  \liminf_{t\to 0^+}\int\limits_{P_{\delta}}t^{\frac d2}p_{t}(\xi,\xi)d\mu_{L}(\xi)\\
&=K_d\mu_{L}(P_{\delta})=K_d\mu_{L}(P)-a_{\delta},
\end{split}
\end{equation}
where $a_{\delta}\to 0$ as $\delta\to 0.$ 

On the other hand, the following lemma gives an estimate for
\begin{equation}
  t^{\frac d2}\int\limits_{P\setminus P_{\delta}}p_{t}(\xi,\xi)d\mu_{L}(\xi).
\end{equation}
\begin{lemma} Suppose that $d\mu_{L}$ is a measure defined on $P,$ a compact
  manifold with corners, with weights $\{b_j(\eta)\}$ bounded below by
  $\beta_0>0,$ and $\cC^1$ in the square-root variables. There is a constant
  $C$ depending on the dimension, $\beta_0,$ and the upper bound $\beta_1$ on
  $\|\nabla b_j(w,y)\|,$ so that
  \begin{equation}
    t^{\frac d2} \int\limits_{P\setminus P_{\delta}}p_{t}(\xi,\xi)d\mu_{L}(\xi)\leq C\delta
  \end{equation}
\end{lemma}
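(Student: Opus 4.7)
Apply the on-diagonal case of the Gaussian upper bound from Theorem~\ref{cor5.2.22} to get $p_t(\xi,\xi) \leq C_0 e^{C_1 t}/\mu_L(B^i_{\sqrt{t}}(\xi))$, which reduces the lemma to the purely metric-measure estimate
\begin{equation*}
t^{d/2} \int\limits_{P \setminus P_\delta} \frac{d\mu_L(\xi)}{\mu_L(B^i_{\sqrt{t}}(\xi))} \leq C \delta,
\end{equation*}
with the $e^{C_1 t}$ factor absorbed into $C$ over the bounded range of $t$ relevant for the Tauberian application in the theorem. Cover $P \setminus P_\delta$ by finitely many boundary-adapted coordinate charts $(w;y) \in S_{n,m}$ and pass to the square-root variables $w_i = \sqrt{x_i}$, with respect to which $\rho_i(\xi,\pa P) \asymp \min_i w_i$. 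By a union bound it suffices to control the contribution from a single face $\{w_1 \leq C\delta\}$ in each chart.

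The key input is the explicit scaling
\begin{equation*}
\mu_L(B_r(\xi)) \asymp r^m \prod_{j=1}^n \mu_{b_j(\xi)}(B_r(w_j)),
\end{equation*}
in which the variable weight $b_j(\tilde\xi)$ has been replaced by its value at $\xi$; the multiplicative error is uniformly bounded because the $\cC^1$ hypothesis in the square-root variables gives $|b_j(\tilde\xi) - b_j(\xi)| \leq \beta_1 r$ on $B_r(\xi)$. The one-dimensional calculations in the proof of Proposition~\ref{prop3.1.0004} then supply $\mu_b(B_r(w)) \sim r^{2b}$ for $w \leq r$ and $\sim r w^{2b-1}$ for $w \geq r$. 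Setting $r = \sqrt{t}$ and substituting, the integrand $t^{d/2}\,d\mu_L(\xi)/\mu_L(B_r(\xi))$ simplifies on $\{w_j \geq r,\ j \geq 2\}$ to $dw\,dy$ when $w_1 > r$, and to $C r^{1 - 2b_1(\xi)} w_1^{2 b_1(\xi) - 1}\,dw\,dy$ when $w_1 \leq r$.

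Integrating the first expression over $\{r < w_1 \leq C\delta\}$ yields $O(\delta)$; integrating the second over $\{0 < w_1 \leq \min(r,C\delta)\}$ yields $r^{1-2b_1}\cdot\min(r,C\delta)^{2b_1}/(2b_1)$, which is bounded by $r/(2\beta_0) \leq C\delta/(2\beta_0)$ in the small-time regime $r \leq C\delta$ of the Tauberian application. Contributions from neighborhoods $\{w_i,w_j \leq C\delta\}$ of corners of codimension $k \geq 2$ are handled by the same computation and produce $O(\delta^k)$, hence are negligible. Summing the contributions of the finitely many faces and charts gives the required linear bound $C\delta$.

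The main obstacle is the apparent singularity of $w^{2b-1}\,dw$ when $b < 1/2$, which naively would force a bound scaling as $\delta^{2b} \gg \delta$; this is evaded precisely because the corresponding degeneracy $\mu_L(B_r(\xi)) \sim r^{d + 2b_1 - 1}\prod_{j \geq 2} w_j^{2b_j - 1}$ near the face $\{w_1 = 0\}$ exactly cancels the singular factor $w_1^{2b_1-1}$ in $d\mu_L$, reducing the quotient $d\mu_L/\mu_L(B_r)$ to a bounded multiple of Lebesgue measure in the $(w;y)$-variables. The resulting constant $C$ depends only on $d$, $\beta_0$, $\beta_1$, and the chart cover, as required.
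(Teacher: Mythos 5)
Your strategy is essentially the paper's: apply the on-diagonal heat kernel bound, pass to product-type balls in square-root adapted coordinates, compare $\mu_L(B_r(\xi))$ to the constant-weight model $\prod_j \mu_{b_j(\xi)}(B_r(w_j))$, and then perform the explicit one-dimensional integrations. The one step that needs more care is the assertion that the multiplicative error in this comparison is uniformly bounded ``because $|b_j(\tilde\xi)-b_j(\xi)|\leq \beta_1 r$.'' That pointwise bound does not suffice on its own: the pointwise ratio of the integrands is $\tilde w_j^{\,2(b_j(\tilde\xi)-b_j(\xi))}\in[\tilde w_j^{\,2\beta_1 r},\tilde w_j^{\,-2\beta_1 r}]$, and for fixed $r$ this is \emph{unbounded} as $\tilde w_j\to 0$. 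The correct route, which the paper takes, is to shift the exponent by $\pm 2\beta_1 r$ (constant in $\tilde\xi$), integrate, and only afterwards observe that the resulting extra factor $r^{\pm 2\beta_1 r}$ (schematically $t^{-\mathrm{const}\cdot\sqrt t}$) is uniformly bounded for $t\in(0,1]$; the boundedness of the measure ratio is a consequence of integration, not of a pointwise estimate. Once this is inserted your argument closes, and your explicit restriction to the regime $\sqrt t\lesssim\delta$ correctly identifies the range in which the $O(\delta)$ bound actually holds (the paper is silent about this, but its final bound likewise fails for $\sqrt t\gg\delta$ when some $b_j<1/2$; only the $t\to 0^+$ regime matters for the Tauberian step).
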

\begin{remark} As before, we could replace the regularity assumption in this
  lemma with~\eqref{regb}. For simplicity we use the bound on the gradient in
  the following argument.
\end{remark}
\begin{proof}
  To prove the lemma we cover $P\setminus P_{\delta}$ by a finite collection of
  coordinate charts, in which the Dirichlet form defining $L$ takes the form
  given in the square root coordinates $(w_1,\dots,w_n;y_1,\dots,y_m)$ with
  $m+n=d,$ in~\eqref{eqn23.009}. The measure takes the form
  \begin{equation}
    d\mu_{L}(w;y)=\prod_{i=1}^n w_i^{2b_i(w;y)-1}e^{U(w;y)}dwdy,
  \end{equation}
where $U$ is a bounded, continuous function and 
\begin{equation}
  0<\beta_0<b_i(w,y)<B,\text{ and }\sum_{j=1}^n|\pa_{w_j}b(w;y)|+
\sum_{l=1}^m|\pa_{y_l}b_i(w;y)|\leq \beta_1.
\end{equation}
Theorem~\ref{cor5.2.2} now gives 
\begin{equation}
  p_t((w;y),(w;y))\leq\frac{C}{\mu_{L}(B^i_{\sqrt{t}}(w;y))}.
\end{equation}
To prove the lemma we need to bound $\mu_{L}(B^i_{\sqrt{t}}(w;y))$ from
below. To that end we observe that there is a positive constant $\eta$ so that
\begin{multline}
  B^{p,e}_{\eta\sqrt{t}}(w;y)=\prod_{j=1}^n[(w_j-\eta\sqrt{t})\vee 0,w_j+\eta\sqrt{t}]\times
\prod_{l=1}^m[y_l-\eta\sqrt{t},y_l+\eta\sqrt{t}]\\\subset
B^i_{\sqrt{t}}(w;y)
\end{multline}
and therefore it suffices to bound $\mu_{L}(B^{p,e}_{\eta\sqrt{t}}(w;y))$
from below. 

To bound the contribution to the trace from $P\setminus P_{\delta}$ we integrate in these coordinates
over sets of the form
\begin{equation}
  \left\{[0,\delta]\times [0,\frac 12]^{n-1}\cup [0,\frac 12]\times[0,\delta]\times
  [0,\frac 12]^{n-2}\cup\cdots\cup
[0,\frac 12]^{n-1}\times[0,\delta]\right\}\times (-1,1)^{m}.
\end{equation}
Indeed, it obviously suffices to estimate the contribution from the first term:
$[0,\delta]\times [0,\frac 12]^{n-1}\times (-1,1)^m.$ We first get a lower
bound on 
\begin{multline}
  \mu_{L}(B^{p,e}_{\eta\sqrt{t}}(w;y))=\\
\int_{y_1-\eta\sqrt{t}}^{y_1+\eta\sqrt{t}}\cdots
\int_{y_m-\eta\sqrt{t}}^{y_m+\eta\sqrt{t}}\int_{(w_1-\eta\sqrt{t})\vee
    0}^{w_1+\eta\sqrt{t}}\cdots \int_{(w_n-\eta\sqrt{t})\vee
    0}^{w_n+\eta\sqrt{t}}\tw_1^{2b_1(\tw;\ty)-1}\cdots\tw_n^{2b_n(\tw;\ty)-1}d\tw d\ty
\end{multline}
Writing
\begin{equation}
  b_j(\tw;\ty)= b_j(\tw;\ty)- b_j(w;y)+ b_j(w;y),
\end{equation}
then within the domain of this integral, 
\begin{equation}
  b_j(\tw;\ty)\leq b_j(w;y)+\eta\sqrt{t}\beta_1.
\end{equation}
Since the coordinates $\{\tw_j\}$ are less than $1$ in the domain of the
integral (at least for small $t$), we have the estimate
\begin{multline}
  \mu_{L}(B^{p,e}_{\eta\sqrt{t}}(w;y))\geq\\
(2\eta\sqrt{t})^m
\int_{(w_1-\eta\sqrt{t})\vee
    0}^{w_1+\eta\sqrt{t}}\cdots \int_{(w_n-\eta\sqrt{t})\vee
    0}^{w_n+\eta\sqrt{t}}\tw_1^{2b_1(w;y)-1+\alpha}\cdots\tw_n^{2b_n(w;y)-1+\alpha}d\tw d\ty,
\end{multline}
where
\begin{equation}
  \alpha=2\eta\beta_1\sqrt{t}.
\end{equation}
This shows that
\begin{multline}
   \mu_{L}(B^{p,e}_{\eta\sqrt{t}}(w;y))\geq\\
(2\eta\sqrt{t})^m \frac{\prod_{j=1}^n[(w_j+\eta\sqrt{t})^{2b_j(w;y)+\alpha}-((w_j-\eta\sqrt{t})\vee
    0)^{2b_j(w;y)+\alpha}]}{\prod_{j=1}^n(2b_j(w;y)+\alpha)}.
\end{multline}

Using this estimate in the integral we can show that there is a constant, $C,$
depending only of the dimension, $\beta_0$ and $\beta_1$ so that 
\begin{equation}
 \int_{(-1,1)^m} \int_{0}^{\delta}\int_{0}^{\frac 12}\cdots\int_{0}^{\frac
   12}\frac{d\mu_{L}(w;y)}
{ \mu_{L}(B^{p,e}_{\eta\sqrt{t}}(w;y))}\leq \frac{C\delta t^{-n\eta\beta_1\sqrt{t}}}{t^{\frac{d}{2}}}.
\end{equation}
Since 
\begin{equation}
  \sup_{0<t<1}t^{-n\eta\beta_1\sqrt{t}}\leq e^{2n\eta\beta_1e^{-2}},
\end{equation}
the lemma follows easily from this estimate and those above.
\end{proof}
Using the lemma we see that
\begin{equation}
  \limsup_{t\to 0^+}t^{\frac d2}\Tr(e^{tL})\leq K_d\mu_{L}(P)+C\delta-a_{\delta}.
\end{equation}
Letting $\delta\to 0,$ we conclude that
\begin{equation}
  \lim_{t\to 0^+}t^{\frac d2}\Tr(e^{tL})=K_d\mu_{L}(P).
\end{equation}
Since we can rewrite the trace as
\begin{equation}
  \Tr(e^{tL})=\int\limits_{0}^{\infty}e^{-\lambda t}dN(\lambda),
\end{equation}
the Tauberian theorem, see~\cite{FellerV2}, implies that
\begin{equation}
  N(\lambda)=\lambda^{\frac d2}\left(\frac{K_d\mu_{L}(P)}{\Gamma(1+N/2)}+o(1)\right).
\end{equation}
\end{proof}

In the 1-dimensional case the operator takes the form:
\begin{equation}
  Lu=x(1-x)\pa_x^2u+b(x)\pa_xu.
\end{equation}
If $b\in\cC^1([0,1]),$ then this operator is defined by the Dirichlet form
\begin{equation}
  Q(u,v)=\int\limits_{0}^1x(1-x)\pa_xu\pa_xvx^{b_0-1}(1-x)^{b_1-1}e^{U(x)}dx,
\end{equation}
where
\begin{equation}
  b_0=b(0),\, b_1=b(1)\text{ and }\pa_x U(x)=\frac{b(x)-(b_0(1-x)-b_1 x)}{x(1-x)}.
\end{equation}
The operator can therefore be expressed as
\begin{equation}
  Lu=x^{1-b_0}(1-x)^{1-b_1}e^{-U(x)}\pa_x\left(x^{b_0}(1-x)^{b_1}e^{U(x)}\pa_x\right)u.
\end{equation}
From this formulation it is clear that the stationary distribution, defined as the
unique probability measure $\nu$ satisfying $L^t\nu=0$ is
\begin{equation}
  \nu=c_0x^{b_0-1}(1-x)^{b_1-1}e^{U(x)}dx,
\end{equation}
where $c_0$ is chosen so that $\nu([0,1])=1.$

This discussion applies equally well in higher dimensions. Suppose that $P$
is a domain in $\bbR^p,$ and that the
operator $L$ is globally defined by the Dirichlet form 
\begin{equation}
  Q(u,v)=\int\limits_{P}\langle A\nabla u,\nabla v\rangle e^{U}W_{\bb}(z)dz.
\end{equation}
We assume that in local coordinates $W_{\bb}$ takes the form
\begin{equation}
  W_{\bb}(x;y)=e^{w(x;y)}x_1^{b_1-1}\cdots x_n^{b_n-1},
\end{equation}
with $w(x;y)$ a smooth function.  Integrating by parts formally, we see that
\begin{equation}
  Lu= W_{\bb}^{-1}e^{-U}\nabla\cdot(e^{U}W_{\bb}A\nabla u),
\end{equation}
and therefore 
\begin{equation}
  L^t\nu = \nabla\cdot(e^{U}W_{\bb}A\nabla W^{-1}_{\bb}e^{-U} \nu).
\end{equation}
From this it is evident that if $\nu=W_{\bb}e^{U}dz,$ then
\begin{equation}
  L^t\nu=0.
\end{equation}
Under our usual assumption that each $b_i>\beta_0>0,$ the measure $\nu$ is
finite and therefore can be normalized to define the stationary distribution for
$L^t.$ This statement remains correct whether or not the $\{b_i\}$ are
constants, though if they are not, then the resulting operator is not a
standard generalized Kimura diffusion operator. Note that we have shown
in~\cite{EpMaz2} that, for generalized Kimura diffusion operators, there is a
unique stationary distribution whenever $\bb\geq \beta\bone>0.$ See
also~\cite[pg. 189]{Kimura1964}.

In a forthcoming paper by the first author and Camelia Pop,~\cite{EpPop2}, we give a
probabilistic approach to handling the logarithmically divergent perturbations,
and an independent proof of the Harnack inequality for generalized Kimura
diffusion operators. This paper also establishes various properties of the
Markov processes defined by these operators, and the solutions of the
corresponding systems of SDEs. Pop has further analyzed the probabilistic aspects
of Kimura diffusions in two additional papers,~\cite{Pop1, pop2}, establishing
among other things, that the Feynman-Kac and Girsanov formul{\ae} can
be used to represent solutions to these diffusion equations.

The results of this paper represent considerable progress in our understanding
of the qualitative properties of solutions to Kimura diffusion equations, its
kernel function, and the relationship between $L$ and $L^t,$ at least when the
weights are positive. The main outstanding analytic questions  seem to be:
\begin{enumerate}
\item What is the structure of the heat kernel, and what are the 
estimates for solutions of the parabolic problem when the weights vanish 
at some points of $\pa P?$ For biological applications it is
reasonable to consider cases where the weights vanish on hypersurface boundary
components, or on components of lower dimensional strata of the boundary. In
these cases the measure $d\mu_{L}$ may not be finite.
\item For non-constant weights, what is the detailed behavior of the
heat kernel near the incoming face?
\item What does the size of the gap in the spectrum around $0$ depend upon?
\item Under what conditions is the span of the eigenfunctions of $L$ dense in $\cC^0(P)?$
\end{enumerate}

\appendix
\section{Lemmas for the Proof of Theorem~\ref{prop3.2.003} }\label{appC}
The argument begins with a series of geometric lemmas,
see~\cite{SaloffCosteLMS} or~\cite{Jerison1}, which we now recall. We let
$E=B^i_r(w_0;y_0),$ and $B_0$ denote a $\rho^e_{\infty}$-ball in the set $\cF,$
which we call a covering though really $\{2B:\: B\in\cF\}$ is a covering, so
that $(w_0;y_0)\in 2B_0.$ Recall that $(w_0;y_0)$ is the center of the ball
$E.$ Let $B\in \cF$ be another ball in the covering, with center $(w;y).$ We
let $\gamma_B$ denote the Euclidean geodesic from $(w;y)$ to the center of
$B_0.$ The following geometric lemma is proved in~\cite{SaloffCosteLMS}.
\begin{lemma} For any $B\in\cF$ we have that
  \begin{equation}
    d(\gamma_B,\pa E)\geq \frac{1}{2}d(B,\pa E)=\frac{10^3}{2}r(B).
  \end{equation}
Moreover, any ball $B'$ in $\cF$ such that $2B'$ intersects $\gamma_B$ has
radius bounded below by
\begin{equation}
  r(B')\geq \frac{1}{4} r(B).
\end{equation}
\end{lemma}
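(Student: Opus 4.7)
The plan is to deduce both inequalities from the concavity of the function $d(\cdot, \pa E)$ along Euclidean line segments lying in $E$, together with the defining relation $r(B) = 10^{-3} d(B, \pa E)$ of the Jerison covering. The crucial observation is that $E = B_r(w_0; y_0)$ is a product of intervals $\prod_i [a_i, b_i] \subset \bbR^{n+m}$ with $a_i \geq 0$, so
\[
d(\zeta, \pa E) = \min_i \min(\zeta_i - a_i,\; b_i - \zeta_i)
\]
is a minimum of affine functions on $E$, hence concave.

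For the first inequality, I would parametrize $\gamma_B$ as $z(t) = (1-t)\,p_B + t\,p_0$ for $t \in [0,1]$, where $p_B$ and $p_0$ denote the centers of $B$ and $B_0$ respectively. Concavity, combined with $d(p_B, \pa E) \geq d(B, \pa E) = 10^3\, r(B)$ and $d(p_0, \pa E) \geq 10^3\, r(B_0)$, gives
\[
d(z(t), \pa E) \;\geq\; (1-t) \cdot 10^3 r(B) + t \cdot 10^3 r(B_0) \;\geq\; 10^3 \min(r(B), r(B_0)).
\]
It then suffices to verify $r(B_0) \geq r(B)/2$. This follows from two simple estimates: first, $r(B) \leq 10^{-3} r$, since $p_B \in E$ forces $d(p_B, \pa E) \leq r$; second, $r(B_0) \geq r/(10^3 + 2)$, which comes from $(w_0; y_0) \in 2 B_0$, giving $d(p_0, \pa E) \geq r - 2\, r(B_0)$, together with the identity $d(p_0, \pa E) = 10^3 r(B_0)$. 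Combining the two yields $r(B_0) \geq r(B) \cdot 10^3/(10^3+2) > r(B)/2$.

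For the second assertion, pick any $z \in 2 B' \cap \gamma_B$. By the first part, $d(z, \pa E) \geq \frac{10^3}{2} r(B)$. For any $\zeta \in B'$ the triangle inequality yields $\rho^e_{\infty}(\zeta, z) \leq r(B') + 2\, r(B') = 3\, r(B')$, so $d(\zeta, \pa E) \geq \frac{10^3}{2} r(B) - 3\, r(B')$. Taking the infimum over $\zeta \in B'$ and using the defining equality $d(B', \pa E) = 10^3\, r(B')$ produces
\[
(10^3 + 6)\, r(B') \;\geq\; \frac{10^3}{2}\, r(B),
\]
from which the stated bound $r(B') \geq r(B)/4$ follows with room to spare.

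The principal technical point will be verifying the identity $d((w_0; y_0), \pa E) = r$ used when lower-bounding $r(B_0)$: when the center $(w_0; y_0)$ lies within Euclidean distance $r$ of a face $\{w_j = 0\}$ of $\pa S_{n,m}$, the truncation of $E$ can make this quantity strictly smaller than $r$, and the naive chain of inequalities for $r(B_0)$ no longer closes. In that case, the estimate $r(B_0) \geq r(B)/2$ must be re-derived by comparing the two radii directly against the relevant local component of $\pa E$, exploiting that $(w_0; y_0)$ and $p_0$ sit in $2 B_0$ and that $B$ is constrained to lie in the same truncated product of intervals. Once this case analysis is carried out, the concavity step and the second assertion go through verbatim.
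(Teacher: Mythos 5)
The paper itself gives no proof of this lemma (it is quoted from~\cite{SaloffCosteLMS}, going back to~\cite{Jerison1}), so your concavity argument is a legitimate, more elementary substitute, and most of it is sound: on the box $E$ the $\rho^e_\infty$-distance to $\pa E$ is a minimum of affine functions, hence concave along $\gamma_B$; the endpoint bounds $d(p_B,\pa E)\ge 10^3r(B)$ and $d(p_0,\pa E)\ge 10^3 r(B_0)$ are correct; and your derivation of the second assertion from the first is exactly the standard one. (Minor slip: $d(p_0,\pa E)=10^3r(B_0)$ is not an identity -- the covering only gives $10^3 r(B_0)\le d(p_0,\pa E)\le (10^3+1)r(B_0)$ -- but this only perturbs your constants harmlessly.)

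The genuine gap is precisely the case you defer to your last paragraph, and it cannot be repaired in the way you propose. In this paper the covering must be Whitney relative to the \emph{full} boundary, i.e.\ $d(\zeta,\pa E)$ is the $\rho^e_\infty$-distance to $\bbR^{n+m}\setminus E$, including the faces $E\cap\pa S_{n,m}$: this is forced by the requirement $\sigma\le 10^{-2}d((w;y),\pa S_{n,m})$, which is what lets Corollary~\ref{cor3.1.005} be applied to every $B\in\cF$, and it is the convention you adopt. But then, if $d((w_0;y_0),\pa S_{n,m})=\epsilon\ll r$, any $B_0\in\cF$ with $(w_0;y_0)\in 2B_0$ satisfies $10^3r(B_0)=d(B_0,\pa E)\le \epsilon+2r(B_0)$, so $r(B_0)\le\epsilon/998$, while a ball $B\in\cF$ whose double contains a deepest point of $E$ has $r(B)$ comparable to $10^{-3}r$. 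Since $\gamma_B$ ends at $p_0$ and $d(p_0,\pa E)\le\epsilon+2r(B_0)$, one gets $d(\gamma_B,\pa E)\lesssim\epsilon\ll\tfrac{10^3}{2}r(B)$: with this choice of $B_0$ the first inequality is simply false, and no re-derivation ``against the local component of $\pa E$'' can restore it (when $\epsilon=0$ no admissible $B_0$ even exists, since the doubled Whitney balls do not reach $\pa E$). The statement must instead be fixed at the level of the setup, in one of two ways: either take the covering relative to $S_{n,m}\setminus E$, as in the metric-measure setting of~\cite{SaloffCosteLMS}, in which case $d((w_0;y_0),\pa E)=r$ and your computation closes, but one must then separately justify the hypothesis of Corollary~\ref{cor3.1.005} for covering balls near $\pa S_{n,m}$; or keep your convention and choose $B_0$ to be a ball whose double contains a point of maximal depth in $E$ (equivalently, a ball of near-maximal radius), so that $d(B_0,\pa E)\ge\tfrac{1000}{1003}\sup_{\zeta\in E}d(\zeta,\pa E)\ge\tfrac12\,10^3r(B)$ for every $B\in\cF$, after which your concavity step and the second assertion go through. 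The second fix is harmless for the application, since the reference ball enters the covering argument only through $u_{4B_0}$, which is replaced by $u_E$ at the end; your write-up should make this modification (or the change of convention) explicit rather than promising a case analysis that keeps the center-based $B_0$.
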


An important feature of this argument is the construction of a chain of balls
in $\cF$ that join a given ball $B\in\cF$ to the central ball, $B_0.$ For each
$B\in\cF$ we let $\cF(B)=\{B_0,B_1,\dots,B_{\ell(B)-1}\},$ where
$B_{\ell(B)-1}=B,$ and
\begin{equation}
\overline{2B_i}\cap\overline{2B_{i+1}}\neq\emptyset.
\end{equation}
This chain is constructed by following the intersections of the doubles of the
balls along the geodesic $\gamma_B.$ The details are
in~\cite{SaloffCosteLMS}. There are two further geometric properties of this cover
that were already proved in~\cite{Jerison1}.
\begin{lemma}\label{lem3.4.005} For any $B\in\cF$ and two consecutive balls $B_i, B_{i+1}$ in
  $\cF(B)$  we have that $B_{i+1}\subset 4 B_i$ and the estimate
  \begin{equation}
    (1+10^{-2})^{-1}r(B_i)\leq r(B_{i+1})\leq (1+10^{-2})r(B_i).
  \end{equation}
Moreover, there is a constant $c$ independent of $B,$ so that
\begin{equation}
  \mu_{\bb}(4B_i\cap 4 B_{i+1})\geq c\max\{\mu_{\bb}(B_i),\mu_{\bb}(B_{i+1})\},
\end{equation}
where
\begin{equation}
  \mu_{\bb}(B)=\int\limits_{B}d\mu_{\bb}(\tw;\ty).
\end{equation}
\end{lemma}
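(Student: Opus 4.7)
The plan is to exploit the precise control on the radii $r(B_i) = 10^{-3} d(B_i, \partial E)$ along the chain $\mathcal{F}(B)$, which was constructed to track the Euclidean geodesic $\gamma_B$. First I would establish the radius comparison. Since the consecutive balls satisfy $\overline{2B_i} \cap \overline{2B_{i+1}} \neq \emptyset$, the distance between their centers is at most $2(r(B_i)+r(B_{i+1}))$. Combined with the fact that each ball has radius proportional (by the factor $10^{-3}$) to its distance to $\partial E$, and that the distance function to $\partial E$ is 1-Lipschitz, the difference $|d(B_i,\partial E) - d(B_{i+1},\partial E)|$ is controlled by $2(r(B_i)+r(B_{i+1}))$. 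This forces
\[
| r(B_i) - r(B_{i+1}) | \leq 2 \cdot 10^{-3}\bigl(r(B_i)+r(B_{i+1})\bigr),
\]
from which an easy algebraic manipulation yields $(1+10^{-2})^{-1}r(B_i) \leq r(B_{i+1}) \leq (1+10^{-2})r(B_i)$. (The precise constant $10^{-2}$ is obtained by absorbing the factor $2 \cdot 10^{-3}$ and rounding.)

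Next, the inclusion $B_{i+1} \subset 4 B_i$ follows formally. Let $(w;y)$ and $(\tw;\ty)$ denote the centers of $B_i$ and $B_{i+1}$. The overlap of the doubled balls gives $\rho^e_\infty((w;y),(\tw;\ty)) \leq 2r(B_i) + 2r(B_{i+1})$, and using $r(B_{i+1}) \leq (1+10^{-2}) r(B_i)$ we obtain
\[
\rho^e_\infty((w;y),(\tw;\ty)) + r(B_{i+1}) \leq 2r(B_i) + 3(1+10^{-2})r(B_i) < 4 r(B_i),
\]
which is exactly the statement $B_{i+1} \subset 4B_i$. (One may slightly enlarge constants in the definition of $\mathcal{F}$ if one wants this with margin.)

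For the measure estimate, the strategy is to produce a ball $B'$ sitting inside $4B_i \cap 4B_{i+1}$ whose radius is a fixed fraction of $r(B_i)$, and then apply the doubling property, Proposition~\ref{prop3.1.0004}. Since the centers of $B_i$ and $B_{i+1}$ are within distance $2r(B_i)+2r(B_{i+1})$ of each other, the midpoint of the straight-line segment joining them is at distance at most $r(B_i)+r(B_{i+1})$ from each center; a ball of radius $\tfrac{1}{2} r(B_i)$ (say) centered at that midpoint is contained in both $4B_i$ and $4B_{i+1}$. An iterated application of Proposition~\ref{prop3.1.0004} gives
\[
\mu_\bb(B') \geq 2^{-kD}\mu_\bb(B_i) \geq c \max\{\mu_\bb(B_i),\mu_\bb(B_{i+1})\},
\]
for a fixed integer $k$ depending only on the ratio of radii, where we used the comparability of $r(B_i)$ and $r(B_{i+1})$ in the last step.

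The only mildly delicate point is that our measures $d\mu_\bb$ are not Ahlfors regular near $\partial S_{n,m}$, so the ``inner ball'' $B'$ must itself satisfy a non-degenerate distance-to-boundary estimate for the doubling argument to give a uniform constant. This is guaranteed, however, because the geodesic $\gamma_B$ stays at distance at least $\tfrac{1}{2}d(B,\partial E)$ from $\partial E$ by the preceding lemma, and the midpoint inherits this property up to a multiplicative constant. With this in place the doubling property applies uniformly, completing the proof.
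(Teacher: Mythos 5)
Two preliminary remarks: the paper itself does not prove this lemma (it is quoted from Jerison's covering construction, cf.~\cite{Jerison1} and~\cite{SaloffCosteLMS}), so your attempt is judged against the standard argument; and two of your three steps are fine. The radius comparison is correct: $\overline{2B_i}\cap\overline{2B_{i+1}}\neq\emptyset$ gives $\rho^e_{\infty}(z_i,z_{i+1})\leq 2\bigl(r(B_i)+r(B_{i+1})\bigr)$ for the centers $z_i,z_{i+1}$, and since $r(B_j)=10^{-3}d(B_j,\pa E)$ and the distance to $\pa E$ is $1$-Lipschitz this forces $|r(B_i)-r(B_{i+1})|\leq \tfrac{2}{999}\bigl(r(B_i)+r(B_{i+1})\bigr)$, well within the stated $(1+10^{-2})$ bound. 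The overlap estimate is also sound: your ball $B'$ of radius $\tfrac12 r(B_i)$ about the midpoint of the segment of centers lies in $4B_i\cap 4B_{i+1}$, while $B_i,B_{i+1}\subset 8B'$, so a bounded number of applications of Proposition~\ref{prop3.1.0004} gives the claim with $c=2^{-3D}$. Your closing ``delicate point'' is unnecessary: Proposition~\ref{prop3.1.0004} is uniform over \emph{all} balls $B_r(w;y)$, including those touching $\pa S_{n,m}$, and that uniformity (not Ahlfors regularity, and not any distance-to-boundary condition on $B'$) is exactly what the iteration uses.

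The genuine gap is the inclusion $B_{i+1}\subset 4B_i$. Your displayed chain asserts $2r(B_i)+3(1+10^{-2})r(B_i)<4r(B_i)$, but $2+3(1+10^{-2})=5.03$, so the final inequality is false. This is not a rounding issue that can be absorbed by ``enlarging constants in the definition of $\cF$'': from the only recorded property of consecutive balls (their closed doubles meet) together with $r(B_{i+1})\approx r(B_i)$, the triangle inequality yields only $B_{i+1}\subset (5+\epsilon)B_i$; to get below $4$ you would need $r(B_{i+1})\leq\tfrac23 r(B_i)$, contradicting the comparability you just established, and shrinking the Whitney ratio $10^{-3}$ only pushes the bound toward $5r(B_i)$. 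So either you must invoke finer information about how the chain $\cF(B)$ is selected along $\gamma_B$ — this is where Jerison's construction actually enters; for instance a selection forcing $z_{i+1}\in 2B_i$ would give $B_{i+1}\subset B(z_i,2r(B_i)+r(B_{i+1}))\subset 4B_i$ — or you must weaken the statement to a larger dilate (say $6B_i$) and verify it still serves the applications (it would: Lemma~\ref{lem3.6.005} only needs $4B_{i+1}\subset 16B_i$ and the Poincar\'e inequality of Corollary~\ref{cor3.1.005} for dilation factors $\kappa\leq 10$). As written, however, this step does not establish the lemma as stated.
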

\noindent
Finally
\begin{lemma}\label{lem3.5.005}
  For any ball $B\in\cF$ and any ball $A\in\cF(B)$ we have that $B\subset 10^4
  A.$
\end{lemma}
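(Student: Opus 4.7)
The plan is to split the containment $B \subset 10^4 A$ into a radius comparison $r(B) \lesssim r(A)$ and a center-displacement bound $d(p_A, p_B) \lesssim r(A)$. Write $\cF(B) = \{B_0, \dots, B_{\ell-1}\}$ with $B_{\ell-1} = B$, and let $A = B_k$; all the centers $p_j$ lie on the straight Euclidean geodesic $\gamma_B$.

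First, the radius comparison comes directly from the first lemma of this appendix: since $p_A \in \gamma_B$, we have $d(p_A, \partial E) \geq \frac{10^3}{2} r(B)$. Combined with the defining identity $d(p_j, \partial E) = (10^3+1)\, r(B_j)$ valid for every $B_j \in \cF$, this yields $r(A) \geq \frac{500}{1001}\, r(B)$, hence $r(B) \leq 3\, r(A)$. The same argument applied to each $B_j$ in the chain also gives the uniform lower bound $r(B_j) \geq r(B)/3$.

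Second, since $p_A$ and $p_B$ lie on the straight Euclidean segment $\gamma_B$, the distance $d(p_A, p_B)$ equals the arc length between them. The sub-chain $B_k, \dots, B_{\ell-1}$ covers this arc, and Lemma~\ref{lem3.4.005} (containment $B_{j+1} \subset 4 B_j$) yields $|p_j - p_{j+1}| \leq 4\, r(B_j)$, whence
\begin{equation*}
d(p_A, p_B) \leq \sum_{j=k}^{\ell-2} |p_j - p_{j+1}| \leq 4 \sum_{j=k}^{\ell-2} r(B_j).
\end{equation*}

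The remaining, and hardest, step is to bound $\sum r(B_j)$ by a universal multiple of $r(A)$. The plan is to combine the local comparability $r(B_{j+1}) \in [(1+10^{-2})^{-1},\, 1+10^{-2}]\, r(B_j)$ from Lemma~\ref{lem3.4.005} with the disjointness of balls in $\cF$, which forces $|p_j - p_{j+1}| \geq r(B_j) + r(B_{j+1})$; together with the uniform lower bound $r(B_j) \geq r(B)/3$ established above, these restrict the sub-chain length from $A$ to $B$ in terms of $\log(r(A)/r(B))$, and a geometric-series bound then yields $\sum r(B_j) \leq C\, r(A)$ for a universal constant $C$. Combining, any point of $B$ is within $d(p_A, p_B) + r(B) \leq (4C + 3)\, r(A)$ of $p_A$, and $10^4$ is chosen generously to absorb $4C + 3$. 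The main obstacle lies precisely in this sum estimate: radii along the sub-chain can oscillate, so one must interleave the geometric growth bound, the disjointness constraint, and the containment $\gamma_B \subset E$ to obtain a bound independent of the chain length.
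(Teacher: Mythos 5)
You should know at the outset that the paper does not prove this lemma at all: it is quoted as one of the ``two further geometric properties of this cover that were already proved in~\cite{Jerison1}'' (see also~\cite{SaloffCosteLMS}), so your attempt has to be measured against the standard argument. Your skeleton is the right one — reduce $B\subset 10^4A$ to a radius comparison plus a bound $d(p_A,p_B)\lesssim r(A)$ obtained by summing radii along the sub-chain — and the radius comparison is fine; in fact it is immediate from the first lemma of the appendix, since every ball of the chain has its \emph{double} meeting $\gamma_B$, whence $r(A)\geq \tfrac14 r(B)$. (Do note, however, that the centers of the chain balls do not lie on $\gamma_B$; only their doubles meet it. So ``$p_A\in\gamma_B$'' and ``$d(p_A,p_B)$ equals the arc length between them'' are not part of the construction, though these slips cost only corrections of size $2r(B_j)$.)

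The genuine gap is the step you yourself flag as hardest: the proposed mechanism for $\sum_{i\geq j} r(B_i)\leq C\,r(A)$ does not work. The three ingredients you invoke — per-step comparability $r(B_{i+1})\in[(1+10^{-2})^{-1},1+10^{-2}]\,r(B_i)$, disjointness (so $\|p_i-p_{i+1}\|_\infty> r(B_i)+r(B_{i+1})$), and the uniform lower bound $r(B_i)\geq r(B)/4$ — only force \emph{at least} $\log(r(A)/r(B))/\log(1.01)$ links per change of scale; they give no upper bound on the number of links, and they are perfectly consistent with an arbitrarily long run of consecutive balls of essentially equal radius, for which your logarithm is bounded while $\sum r(B_i)$ is not. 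So no geometric-series bound follows. What is missing — and is the actual content of Jerison's lemma — is a statement about $E$ and $\gamma_B$ themselves: because $E$ is an intrinsic ball (quasi-convex, comparable to a Euclidean ball) and $\gamma_B$ is a length-minimizing segment running from $p_B$ to the central ball $B_0$ near the center of $E$, the distance to $\pa E$ increases at a definite linear rate as one moves along $\gamma_B$ away from $p_B$; consequently the tail of $\gamma_B$ beyond the point where $2A$ first meets it has length $O(10^3 r(A))$, and only then do the Whitney relation $r(B_i)\approx 10^{-3}d(\,\cdot\,,\pa E)$ together with disjointness and the bounded-overlap property~\eqref{eqn132.005} convert this into $\sum_{i\geq j} r(B_i)\lesssim 10^3 r(A)$ and hence $d(p_A,p_B)\lesssim 10^3 r(A)$. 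This also shows the constant is tight in a way your write-up obscures: since $d(\,\cdot\,,\pa E)$ must drop from about $10^3 r(A)$ on $2A\cap\gamma_B$ to about $10^3 r(B)$ at $p_B$ at unit rate, $d(p_A,p_B)$ really can be of order $10^3 r(A)$, so $10^4$ is not ``generously absorbing'' an unquantified universal $C$ — any correct proof has to track the factor $10^3$ coming from the Whitney condition $r(B)=10^{-3}d(B,\pa E)$.
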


The remainder of the proof of Theorem~\ref{prop3.2.003} proceeds very much
as in~\cite{SaloffCosteLMS}, though our argument is a little simpler. For
consecutive balls $B_i, B_{i+1}$ in a chain, $\cF(B),$ we need to compare the
mean values $u_{4B_i}, u_{4B_{i+1}}.$
\begin{lemma}\label{lem3.6.005}
  Under the assumptions of Theorem~\ref{prop3.2.003}, there exists a
  constant $C$ independent of $\cF$ so that for any consecutive balls $B_i, B_{i+1}$ in a
chain, $\cF(B),$ for $B\in \cF$ we have the estimate
\begin{equation}
  |u_{4B_i}-u_{4B_{i+1}}|\leq
  C\frac{r(B_i)}{\sqrt{\mu_{\bb}(B_i)}}\left(\,\int\limits_{16B_i}|\nabla
  u|^2d\mu_{\bb}\right)^{\frac 12}.
\end{equation}
\end{lemma}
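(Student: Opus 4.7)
The plan is the standard telescoping-with-overlap trick. Since $B_i$ and $B_{i+1}$ are consecutive balls in the chain $\cF(B)$, Lemma~\ref{lem3.4.005} guarantees not only that their $4$-fold dilates have comparable radii and that $\mu_{\bb}(4B_i\cap 4B_{i+1})$ is comparable to $\max\{\mu_{\bb}(B_i),\mu_{\bb}(B_{i+1})\}$, but also that $B_{i+1}\subset 4B_i$. I would begin by writing
\begin{equation*}
u_{4B_i}-u_{4B_{i+1}}=\frac{1}{\mu_{\bb}(4B_i\cap 4B_{i+1})}\int_{4B_i\cap 4B_{i+1}}\bigl[(u_{4B_i}-u)+(u-u_{4B_{i+1}})\bigr]\,d\mu_{\bb},
\end{equation*}
so by Cauchy--Schwarz and the trivial inequality $(a+b)^2\leq 2(a^2+b^2)$,
\begin{equation*}
|u_{4B_i}-u_{4B_{i+1}}|^2\leq \frac{2}{\mu_{\bb}(4B_i\cap 4B_{i+1})}\sum_{j\in\{i,i+1\}}\int_{4B_j}|u-u_{4B_j}|^2\,d\mu_{\bb}.
\end{equation*}

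Next I would apply the local $L^2$-Poincar\'e inequality in Corollary~\ref{cor3.1.005} with $\kappa=4$ on each of the balls $4B_i,4B_{i+1}$. The hypothesis $\sigma\leq 10^{-2}\dist((w;y),\pa S_{n,m})$ of that corollary is satisfied because every $B\in\cF$ was constructed so that $r(B)\leq 10^{-2}\dist(B,\pa S_{n,m})$. This yields
\begin{equation*}
\int_{4B_j}|u-u_{4B_j}|^2\,d\mu_{\bb}\leq C\,r(B_j)^2\int_{4B_j}|\nabla u|^2\,d\mu_{\bb}, \qquad j\in\{i,i+1\}.
\end{equation*}

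To consolidate the right-hand side I would use Lemma~\ref{lem3.4.005} twice: first, $r(B_{i+1})\leq(1+10^{-2})r(B_i)$, so both radii are comparable to $r(B_i)$; second, $B_{i+1}\subset 4B_i$ implies $4B_{i+1}\subset 16B_i$, whence the two integrals of $|\nabla u|^2$ can both be absorbed into the single integral over $16B_i$. Combined with the overlap bound $\mu_{\bb}(4B_i\cap 4B_{i+1})\geq c\,\mu_{\bb}(B_i)$ of Lemma~\ref{lem3.4.005}, this gives
\begin{equation*}
|u_{4B_i}-u_{4B_{i+1}}|^2\leq \frac{C\,r(B_i)^2}{\mu_{\bb}(B_i)}\int_{16B_i}|\nabla u|^2\,d\mu_{\bb},
\end{equation*}
which is the desired inequality after taking square roots.

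The only mildly delicate point — the main obstacle such as it is — is checking that the Poincar\'e constant $C$ produced in Corollary~\ref{cor3.1.005} is independent of the chain, the center of the ball, and the particular place in $E$ where the ball sits. This is ensured by the uniform two-sided bounds $\beta_0\leq b_i\leq B$ together with the fact that the radii of balls in $\cF$ are comparable to their distance to $\pa S_{n,m}$, so that the constants from Proposition~\ref{prop3.2.002} can be transported via~\eqref{eqn133.005} with a single constant $C_1$.
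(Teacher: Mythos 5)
Your proposal is correct and follows essentially the same route as the paper: bound $|u_{4B_i}-u_{4B_{i+1}}|$ by $L^2$ deviations over $4B_i\cap 4B_{i+1}$ (the paper uses the triangle inequality in $L^2$ where you use Cauchy--Schwarz on the average, a cosmetic difference), apply Corollary~\ref{cor3.1.005} on the dilated balls, and absorb everything using the radius comparability, overlap bound, and inclusion $4B_{i+1}\subset 16B_i$ from Lemma~\ref{lem3.4.005}. Nothing further is needed.
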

\begin{proof} Following Saloff-Coste, we write
  \begin{equation}
    \begin{split}
      &[\mu_{\bb}(4B_i\cap 4B_{i+1})]^{\frac 12}|u_{4B_i}-u_{4B_{i+1}}|  =
\left(\,\int\limits_{4B_i\cap
    4B_{i+1}}|u_{4B_i}-u_{4B_{i+1}}|^2d\mu_{\bb}\right)^{\frac 12}\\
&\leq \left(\,\int\limits_{4B_i\cap
    4B_{i+1}}|u-u_{4B_i}|^2d\mu_{\bb}\right)^{\frac
  12}+\left(\,\int\limits_{4B_i\cap
    4B_{i+1}}|u-u_{4B_{i+1}}|^2d\mu_{\bb}\right)^{\frac 12}\\
&\leq Cr(B_i) \left(\,\int\limits_{4B_i}|\nabla
  u|^2d\mu_{\bb}\right)^{\frac 12}+Cr(B_{i+1}) \left(\,\int\limits_{4B_{i+1}}|\nabla
  u|^2d\mu_{\bb}\right)^{\frac 12}.
    \end{split}
  \end{equation}
We use Corollary~\ref{cor3.1.005} to pass from the second line to the
third. The conclusion now follows from the Lemma~\ref{lem3.4.005}. 
\end{proof}

Recall that the maximal function is defined by
\begin{equation}
  M_rf(x)=\sup_{\{B:x\in B, r(B)<r\}}\frac{1}{\mu_{\bb}(B)}\int\limits_{B}|f|d\mu_{\bb}
\end{equation}
Since $d\mu_{\bb}$ is a doubling measure, it satisfies a maximal
inequality.  For $1\leq p\leq \infty,$ and $1\leq K$ there is a constant
$C(p,K)$ so that, for all $f\in\CI_0(S_{n,m})$ 
\begin{equation}
  \|M_rf\|_{L^p(S_{n,m};d\mu_{\bb})}\leq C(p,K)\|f\|_{L^p(S_{n,m};d\mu_{\bb})}.
\end{equation}
The maximal inequality has the following remarkable consequence:
\begin{lemma}\label{lem3.7.005}
  Fix $0<R,$ $1\leq K$ and $1\leq p<\infty,$ There is a constant $C(p,K)$ so that, for
  any sequence of balls $\{B_i\}$ of radius at most $R,$  and any sequence of
  non-negative numbers $\{a_i\}$ we have the estimate
  \begin{equation}
   \left\|\sum_{i}a_i\chi_{KB_i}\right\|_{L^p(S_{n,m};d\mu_{\bb})}\leq C(p,K)
\left\|\sum_{i}a_i\chi_{B_i}\right\|_{L^p(S_{n,m};d\mu_{\bb})}.
  \end{equation}
\end{lemma}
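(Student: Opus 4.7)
The plan is to dualize the $L^p$ norm and reduce to the maximal-function inequality stated just above the lemma. The $p=1$ case is immediate: iterating the doubling property $\lceil\log_2 K\rceil$ times gives $\mu_{\bb}(KB_i)\leq C_K\mu_{\bb}(B_i),$ so
\[
\int\sum_i a_i\chi_{KB_i}\,d\mu_{\bb}=\sum_i a_i\mu_{\bb}(KB_i)\leq C_K\sum_i a_i\mu_{\bb}(B_i)=C_K\int\sum_i a_i\chi_{B_i}\,d\mu_{\bb}.
\]

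For $1<p<\infty,$ I would test against an arbitrary non-negative $g\in L^{p'}(S_{n,m};d\mu_{\bb})$ with $\|g\|_{p'}\leq 1$ and estimate $\sum_i a_i\int_{KB_i}g\,d\mu_{\bb}.$ The heart of the argument is the pointwise estimate that for every $x\in B_i,$
\[
\int_{KB_i}g\,d\mu_{\bb}\leq C\,\mu_{\bb}(B_i)\,M_{(K+1)R}g(x),
\]
where $C$ depends only on $K$ and the doubling constant $D.$ To see this, note that $x\in B_i$ forces $KB_i\subset B_{(K+1)r(B_i)}(x),$ so the average of $g$ on $KB_i$ is controlled by $\frac{\mu_{\bb}(B_{(K+1)r(B_i)}(x))}{\mu_{\bb}(KB_i)}M_{(K+1)R}g(x).$ The two balls in the displayed ratio have centers within $r(B_i)$ of each other and radii both of order $Kr(B_i),$ so repeated application of doubling bounds the ratio by a constant depending only on $K$ and $D.$ Combining with $\mu_{\bb}(KB_i)\leq C_K\mu_{\bb}(B_i)$ yields the bound.

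Integrating this pointwise estimate over $B_i$ gives $\int_{KB_i}g\,d\mu_{\bb}\leq C\int_{B_i}M_{(K+1)R}g\,d\mu_{\bb}.$ Summing over $i$ against the weights $a_i$ and applying H\"older's inequality,
\[
\sum_i a_i\int_{KB_i}g\,d\mu_{\bb}\leq C\int\Bigl(\sum_i a_i\chi_{B_i}\Bigr)M_{(K+1)R}g\,d\mu_{\bb}\leq C\Bigl\|\sum_i a_i\chi_{B_i}\Bigr\|_p\|M_{(K+1)R}g\|_{p'}.
\]
The maximal inequality stated just before the lemma then bounds $\|M_{(K+1)R}g\|_{p'}$ by $C(p',K)\|g\|_{p'}\leq C(p',K),$ and taking the supremum over admissible $g$ completes the argument.

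The main obstacle is the pointwise estimate: one must track the doubling factors needed to compare the average of $g$ over $KB_i$ with the maximal function at a nearby point, since $KB_i$ and $B_{(K+1)r(B_i)}(x)$ are not concentric and their radii differ by a factor depending on $K.$ Once these factors are absorbed into a constant depending only on $K$ and $D,$ the remainder of the proof is the standard Fefferman-Stein type duality argument.
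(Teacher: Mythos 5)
Your argument is correct. The paper itself leaves this lemma without a written proof, stating it merely as "a remarkable consequence" of the preceding maximal inequality, so there is no proof in the text to compare against; what you supply is exactly the standard Fefferman--Stein type duality argument that realizes that consequence. The three key points you identify are all sound: (i) the geometric containment $KB_i\subset B_{(K+1)r(B_i)}(x)$ for $x\in B_i$; (ii) the use of doubling, applied a bounded number of times depending only on $K$, to compare $\mu_{\bb}\bigl(B_{(K+1)r(B_i)}(x)\bigr)$ with $\mu_{\bb}(KB_i)$ and then with $\mu_{\bb}(B_i)$; and (iii) the duality $\sum_i a_i\int_{KB_i}g\,d\mu_{\bb}\le C\int\bigl(\sum_i a_i\chi_{B_i}\bigr)M_{(K+1)R}g\,d\mu_{\bb}$ followed by H\"older and $L^{p'}$-boundedness of the truncated maximal function. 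Your separate treatment of $p=1$ by a direct doubling estimate is also correct, and is in fact necessary, since the maximal function is not strong type $(1,1)$. One pedantic remark: the paper's definition of $M_r$ uses a strict inequality $r(B)<r$, so when $r(B_i)=R$ the ball $B_{(K+1)r(B_i)}(x)$ is not admissible for $M_{(K+1)R}$; replacing $M_{(K+1)R}$ by, say, $M_{2(K+1)R}$ fixes this without changing anything else. Overall this is the right proof.
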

\begin{proof}[Proof of Theorem~\ref{prop3.2.003}] We can now give the final
  estimates needed to prove the theorem. Recall that $E=B_r(w_0;y_0)$ is a
  ball in $S_{n,m}$ and $\cF$ is a Jerison ``covering'' as described above. In
  fact the balls in $\cF$ are disjoint and $\{2B:\: B\in\cF\}$ is a
  covering. We let $B_0$ be a ball in $\cF$ so that $(w_0;y_0)\in 2B_0.$ For
  any ball $B\in\cF,$ we let $\cF(B)=(B_0,B_1,\dots,B_{l(B)-1})$ denote a chain
  joining $B=B_{l(B)-1}$ to $B_0.$ 

As $E=\cup_{B\in\cF}2B,$ we see that
\begin{equation}
\begin{split}
  \int\limits_{E}|u-u_{4B_0}|^2&d\mu_{\bb}\leq \sum\limits_{B\in\cF}\int\limits_{2B}|u-u_{4B_0}|^2d\mu_{\bb}
\\
&\leq 4\sum\limits_{B\in\cF}\int\limits_{4B}(|u-u_{4B}|^2+|u_{4B}-u_{4B_0}|^2)d\mu_{\bb}\\
&4\sum\limits_{B\in\cF}\left[Cr(4B)^2\int\limits_{4B}|\nabla
  u|^2d\mu_{\bb}+|u_{4B}-u_{4B_0}|^2d\mu_{\bb}(4B)\right].\end{split}
\end{equation}
We use Corollary~\ref{cor3.1.005} to pass from the second to the third
line. Since $4B\subset E,$ we can use~\eqref{eqn132.005} to conclude that there
is a constant $C_0$ so that
\begin{equation}
  \sum\limits_{B\in\cF}r(4B)^2\int\limits_{4B}|\nabla u|^2d\mu_{\bb}
\leq C_0r^2\int\limits_{E}|\nabla u|^2d\mu_{\bb}.
\end{equation}
The next step is to establish a similar estimate for
\begin{equation}
 I= \sum\limits_{B\in\cF}\int\limits_{4B}|u_{4B}-u_{4B_0}|^2d\mu_{\bb}.
\end{equation}

Using Lemma~\ref{lem3.6.005} we obtain the estimate
\begin{equation}
  |u_{4B}-u_{4B_0}|\leq \sum_{i=0}^{l(B)-1}|u_{4B_i}-u_{4B_{i+1}}|\leq
C\sum_{i=0}^{l(B)-1}\frac{r(B_i)}{\sqrt{\mu_{\bb}(B_i)}}\left(\,\int\limits_{16B_i}
|\nabla u|^2d\mu_{\bb}\right)^{\frac 12}.
\end{equation}
According to Lemma~\ref{lem3.5.005} the ball $B$ is contained in $10^4 B_i$ for
any $i$ and therefore
\begin{equation}
  |u_{4B}-u_{4B_0}|\chi_B\leq
C\sum\limits_{A\in\cF}\frac{r(A)}{\sqrt{\mu_{\bb}(A)}}\left(\,\int\limits_{16A}
|\nabla u|^2d\mu_{\bb}\right)^{\frac 12}\chi_{10^4 A}\chi_{B}.
\end{equation}
As the balls in $\cF$ are disjoint, we see that
\begin{equation}
 \sum_{B\in\cF} |u_{4B}-u_{4B_0}|^2\chi_B\leq
C\left|\sum\limits_{A\in\cF}\frac{r(A)}{\sqrt{\mu_{\bb}(A)}}\left(\,\int\limits_{16A}
|\nabla u|^2d\mu_{\bb}\right)^{\frac 12}\chi_{10^4 A}\right|^2.
\end{equation}
Once again, since the balls in $\cF$ are disjoint, we can apply
Lemma~\ref{lem3.7.005} to conclude that there is a constant $C_2$ so that
\begin{equation}
\begin{split}
 \int\limits_{E}\sum_{B\in\cF} |u_{4B}-u_{4B_0}|^2\chi_Bd\mu_{\bb}&\leq
C_2\int\limits_{E}\left|\sum\limits_{A\in\cF}\frac{r(A)}{\sqrt{\mu_{\bb}(A)}}\left(\,\int\limits_{16A}
|\nabla u|^2d\mu_{\bb}\right)^{\frac 12}\chi_{A}\right|^2d\mu_{\bb}\\
&\leq
C_2\int\limits_{E}\sum\limits_{A\in\cF}\frac{r^2(A)}{\mu_{\bb}(A)}\left(\,\int\limits_{16A}
|\nabla u|^2d\mu_{\bb}\right)\chi_{A}d\mu_{\bb}.
\end{split}
\end{equation}
The key point here is that, in the first line,  we have replaced the characteristic functions
$\{\chi_{10^4 A}\}$ in the inner sum with $\{\chi_{A}\}.$  We
pass to the second line by using the fact that the balls in $\cF$ are
disjoint. Using the doubling property one last time it follows that
\begin{equation}
  I\leq Cr^2\int\limits_{E}|\nabla u|^2d\mu_{\bb}.
\end{equation}

Combining our results thus far, we have shown that that there is a constant
$C'$ so that
\begin{equation}
  \int\limits_{E}|u-u_{B_0}|^2d\mu_{\bb}\leq C'r^2\int\limits_{E}|\nabla
  u|^2d\mu_{\bb}.
\end{equation}
The fact that the integral $\int_E|u-a|^2d\mu_{\bb},$ where $a\in\bbR,$ is minimized when
$a=u_{E},$ and the uniform equivalence of the metrics $\rho^e_{\infty}$ and
$\rho_i$ complete the proof of the theorem.
\end{proof}

\section{Lemmas for Section~\ref{sec4.2.001}}\label{appC2}
In this section we give analogues for Theorem 5.2.9, Theorem 5.2.16, Theorem
5.2.17 and Lemma 5.4.1 in~\cite{SaloffCosteLMS}. These are the ingredients
needed to apply Saloff-Coste's proof of the Harnack inequality for non-negative solutions to
$\pa_tu-(L_Q-V_x-c)u=0,$ and certain estimates for the heat kernel. These
results are largely consequences of the following two estimates: 1. for balls $B^i_{r}(x;y)$
we have the doubling estimate:
 \begin{equation}
    \mu_{\bb}(B^i_{2r}(x;y))\leq 2^D\mu_{\bb}(B^i_{r}(x;y)),
  \end{equation}
and 2. for all functions in $\cD(Q_{B^i_r(x;y)})$ we have the Sobolev inequality:
  \begin{multline}\label{SobIneq.2}
    \left[\int\limits_{B^i_r(x;y)}|u|^{\frac{2D}{D-2}}d\mu_{\bb}(\tx,\ty)\right]^{\frac{D-2}{D}}\leq\\
C_S\frac{r^2}{[\mu_{\bb}(B^i_r(x;y))]^{\frac 2D}}\left[Q_{B^i_r(x;y)}(u,u)+\frac{1}{r^2}(u,u)_{\bb}\right].
  \end{multline}

We first begin with an argument due to Moser, and appearing, in the elliptic
case, as part of the proof of Lemma 2.2.1 in~\cite{SaloffCosteLMS}, to show
that weak nonnegative subsolutions to the equation $\pa_tu=(L_Q-V_X-c)u$ are
locally bounded. We allow the vector field $V_X$ and the scalar potential $c$
to be somewhat singular. Assume that there is a $k\in\bbN,$ and a constant $M$
so that the coefficients satisfy the estimate in~\eqref{eqn220.012}.  This
generality is essential for the applications to population genetics. We give a
fairly detailed proof of this statement.

For $0<r,\delta,$ $s\in\bbR,$ and $q\in S_{n,m}$ we let
\begin{equation}
  W_r(s,q)=(s-r^2,s)\times B^i_r(q),\text{ and }W(\delta)=(s-\delta
r^2,s)\times B^i_{\delta r}(q).
\end{equation}
\begin{lemma}\label{lemB1.010}
  Assume that $\bb=(b_1,\dots,b_n)$ are positive differentiable functions of $(x;y).$ Suppose that
  $X(x;y)\in\cC^0(\Int S_{n,m};\bbR^{n+m})$ is
  constant outside of a compact set, and $c(x;y)$ is a measurable function
  supported in a compact set, both of which satisfy~\eqref{eqn220.012}.  There is a
  constant $C_1$ that depends only on the doubling dimension, $D,$ so that with
  $0<\delta<1,$ and $r<R,$ for $u$ a non-negative weak subsolution of
\begin{equation}\label{eqn222.004}
  \pa_tu=(L_Q-V_X-c)u
\end{equation}
 in $W_r(s,q),$  we have the
estimates, for $0<p:$
\begin{equation}\label{eqn245.009}        
  \sup_{W(\delta)}u^p\leq \frac{C_1}{(1-\delta)^{D+2}r^2\mu_{\bb}(B_r^i(q))}
\iint\limits_{W(1)}u^pd\mu_{\bb}dt.
\end{equation}
In particular, a weak subsolution is bounded for positive times.
\end{lemma}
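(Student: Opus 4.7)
The plan is to implement the standard parabolic Moser iteration, adapted to handle the mildly singular lower order terms via Lemma~\ref{lemB2.2.0101}. First, I would test the weak form of the differential inequality $\partial_t u \leq (L_Q - V_X - c)u$ against $\varphi^2 u^{p-1}$, where $\varphi(t, x; y) = \psi(t)\chi(x;y)$ is a non-negative product cutoff that is supported in $W_r(s,q)$ and equal to $1$ on a somewhat smaller cylinder. Exploiting the identity $\nabla u \cdot u^{p-1} = \tfrac{2}{p}\, u^{p/2}\nabla u^{p/2}$ and the Cauchy--Schwarz/arithmetic-geometric-mean inequality on the non-symmetric drift piece $\langle A\nabla u, X\varphi^2 u^{p-1}\rangle$, I would obtain an inequality of the form
\begin{equation*}
\partial_t \int \varphi^2 u^p \, d\mu_{\bb} + \frac{c_0}{p}\int \varphi^2\langle A\nabla u^{p/2},\nabla u^{p/2}\rangle \, d\mu_{\bb} \leq C\!\int\!\bigl[\langle A\nabla\varphi,\nabla\varphi\rangle + \varphi|\partial_t\varphi|\bigr] u^p\, d\mu_{\bb} + \int (\sigma\varphi^2) u^p\, d\mu_{\bb},
\end{equation*}
where $\sigma = |X|_A^2 + |c|$ satisfies a logarithmic bound of the type \eqref{eqn220.012}.

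Next, I would invoke Lemma~\ref{lemB2.2.0101} with the function $\varphi u^{p/2}$ to absorb the $\sigma$-term: given $\eta$ small, the singular contribution is bounded by $\eta \int \langle A\nabla(\varphi u^{p/2}),\nabla(\varphi u^{p/2})\rangle d\mu_{\bb} + C_\eta \int \varphi^2 u^p d\mu_{\bb}$, and the leading part gets absorbed into the good gradient term on the left. After this absorption, integrating in time and supping over $t$ yields the standard ``Caccioppoli-type'' estimate
\begin{equation*}
\sup_t \int \varphi^2 u^p \, d\mu_{\bb} + \int\!\!\int \varphi^2 \langle A\nabla u^{p/2},\nabla u^{p/2}\rangle\, d\mu_{\bb}\, dt \leq C\!\int\!\!\int_{W_r}\bigl[\langle A\nabla\varphi,\nabla\varphi\rangle + \varphi|\partial_t\varphi| + \varphi^2\bigr] u^p\, d\mu_{\bb}\, dt.
\end{equation*}

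Then, applying the Sobolev inequality \eqref{SobIneq.2} to $\varphi u^{p/2}(\cdot, t)$ and interpolating with the $L^\infty_t L^2_x$ bound, one passes via H\"older to an $L^{p\theta}$-norm on a slightly smaller cylinder, where $\theta = 1 + 2/D$. Choosing a sequence $\delta = \delta_0 < \delta_1 < \cdots \to 1$ corresponding to cutoffs $\varphi_k$ with $|\nabla \varphi_k|\lesssim [(\delta_{k+1}-\delta_k)r]^{-1}$ and $|\partial_t \varphi_k|\lesssim [(\delta_{k+1}-\delta_k)r]^{-2}$, and iterating with powers $p_k = p\, \theta^k$, the telescoping product of constants converges (because $\sum \theta^{-k}\log(\text{geometric}) < \infty$), giving \eqref{eqn245.009} for any $p > 0$ after the standard truncation/John--Nirenberg style self-improvement that extends the estimate from the range $p \geq p_0$ produced directly by iteration down to all $p > 0$. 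The factor $(1-\delta)^{-(D+2)}$ tracks the dependence on the gap between shrinking cylinders.

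The main obstacle is the logarithmically singular lower order coefficients: the clean Moser machinery requires bounded drift and potential, and Lemma~\ref{lemB2.2.0101} is precisely what makes absorption possible, but one must be careful that the constants $C_\eta$ arising from each invocation do not accumulate uncontrollably during the iteration. This is handled because at each step the singular term is absorbed once with a fixed small $\eta$ (independent of $k$) and produces only an additive $C_\eta\int \varphi_k^2 u^{p_k} d\mu_{\bb}$ contribution, which is of the same form as the good ``+ $\varphi^2$'' term already present and therefore does not worsen the iteration geometry. A secondary technical point is the treatment of the time derivative in the weak formulation: since $u$ need not have $\partial_t u$ as a function, one should formally regularize in time (e.g. using Steklov averages) and pass to the limit, just as in the classical Moser argument.
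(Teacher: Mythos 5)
Your outline follows the same route as the paper's proof: Moser iteration in which the logarithmically singular drift and potential are absorbed through the integration-by-parts lemma, followed by the Sobolev inequality \eqref{SobIneq} and the standard telescoping over exponents $p_k=p\theta^k$. However, as written the absorption step fails for large exponents. In your Caccioppoli inequality the coercive term appears with coefficient $c_0/p$, while Lemma~\ref{lemB2.2.0101}, applied to $\varphi u^{p/2}$, produces $\eta\int\langle A\nabla(\varphi u^{p/2}),\nabla(\varphi u^{p/2})\rangle\,d\mu_{\bb}$; if $\eta$ is ``fixed independent of $k$,'' this term can no longer be hidden in the left-hand side once $p_k$ exceeds roughly $c_0/(2\eta)$, so the iteration as described breaks down after finitely many steps. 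The repair is exactly the normalization used in the paper following \cite{Moser2}: substitute $v=u^{p/2}$ and scale the tested identity so that the gradient term enters with a $p$-independent coefficient ($\epsilon=\min\{\tfrac14|1-\tfrac1p|,\tfrac14\}$, cf.~\eqref{eqn273.011}); alternatively you must take $\eta\sim 1/p_k$ and check that the resulting $C_\eta$ grows at most polynomially in $p_k$, which you have not done.

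The second gap is the admissibility of $\varphi^2u^{p-1}$ as a test function. A weak subsolution is only known to lie in $\Dom(Q)\subset L^2$, so for $p>2$ the function $u^{p-1}$ need not be integrable, let alone lie in the form domain; your Steklov-average remark addresses only the time derivative, and your mention of ``standard truncation'' is attached to the passage to small $p$, not to this issue. The paper devotes real machinery to it: the initial step is run with Moser's truncated functions $G_N,H_N,K_N$ (with $G_N$ linear at infinity), so that $\psi^2G_N(u)$ is admissible, and the singular coefficients are then absorbed by the corresponding Lemma~\ref{lemB2.010} for $uG(u)$ rather than Lemma~\ref{lemB2.2.010}. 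Only after the $p=2$ case yields local boundedness does one test with $u^{p-1}\psi^2$, since then $u^p\in\cD(Q)$. Note also that because of the lower order terms one cannot argue, as in the uniformly elliptic case, that $u^p$ is again a subsolution; this is precisely why the paper uses the two-stage structure (and the separate argument of Theorem 2.2.3 of \cite{SaloffCosteLMS} for $0<p<2$), whereas your single sweep over all $p>0$ glosses over both points.
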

\begin{remark} This is part of Theorem 2.1 in~\cite{Sturm2}
  in a more general context, but one that would require additional hypotheses
  in the present circumstance.
\end{remark}
\begin{proof}
  Recall that $u$ is a weak subsolution in $W$ if $u\in\Dom(Q)$ and for any
  non-negative function in $\Dom(Q)$ with compact support in $W,$  we have, for
  a.e. $t,$ that
  \begin{equation}
    \int\limits_{B}[u_t\varphi+\langle A\nabla u,\nabla \varphi\rangle+
\langle A\nabla u,X \varphi\rangle+cu\varphi]d\mu_{\bb}(x;y)\leq 0.
  \end{equation}
Moser's trick to prove this lemma is to use test functions $\varphi$ of the form $\varphi=\psi^2 G(u).$ 
Here $\psi\in\cC^{\infty}_c(W),$ and $G:\bbR_+\to\bbR_+$ is a piecewise $\cC^1$
function that satisfies the conditions:
\begin{enumerate}
\item $G(r)=ar,$ $a$ a positive constant, when $r$ is sufficiently large.
\item $G'(r)$ is non-negative and non-decreasing, which implies
\item $G(r)\leq rG'(r).$
\end{enumerate}
We then set $H'(r)=\sqrt{G'(r)},$ with $H(0)=0.$ Once again, the mean value
theorem implies that $H(r)\leq rH'(r).$ Finally we set
\begin{equation}
  K(s)=\int\limits_{0}^r G(\rho)d\rho,
\end{equation}
and assume that there is a universal constant, $C_0,$ so that 
\begin{equation}\label{eqn225.010}
H^2(r)\leq C_0 K(r),
\end{equation}
 for $r\in [0,\infty).$ 

Since $G(r)$ grows linearly for large $r,$ the function $\varphi=\psi^2 G(u)$
is an admissible test function, and therefore:
\begin{equation}\label{eqn226.011}
    \int\limits_{B}[\pa_tK(u)\psi^2+\langle A\nabla u,\nabla \psi^2 G(u)\rangle+
\langle A\nabla u,X \psi^2G(u)\rangle+cu\psi^2G(u)]d\mu_{\bb}(x;y)\leq 0.
  \end{equation}
Using the argument on page 39 of~\cite{SaloffCosteLMS} we can show that this
implies that there is a constant $C_1$ so that
\begin{multline}\label{eqn226.010}
    \int\limits_{B}[\pa_tK(u)\psi^2+\frac{1}{4}\langle A\nabla\psi H(u),\nabla
    \psi H(u)\rangle]d\mu_{\bb}\leq \\
C_1\int\limits_{B}[(\langle A\nabla\psi,\nabla\psi\rangle +\psi^2)u^2G'(u)+
(\langle AX,X\rangle+|c|) \psi^2uG(u)]d\mu_{\bb}.
\end{multline}
From this point the argument would be standard, but for the fact that the
$(\langle AX,X\rangle+|c|)$-term is not required to be bounded near the
boundary. Below we prove a lemma that allows us to handle the contribution from
near the singular locus. 

Following Moser, and Saloff-Coste, we take:
\begin{equation}
  H_N(r)=\begin{cases} &r^{\frac{p}{2}}\text{ for }0\leq r\leq N\\
&N^{\frac{p}{2}-1}r\text{ for } r> N,\end{cases}
\end{equation}
which implies that
\begin{equation}
  G_N(r)=\begin{cases} &\frac{p^2}{4(p-1)}r^{p-1}\text{ for }0\leq r\leq N\\
&N^{p-2}(r-N)+\frac{p^2}{4(p-1)}N^{p-1}\text{ for } r> N,\end{cases}
\end{equation}
and
\begin{equation}
  K_N(r)=\begin{cases} &\frac{p}{4(p-1)}r^{p}\text{ for }0\leq r\leq N\\
&\frac{N^{p-2}}{2}(r-N)^2+\frac{p^2}{4(p-1)}N^{p-1}(r-N)+\frac{p}{4(p-1)}N^{p}\text{ for } r> N.\end{cases}
\end{equation}

We now show that there is constant $C_0$ independent of $N$ and $2\leq p,$ so
that
\begin{equation}\label{eqn230.010}
  \frac{K_N(r)}{H_N^2(r)}\geq C_0.
\end{equation}
For $0\leq r\leq N,$ we have that
\begin{equation}
  \frac{K_N(r)}{H_N^2(r)}=\frac{p}{4(p-1)}\geq \frac{1}{4}.
\end{equation}
For $r>N,$ we let $r=N\rho,$ and obtain that
\begin{equation}
  \frac{K_N(N\rho)}{H_N^2(N\rho)}=
\frac{\frac{(\rho-1)^2}{2}+\frac{p^2}{4(p-1)}(\rho-1)+\frac{p}{4(p-1)}}{\rho^2},
\end{equation}
from which is it clear that the minimum does not depend on $N.$ A simple
calculation shows that the minimum on $[1,\infty)$ is assumed at $\rho=1,$ and
therefore~\eqref{eqn230.010} holds for $2\leq p,$ with $C_0=\frac{1}{4}.$

The main new result is in the following lemma:
\begin{lemma}\label{lemB2.010} Assume that $\bb=(b_1,\dots,b_n)$ are positive differentiable
  functions of $(\bx;\by),$ with $0<\beta_0<b_j,$ constant outside a compact set. Let $q$ be a
  measurable function defined on $S_{n,m}$ that satisfies
\begin{equation}\label{eqn220.0122}
    |q(x;y)|\leq M\left[\chi_{B}(x;y)\sum_{j=1}^n|\log x_i|^k+1\right].
  \end{equation}
for some $k\in\bbN, 0<M,$ and  $B$ a bounded neighborhood of $(0;0),$ 
Given $\eta>0$ there  is a $C_{\eta}$ so that  we have
\begin{multline}\label{eqn263.012}
  \int\limits_{S_{n,m}}|q(x;y)|u G(u)d\mu_{\bb}\leq
 \eta\int\limits_{S_{n,m}}\langle A\nabla u,\nabla
 u \rangle G'(u) d\mu_{\bb}+\\
C_{\eta} \int\limits_{S_{n,m}}u^2G'(u)d\mu_{\bb},
\end{multline}
for $u$ a bounded, non-negative, compactly supported  function in $\Dom(Q).$
\end{lemma}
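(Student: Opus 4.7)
The plan is to reduce to a single logarithmic factor in a single $x_j$ and then integrate by parts, mirroring the strategy of Lemma~\ref{lemB2.2.010} but with $uG(u)$ in place of $\chi^2 u^2$. First I would split $|q|$ into $M$ times the constant part plus a sum of singular pieces $\chi_B(x;y)|\log x_j|^k$; the constant part contributes only $\int u G(u) d\mu_{\bb} \leq \int u^2 G'(u) d\mu_{\bb}$, using the hypothesis $G(r) \leq r G'(r)$ that is imposed on $G$ in the proof of Lemma~\ref{lemB1.010}. So it suffices to control, for each $j$, a single term of the form $\int_{S_{n,m}} \chi_B(x;y)|\log x_j|^k u G(u) d\mu_{\bb}$.

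Next I would pass to the square-root coordinates $w_j = \sqrt{x_j}$, in which the quadratic form becomes uniformly elliptic in $\nabla_{w,y}$ and the measure becomes $2^n \prod w_i^{2b_i-1} dw\,dy$, while $\log x_j = 2\log w_j$. The workhorse identity is
\begin{equation}
w_j^{2b_j(w;y)-1}\log w_j \;=\; \frac{1}{2b_j(w;y)}\,\partial_{w_j}\bigl[w_j^{2b_j(w;y)}\log w_j\bigr] \;-\; \frac{w_j^{2b_j(w;y)-1}}{2b_j(w;y)} \;+\; R_j,
\end{equation}
where $R_j$ collects terms proportional to $\partial_{w_j} b_j \cdot w_j^{2b_j}(\log w_j)^2$ coming from differentiating the exponent $2b_j(w;y)$; since $|\partial b_j|$ is bounded and $w_j^{2b_j}(\log w_j)^2$ is bounded on the support of $\chi_B$ (using $b_j \geq \beta_0>0$), $R_j$ contributes at most $C w_j^{2b_j-1}|\log w_j|^{k-1}$-like terms that feed into the induction on $k$. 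Because $b_j \geq \beta_0 > 0$, the boundary factor $w_j^{2b_j}(\log w_j)$ vanishes at $w_j = 0$, and because $u$ is compactly supported there is no boundary contribution from integration by parts.

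After integrating the derivative term against $uG(u)$ times the remaining factors of the measure, the main piece becomes
\begin{equation}
\frac{1}{2b_j}\int w_j^{2b_j}\log w_j \cdot \partial_{w_j}[uG(u)] \cdot \prod_{i\neq j} w_i^{2b_i-1}\,2^n dw\,dy.
\end{equation}
Using $(uG(u))' = G(u) + uG'(u) \leq 2u G'(u)$ together with the boundedness of $w_j |\log w_j|$ on the support of $\chi_B$, this is dominated by $C\int u\, G'(u)\, |\partial_{w_j} u|\, d\mu_{\bb}$. Cauchy--Schwarz and the uniform ellipticity \eqref{eqn88.006} (which yields $|\partial_{w_j} u|^2 \leq \lambda^{-1}\langle A\nabla u,\nabla u\rangle$) give the upper bound
\begin{equation}
C'\Bigl(\int u^2 G'(u) d\mu_{\bb}\Bigr)^{1/2}\Bigl(\int \langle A\nabla u,\nabla u\rangle G'(u) d\mu_{\bb}\Bigr)^{1/2},
\end{equation}
and Young's inequality $ab \leq \eta b^2 + \tfrac{1}{4\eta}a^2$ completes the estimate for $k=1$. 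The case $k>1$ follows by induction: the same identity lets me replace $(\log w_j)^k$ with a derivative plus a constant multiple of $(\log w_j)^{k-1}$, so the general result comes from finitely many iterations.

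The main obstacle will be the bookkeeping for the non-constant weights $b_j(w;y)$: every differentiation of $w_j^{2b_j(w;y)}$ produces an extra factor $\log w_j$, which threatens to raise the power of $\log$ rather than lower it. The key observation that makes the induction close is that these extra factors always appear multiplied by a \emph{positive} power of $w_j$ (and by the bounded quantity $|\partial b_j|$), so they can be absorbed into $R_j$ as bounded perturbations rather than becoming genuinely singular. Once this bookkeeping is handled, no further structural input beyond $b_j \geq \beta_0 > 0$, compactness of $\supp \chi_B$, and the hypothesis $G(r) \leq rG'(r)$ is needed.
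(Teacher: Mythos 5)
Your reduction to a single logarithmic factor and the integration by parts in $w_j$ is in the right spirit, but there is a genuine gap in how the derivative falls on the measure. When you move $\pa_{w_j}$ off $w_j^{2b_j(w;y)}\log w_j$, it does not only hit $uG(u)$ and the exponent of $w_j$ itself: because every weight $b_i(w;y)$, $i\neq j$, and the prefactor $1/(2b_j(w;y))$ also depend on $w_j$, the integration by parts produces terms of the form
\begin{equation}
\frac{w_j^{2b_j}\log w_j}{2b_j}\,\Big(\sum_{i\neq j}2\,\pa_{w_j}b_i\,\log w_i\Big)\,uG(u)\prod_{i\neq j}w_i^{2b_i-1},
\end{equation}
i.e.\ singular factors $\log w_i$ in the \emph{other} variables. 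Your key observation (extra logs always come with a positive power of $w_j$) disposes of the diagonal term $\pa_{w_j}b_j\,w_j^{2b_j}(\log w_j)^{k+1}$, but it does not help here: relative to the measure the coefficient is $w_j\log w_j\,\pa_{w_j}b_i$, which is bounded but not small on $\supp\chi_B$, while $\log w_i$ is fully singular as $w_i\to 0$. For $k=1$ these regenerated terms are of exactly the same order as the quantity you set out to bound, so the proposed induction on $k$ does not close, and no amount of bookkeeping of the diagonal terms fixes this.

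The missing idea is a localization-plus-absorption step. In the paper's proof one first dominates $M[\sum_i|\log x_i|^k+1]$ by an arbitrarily small multiple $\eta'\sum_i x_i^{-a}$ (with $0<a<\min\{\beta_0/2,1/4\}$) on a sufficiently small neighborhood of a boundary point, freezes the weights to $\ub_i$ independent of $x_i$ there, integrates by parts against $x_i^{\ub_i-a}$, and then shrinks the neighborhood once more so that the regenerated cross terms $\sum_{k\neq i}|\pa_{x_i}\ub_k\log x_k|$ are at most half of $\sum_i x_i^{-a}$ and can be reabsorbed into the left-hand side; a partition of unity over $\pa S_{n,m}\cap B$ and the boundedness of $q$ away from the boundary then give the global statement. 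You could rescue your $w$-coordinate computation along the same lines (restrict to $w_j$ small so that $w_j^2|\log w_j|\,|\pa_{x_j}b_i|$ is small, absorb the regenerated $\log w_i$ terms, and cover the rest of $\supp\chi_B$ where all logs are bounded), but some such localization and self-absorption argument must be supplied; as written, the claim that only $b_j\geq\beta_0>0$, compact support, and $G(r)\leq rG'(r)$ are needed is not correct.
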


We give the proof of the lemma below. With this bound we can estimate the
contribution of the last term on the right hand side of~\eqref{eqn226.010} near
the boundary and thereby show that there is a constant, independent of $2\leq
p,$ and $N$ so that
\begin{multline}
    \int\limits_{W}[\pa_tK_N(u)\psi^2+\langle A\nabla \psi H_N(u),\nabla \psi
    H_N(u)\rangle] d\mu_{\bb}\leq\\
C \int\limits_{W}[\langle A\nabla \psi,\nabla\psi\rangle+\psi^2]u^2G_N'(u)d\mu_{\bb}.
  \end{multline}
We  first argue as on page 40, and then as on page 121-2, using the fact
that $H_N(u)\leq 4K_N(u).$  Letting $N\to\infty$ we conclude that,
with $\theta=1+2/D,$ we have:
\begin{equation}
  \iint\limits_{W(\delta)}u^{p\theta}d\mu_{\bb}dt\leq
  \frac{C}{(\delta'-\delta)^{2+D}r^2\mu_{\bb}(B_r)}
\left(\frac{A p^2}{r^2(\delta'-\delta)^{2}}\iint\limits_{W(\delta')}u^pd\mu_{\bb}dt\right)^{\theta}
\end{equation}
Starting with $p=2$ we can iterate this inequality to conclude that $u^p$ is
integrable for any $2\leq p,$ and then apply the argument on page 122
of~\cite{SaloffCosteLMS} to complete the proof of Lemma~\ref{lemB1.010} for
$p=2.$

In~\cite{SaloffCosteLMS} the fact that $u^p$ is a subsolution if $p>1$ is
employed to use the argument above to complete the proof of the lemma. Since we
are allowing lower order terms, we cannot use this argument and use instead an
argument given in~\cite{Moser2}.  We do not give the complete proof, but
demonstrate that an exact analogue of the last formula on page 737
of~\cite{Moser2} holds in the present context. From that point onward, the
conclusion then follows, as in Moser, by employing the Sobolev
inequality~\eqref{SobIneq}, which holds for $Q_B.$ From the $p=2$ case we can
assume that $u$ is bounded, and therefore $u^p\in\cD(Q)$ if $p>1.$

We  let $\varphi=u^{p-1}\psi^2$ in~\eqref{eqn226.011}.
Here $\psi$ is a smooth function that is $1$ in $W(\delta)$ and zero outside
of $W(1).$ A little algebra shows that the condition
in~\eqref{eqn226.011} can be re-expressed as:
\begin{multline}
  \int\left[\frac{1}{4}\pa_t(v^2)\psi^2+
\left(1-\frac{1}{p}\right)\langle A\nabla v,\nabla
v\rangle\psi^2\right]d\mu_{\bb}=\\
-\int\left[\langle A\nabla v,\nabla
\psi\rangle v\psi+\frac{1}{2}\langle A\nabla v, X\rangle v\psi^2+cv^2\psi^2\right]d\mu_{\bb},
\end{multline}
where $v=u^{\frac p2}.$

We let 
\begin{equation}
  \epsilon=\min\left\{\frac{1}{4}\left|1-\frac 1p\right|,\frac{1}{4}\right\}.
\end{equation}
The Cauchy-Schwarz and arithmetic-geometric inequalities show that
\begin{equation}
  |\langle A\nabla v,\nabla \psi\rangle v\psi|\leq
\frac{1}{4\epsilon}\langle A\nabla \psi,\nabla
\psi\rangle v^2 +\epsilon\langle A\nabla v,\nabla v\rangle \psi^2
\end{equation}
and
\begin{equation}
  |\langle A\nabla v,X\rangle v\psi^2|\leq
\frac{1}{4\epsilon}\langle A X,X\rangle \psi^2v^2 +
\epsilon\langle A\nabla v,\nabla v\rangle \psi^2.
\end{equation}

This demonstrates that, for $2<p,$
\begin{multline}
  \frac{1}{4}\int \pa_t(\psi^2v^2)d\mu_{\bb}+2\epsilon\int\langle
  A\nabla v,\nabla v\rangle \psi^2d\mu_{\bb}\\
\leq \frac{1}{4\epsilon}\int\left[\langle A\nabla \psi,\nabla
\psi\rangle +\langle A X,X\rangle \psi^2+4\epsilon |c|\psi^2\right]v^2d\mu_{\bb}+\\
\frac{1}{2}\int |\psi\psi_t|v^2d\mu_{\bb}.
\end{multline}
From this point the argument goes very much as in the $p=2$ case. In
particular, we use Lemma~\ref{lemB2.2.010} to control
the $q\psi^2v^2$ term, where $q=[\langle A X,X\rangle+4\epsilon |c|].,$   obtaining the estimate
\begin{multline}\label{eqn273.011}
  \frac{1}{4}\int \pa_t(\psi^2v^2)d\mu_{\bb}+\epsilon\int\langle
  A\nabla v,\nabla v\rangle \psi^2d\mu_{\bb}\\
\leq \frac{C}{4\epsilon}\int\left[\langle A\nabla \psi,\nabla
\psi\rangle +\psi^2\right]v^2d\mu_{\bb}+
\frac{1}{2}\int |\psi\psi_t|v^2d\mu_{\bb}.
\end{multline}
After integrating in $t$ this is essentially the same as the estimate at the bottom
of page 737 in~\cite{Moser2}, completing the proof of the lemma for $2\leq p.$

To obtain the estimate for $0<p<2$ we employ the argument used to prove Theorem
2.2.3 in~\cite{SaloffCosteLMS}.
\end{proof}
We still need to prove Lemma~\ref{lemB2.010}:
\begin{proof}[Proof of Lemma~\ref{lemB2.010}]
  If $q$ were bounded, i.e. $B=\emptyset,$ then the estimate
  in~\eqref{eqn263.012}, with $\eta=0,$ would follow from the fact that
  $G(u)\leq uG'(u).$ To treat the case where $B\neq\emptyset,$ we begin with a
  local version of the lemma:
\begin{lemma}\label{lemB2.01022} Assume that $\bb=(b_1,\dots,b_n)$ are positive differentiable
  functions of $(x;y),$ with $0<\beta_0<b_i,$ constant outside a compact
  set. Let $q$ be a measurable function defined on $S_{n,m}$ so that for some
  $M, B$ and $0<k$ it satisfies the estimate~\eqref{eqn220.0122}.
  Let $(x_0;y_0)\in\pa S_{n,m}.$ Given $\eta>0$ there is a open
  neighborhood 
$$U_{\delta}(x_0;y_0)=\{(x;y)\in S_{n,m}:\: |x_i-x_{0i}|<\delta,\, y_j\in (-1,1)\},$$  
so that if $\supp\chi\subset U_{\delta}(x_0;y_0),$ then there is a
$C_{\eta},$ independent of $u, G,$ and $\chi$ so that
\begin{multline}\label{eqn275.1.012}
  \int\limits_{S_{n,m}}\chi^2(x;y)|q(x;y)|uG(u)d\mu_{\bb}\leq
 \eta\int\limits_{S_{n,m}}\langle A\nabla u,\nabla
 u\rangle G'(u)\chi^2d\mu_{\bb}+\\
C_{\eta} \int\limits_{S_{n,m}}[\langle A\nabla \chi,\nabla\chi\rangle+\chi^2]u^2G'(u)d\mu_{\bb},
\end{multline}
for $u$ a non-negative function in $\Dom(Q).$
\end{lemma}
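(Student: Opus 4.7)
The plan is to prove Lemma~\ref{lemB2.01022} as a weighted Hardy-type inequality, via integration by parts in each boundary-normal coordinate $x_j$ with $x_{0j} = 0$. First I would split $|q| \leq M + M\chi_B \sum_{j=1}^n |\log x_j|^k$: the bounded contribution $M\int \chi^2 uG(u)\,d\mu_{\bb}$ is dominated by $M\int \chi^2 u^2 G'(u)\,d\mu_{\bb}$ via the convexity identity $G(u) \leq uG'(u)$ and thus absorbed into the $C_\eta$ term. Indices $j$ with $x_{0j} > 0$ also fall into the bounded case once $\delta < x_{0j}/2$, so the real work is to control $\int \chi^2 |\log x_j|^k uG(u)\,d\mu_{\bb}$ for each $j$ with $x_{0j} = 0$.

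To handle such a $j$, I would freeze the weight in the $x_j$ direction by setting $\tilde b_j(x',y) := b_j(0,x',y)$, where $x'$ collects the remaining $x$-coordinates. Differentiability of $b_j$ gives $|b_j(x;y) - \tilde b_j(x',y)| \leq C x_j$, so on $U_\delta$ one has $x_j^{b_j - \tilde b_j} = \exp(O(x_j|\log x_j|)) \in [1 - \varepsilon_0(\delta), 1+\varepsilon_0(\delta)]$ with $\varepsilon_0(\delta) \to 0$ as $\delta \to 0$. Replacing $d\mu_{\bb}$ by the product measure $d\tilde\mu := \prod_i x_i^{\tilde b_i - 1}\,dx\,dy$ thus costs only a bounded factor near $1$. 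Now integrate by parts in $x_j$ against the clean antiderivative
\[
F_j(x_j;x',y) = \int_0^{x_j} |\log s|^k s^{\tilde b_j(x',y) - 1}\,ds,
\]
which satisfies $F_j \leq C_k\, x_j|\log x_j|^k \cdot x_j^{\tilde b_j - 1}$; the boundary terms at $x_j = 0$ (where $F_j = 0$) and $x_j = \delta$ (where $\chi$ vanishes) drop out. Using the product rule together with $G(u) + uG'(u) \leq 2uG'(u)$,
\[
|\pa_{x_j}(\chi^2 uG(u))| \leq 2\chi|\pa_{x_j}\chi|\,uG(u) + 2\chi^2 uG'(u)\,|\pa_{x_j}u|.
\]

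Applying Cauchy--Schwarz with parameter $\sigma > 0$, using $uG(u) \leq u^2 G'(u)$ once more, the bound $F_j/x_j^{\tilde b_j - 1} \leq C_k \delta|\log\delta|^k$, and ellipticity $|\pa_{x_j}v|^2 \leq \lambda^{-1}\langle A\nabla v,\nabla v\rangle$, one obtains
\[
\int \chi^2 |\log x_j|^k uG(u)\,d\mu_{\bb} \leq C\varepsilon_0(\delta)\Bigl[\sigma\!\!\int\!\chi^2 G'(u)\langle A\nabla u,\nabla u\rangle\,d\mu_{\bb} + \sigma^{-1}\!\!\int\!(\langle A\nabla\chi,\nabla\chi\rangle + \chi^2) u^2 G'(u)\,d\mu_{\bb}\Bigr].
\]
Summing over the at most $n$ singular coordinates, choosing first $\sigma$ of size $\eta/(Cn\varepsilon_0(\delta))$ and then $\delta$ small enough, yields the desired inequality with $C_\eta$ independent of $u$, $G$, and $\chi$. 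The main technical obstacle is precisely the non-constancy of the weights $\bb$: without the freezing step, $\pa_{x_j}(x_j^{b_j(x;y)})$ acquires a correction $x_j\log x_j \cdot \pa_{x_j}b_j$ that contributes an extra logarithmic factor degrading the antiderivative estimate on $F_j$. The freezing confines the $x_j$-dependence of the weight into the harmless multiplicative factor $x_j^{b_j - \tilde b_j}$, which is uniformly near $1$ on $U_\delta$ and can be pulled outside the integral. The pointwise manipulations require $u$ differentiable; standard approximation of $u \in \Dom(Q)$ by $\cC^1$ functions justifies the passage to the limit, completing the proof.
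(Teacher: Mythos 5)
Your overall strategy---freeze the weight in the normal variable to obtain an explicit antiderivative, integrate by parts in each $x_j$ with $x_{0j}=0$, and close with Cauchy--Schwarz, the smallness coming from the leftover power of $x_j$---is essentially the argument the paper uses (there the logarithm is first dominated by $\eta'\sum_i x_i^{-a}$ and the weight is frozen by taking $\ub_i=\min_{x_i}b_i$, so the smallness is front-loaded into $\eta'$ rather than extracted from the antiderivative). As written, however, your proof has two gaps.

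First, when you integrate by parts in $x_j$ against $F_j$, the derivative does not only hit $\chi^2uG(u)$: the remaining frozen weights $x_i^{\tilde b_i-1}$, $i\ne j$, still depend on $x_j$ (only the $x_i$-dependence of each $b_i$ was frozen), so $\pa_{x_j}$ also produces terms $(\pa_{x_j}\tilde b_i)\,\log x_i\;x_i^{\tilde b_i-1}$, i.e.\ new logarithmically singular factors of exactly the type you are trying to control; your displayed inequality omits them. They are not fatal: each carries the small prefactor $F_j/x_j^{\tilde b_j-1}\lesssim \delta|\log\delta|^k$, and $|\log x_i|\le 1+|\log x_i|^k$, so after summing over the singular indices they can be absorbed into the left-hand side for $\delta$ small---but this absorption must actually be carried out, and it is precisely the step the paper handles with its condition $\tfrac{2}{\beta_0}\sum_i\sum_{k\ne i}|\pa_{x_i}\ub_k\log x_k|\le\tfrac12\sum_ix_i^{-a}$. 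Second, the ellipticity you invoke, $|\pa_{x_j}v|^2\le\lambda^{-1}\langle A\nabla v,\nabla v\rangle$, is false in the $(x;y)$ coordinates: \eqref{eqn74} holds in the square-root variables, and in $(x;y)$ coordinates the form only controls the degenerate quantity $x_j|\pa_{x_j}v|^2\lesssim\langle A\nabla v,\nabla v\rangle$ (the bound $\sum_ix_i(\pa_{x_i}f)^2\le M_1\langle A\nabla f,\nabla f\rangle$ used in the paper). The Cauchy--Schwarz step must therefore be rearranged: split the factor $x_j$ supplied by $F_j\le C_kx_j|\log x_j|^k\,x_j^{\tilde b_j-1}$ as $x_j^{1/2}\cdot x_j^{1/2}$, spend one half to form $x_j^{1/2}|\pa_{x_j}u|$, and keep the other half, so that the small parameter becomes $\sup_{s\le\delta}s^{1/2}|\log s|^k$ rather than $\delta|\log\delta|^k$. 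With these two repairs (plus the routine approximation of $u\in\Dom(Q)$ by bounded $\cC^1$ functions, which you note), your argument closes and coincides in substance with the paper's proof.
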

\begin{proof}[Proof of Lemma~\ref{lemB2.01022}] By relabeling, we can assume
  that
  \begin{equation}
    x_{01}=\cdots=x_{0l}=0\text{ and }0<x_{0i}\text{ for }i=l+1,\dots,n.
  \end{equation}
The key observation is that for any $\eta'>0,$ and $a>0,$ there is a $\delta_0>0$
so that if $|x_i-x_{0i}|<\delta_0,$ with $0<x_i,$ for $i=1,\dots,n,$ then
\begin{equation}\label{eqn237.010}
  M[\sum_{i=1}^n|\log x_i|^k+1]\leq \eta'\sum_{i=1}^nx_i^{-a}.
\end{equation}
For each $i=1,\dots,n,$ let $\ub_i(x;y)=\min\{b_i(x;y):x_i\in[0,x_{0i}+\delta_0]\};$
these are Lipschitz functions.  Fix a positive number
$0<a<\min\{\beta_0/2,1/4\},$ and let $0<\delta_0<1/2$ be fixed so
that~\eqref{eqn237.010} holds, and
\begin{equation}\label{eqn225.013}
  a+b_i(x;y)-\ub_i(x;y)<\frac{1}{2}\text{ for }(x;y)\in U_{\delta_0}(x_0;y_0).
\end{equation}
We may need to reduce $\delta$ several times, but we first assume that $\chi$
is supported in the set $U_{\delta_0}(x_0;y_0).$ Under this assumption
we see that
\begin{equation}\label{eqn238.010}
  \int\chi^2quG(u)d\mu_{\bb}\leq
  \eta'\int\chi^2\left[\sum_{i=1}^nx_i^{-a}\right]uG(u)d\mu_{\bb}
\leq \eta'\int\chi^2\left[\sum_{i=1}^nx_i^{-a}\right]uG(u)d\mu_{\ubb},
\end{equation}
where $d\mu_{\ubb}=x_1^{\ub_1}\cdots x_n^{\ub_n}dxdy.$

Recalling the form of the measure, and the fact that each $\ub_i$ is Lipschitz
and independent of $x_i$ for $0<x_i<\delta_0,$ we can integrate by parts to
obtain that
\begin{equation}
\begin{split}
  \int\chi^2&\left[\sum_{i=1}^nx_i^{-a}\right]uG(u)d\mu_{\ubb}=\sum_{i=1}^n\left[
\int \frac{x_i^{b_i-a}}{\ub_i-a}\pa_{x_i}\left(\chi^2 uG(u)\prod_{k\neq
    i}x_k^{\ub_k}\right)dxdy\right]\\
&=\sum_{i=1}^n\left[
\int \frac{x_i^{1-(a+b_i-\ub_i)}}{\ub_i-a}\left(\pa_{x_i}[\chi^2 uG(u)]+\chi^2 uG(u)\left(\sum_{k\neq
    i}\pa_{x_i}\ub_k\log x_k\right)\right)d\mu_{\bb}\right]
\end{split}
\end{equation}
Choosing a $0<\delta_1\leq \delta_0,$ we can arrange to have
\begin{equation}
\frac{2}{\beta_0}\sum_{i=1}^n\sum_{k\neq i}  |\pa_{x_i}\ub_k\log x_k|\leq
\frac{1}{2}\sum_{i=1}^nx_i^{-a}\text{ if }(x;y)\in U_{\delta_1}(x_0;y_0).
\end{equation}
Now assuming that $\supp\chi\subset U_{\delta_1}(x_0;y_0)$ these
inequalities and~\eqref{eqn225.013} imply that
\begin{equation}
  \int\chi^2\left[\sum_{i=1}^nx_i^{-a}\right]uG(u)d\mu_{\bb}\leq 
\frac{4}{\beta_0}
\int\left[\sum_{i=1}^n x_i^{\frac 12}\left|\pa_{x_i}[\chi^2 uG(u)]\right|\right]d\mu_{\bb}.
\end{equation}
Using the Cauchy-Schwarz inequality and the properties of $G$ we see that
\begin{multline}
  \sum_{i=1}^n x_i^{\frac 12}\left|\pa_{x_i}[\chi^2 uG(u)]\right|\leq\\
    [n\chi^2u^2G'(u)]^{\frac{1}{2}}\left[
\left(\sum_{i=1}^nx_i(\pa_{x_i}\chi)^2 u^2G'(u)\right)^{\frac{1}{2}}+
\left(\sum_{i=1}^nx_i(\pa_{x_i}u)^2 \chi^2G'(u)\right)^{\frac{1}{2}}\right].
\end{multline}
From  ellipticity hypotheses on $q(\nabla u),$ see~\eqref{eqn74},
it is clear that there is a constant $M_1$ so that for data, $f,$ supported in
a fixed small neighborhood of $\pa S_{n,m}$ we have the estimate
\begin{equation}
  \sum_{i=1}^nx_i(\pa_{x_i} f)^2\leq M_1\langle A\nabla f,\nabla f\rangle.
\end{equation}
 and therefore
 \begin{multline}
   \sum_{i=1}^n x_i^{\frac 12}\left|\pa_{x_i}[\chi^2 uG(u)]\right|\leq\\
[nM_1\chi^2u^2G'(u)]^{\frac{1}{2}}\left[(\langle A\nabla\chi,\nabla\chi\rangle
u^2G'(u))^{\frac 12}+(\langle A\nabla u,\nabla u\rangle
\chi^2G'(u))^{\frac 12}\right]
 \end{multline}
 Choosing $0<\eta'$ sufficiently small, the assertion of the lemma follows
 easily from this estimate, the arithmetic-geometric mean inequality
 and the initial estimate~\eqref{eqn238.010}.
\end{proof}

The proof of Lemma~\ref{lemB2.010} follows from the local result and a simple
covering argument. The set $B,$ appearing in the estimate~\eqref{eqn220.0122} is
compact. For each $(u;v)\in\pa S_{n,m}\cap B,$ the local result
provides an open set $U_{\delta}(u;v)$ in which the estimate~\eqref{eqn275.1.012}
holds. By compactness a finite collection $\{U_{\delta_i}(u_i,v_i):\:i=1\dots,
I\}$ covers $\pa S_{n,m}\cap B.$ Since $\min\{\delta_1,\dots,\delta_I\}>0,$ we
can choose a collection of smooth functions $\{\chi_i\}$ with
$\supp\chi_i\subset U_{\delta_i}(u_i,v_i),$ and
\begin{equation}
 \tchi^2(x;y)=  \sum_{i=1}^I\chi_i^2(x;y)=1
\end{equation}
in a neighborhood of $B\cap \pa S_{n,m}.$ We therefore obtain that
\begin{multline}
 \sum_{i=1}^I \int\limits_{S_{n,m}}q(x;y)\chi_i^2uG(u)d\mu_{\bb}<\\
 \sum_{i=1}^I\left[\eta\int\limits_{S_{n,m}}
\langle A\nabla u,\nabla u\rangle G'(u)\chi_i^2d\mu_{\bb} +C_{\eta}\int\limits_{S_{n,m}}\left[\langle
  A\nabla\chi_i,\nabla\chi_i\rangle+\chi_i^2 \right]u^2G'(u)d\mu_{\bb}\right].
\end{multline}
Note that $C_{\eta}$ depends only on $q.$ With these choices,  $q$ is
bounded in the $\supp(1-\tchi^2),$ which completes the proof of the lemma.

\end{proof}

The proof of this lemma is easily adapted to prove the following result:
\begin{lemma}\label{lemB2.2.010} Assume that $\bb=(b_1,\dots,b_l)$ are positive differentiable
  functions of $(x;y),$ with $0<\beta_0<b_j,$
  constant outside a compact set. Let $q$ be a measurable function defined on $S_{n,m}$ that
  satisfies~\eqref{eqn220.0122}, for some $k\in\bbN,$ $B$ and $M.$
Given $\eta>0$ there is a $C_{\eta}$ so that for any
$2\leq p,$ we have
\begin{equation}
  \int\limits_{S_{n,m}}|q(x;y)|u^p(x;y)d\mu_{\bb}\leq
 \eta\int\limits_{S_{n,m}}\langle A\nabla u^{\frac{p}{2}},\nabla
 u^{\frac{p}{2}}\rangle d\mu_{\bb}+
C_{\eta} \int\limits_{S_{n,m}} u^pd\mu_{\bb},
\end{equation}
for $u$ a bounded, non-negative, compactly supported function in $\Dom(Q).$
\end{lemma}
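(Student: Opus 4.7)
The plan is to obtain Lemma~\ref{lemB2.2.010} as a nearly immediate corollary of the preceding Lemma~\ref{lemB2.010} by a single substitution. Specifically, I will apply Lemma~\ref{lemB2.010} with $u$ replaced by $v := u^{p/2}$ and with the choice $G(t)=t$. For this $G$, we have $vG(v) = v^2 = u^p$, $v^2 G'(v) = v^2 = u^p$, and $\langle A\nabla v,\nabla v\rangle\,G'(v) = \langle A\nabla u^{p/2},\nabla u^{p/2}\rangle$, so the conclusion of Lemma~\ref{lemB2.010} reads, almost verbatim, as the inequality we want.

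Before invoking Lemma~\ref{lemB2.010}, I will check the mild structural conditions its proof imposes on $G$ and on the test function. The function $G(t)=t$ is $C^1$, nonnegative, nondecreasing, grows linearly, and satisfies $G(t) \leq tG'(t)$ with equality. The auxiliary functions appearing in the proof of Lemma~\ref{lemB2.010} are
\begin{equation}
H'(t) = \sqrt{G'(t)} = 1, \qquad H(t) = t, \qquad K(t) = \int_0^t G(\rho)\,d\rho = t^2/2,
\end{equation}
so $H^2(t) = 2K(t)$, which is the universal bound \eqref{eqn225.010} with $C_0 = 2$. This is the key place where the $p$-dependence is absorbed uniformly, since the constants associated to the $G$-machinery in Lemma~\ref{lemB2.010} are now absolute.

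Next, I must verify that $v = u^{p/2}$ satisfies the hypotheses on the test function in Lemma~\ref{lemB2.010}: bounded, nonnegative, compactly supported, and in $\Dom(Q)$. The first three are immediate since $u$ has these properties and $p \geq 2$. The membership $u^{p/2} \in \Dom(Q)$ follows from the standard chain rule for Dirichlet forms: the composition $t \mapsto t^{p/2}$ is $C^1$ on the bounded interval $[0, \|u\|_\infty]$ whenever $p \geq 2$ (this is where $p \geq 2$ enters), and composition with such a Lipschitz function preserves the form domain, with $\nabla u^{p/2} = (p/2) u^{(p-2)/2} \nabla u$ in the sense of the gradient operator intrinsic to $Q$.

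The main (minor) obstacle is simply this chain-rule verification at the boundary $\{u = 0\}$, which is handled by approximating $t^{p/2}$ by smooth truncations $(t + \varepsilon)^{p/2} - \varepsilon^{p/2}$ and passing to the limit as $\varepsilon \to 0^+$; boundedness of $u$ and dominated convergence suffice. Once the substitution is justified, the conclusion of Lemma~\ref{lemB2.010} applied to $(v, G) = (u^{p/2}, \mathrm{id})$ is precisely the estimate claimed in Lemma~\ref{lemB2.2.010}, with the same $\eta$ and a $C_\eta$ inherited from Lemma~\ref{lemB2.010} (and which may depend on $p$ through the chain-rule identity, consistent with how this lemma is invoked in the proof of Lemma~\ref{lemB1.010}).
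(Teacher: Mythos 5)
Your argument is correct and amounts to a clean formalization of the paper's terse remark that Lemma~\ref{lemB2.010} is ``easily adapted'': you observe that with $G=\mathrm{id}$ (which satisfies all the structural conditions on $G$, with $H(t)=t$, $K(t)=t^2/2$, so $H^2=2K$) and with $u$ replaced by $v=u^{p/2}$, the conclusion of Lemma~\ref{lemB2.010} literally \emph{is} the claimed inequality, since $vG(v)=v^2=u^p$ and $G'\equiv 1$. The domain check for $u^{p/2}$ via the chain rule is exactly the point where $p\ge 2$ is used, and since the constant $C_\eta$ in Lemma~\ref{lemB2.010} is stated to be independent of $u$ and $G$, no $p$-dependence is introduced (your closing parenthetical is overcautious on that score), which matches the claim in the statement.
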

\begin{remark}  It is evident that we
  can actually prove these results for somewhat more singular potentials,
  i.e. those satisfying an estimate of the form
  \begin{equation}
    |q(x;y)|\leq M\left[\sum_{i=1}^nx_i^{-a}+1\right],
  \end{equation}
for a constant $a<\beta_0.$
\end{remark}

We have an estimate for supersolutions, which is the analogue of
Saloff-Coste's Theorem 5.2.16 (or (2.11.c) in~\cite{Sturm2}):
\begin{lemma}\label{lemc2.01}
  Assume that $\bb=(b_1,\dots,b_n)$ are positive differentiable functions of $(x;y),$
  satisfying~\eqref{regb}, which are constant outside a compact set, $X(x;y)$
  is a continuous $\bbR^{n+m}$-valued function, satisfying~\eqref{eqn220.0122}, which is constant outside of a
  compact set, and $c(x;y)$ is a bounded measurable function supported in a
  compact set.  There is a constant $C(p,D)$ that depends on $0<p,$ the
  doubling dimension, $D,$ and Sobolev constant so that with $0<\delta<1,$ and
  $0<r,$ and $u$ a bounded, positive, weak supersolution of
\begin{equation}\label{eqn222.006}
  \pa_tu=(L_Q-V_X-c)u
\end{equation}
in $W_r(s,q),$ satisfies the estimate:
\begin{equation}\label{eqn245.010}
  \sup_{W(\delta)}u^{-p}\leq \frac{C(p,D)}{(1-\delta)^{D+2}r^2\mu_{\bb}(B^i_r(q))}\iint\limits_{W(1)}u^{-p}dt d\mu_{\bb}, \text{ for
a  }p>0.
\end{equation}
\end{lemma}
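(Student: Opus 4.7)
The plan is to mimic the argument for Lemma~\ref{lemB1.010} (the subsolution case) after applying a power transformation that converts a positive supersolution of $\partial_t u=(L_Q-V_X-c)u$ into a subsolution of a closely related equation. First, I would replace $u$ by $u+\varepsilon$ (where $\varepsilon>0$ is small, and $\varepsilon\to 0$ at the end) so that $u$ is bounded away from zero and negative powers are admissible; the cost is an additional $c\varepsilon$ term in the inequality, which is handled by the same absorption argument used for $c$ itself. Given $\alpha>0$ I would test the weak supersolution inequality against $\varphi=\psi^2 u^{-1-\alpha}$ with $\psi\in\CI_c(W_r(s,q))$ non-negative. A direct computation, writing $v=u^{-\alpha/2}$, converts the resulting inequality into
\begin{equation*}
\int\bigl[\tfrac{2}{\alpha}\psi^2 v\,v_t+\tfrac{4(1+\alpha)}{\alpha^2}\psi^2\langle A\nabla v,\nabla v\rangle\bigr]d\mu_{\bb}
\leq \int\bigl[-\tfrac{4}{\alpha}v\psi\langle A\nabla v,\nabla\psi\rangle-\tfrac{2}{\alpha}v\psi^2\langle A\nabla v,X\rangle+cv^2\psi^2\bigr]d\mu_{\bb},
\end{equation*}
which exhibits $v$ as a weak subsolution of a parabolic equation of the same structural form, with vector field $-X$ and potential proportional to $c$ (both still satisfying the log-type bound~\eqref{eqn220.012}).

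Next I would run the Moser iteration exactly as in the proof of Lemma~\ref{lemB1.010}. The Cauchy--Schwarz and arithmetic--geometric mean inequalities applied to the cross terms $v\psi\langle A\nabla v,\nabla\psi\rangle$ and $v\psi^2\langle A\nabla v,X\rangle$ split each into a fraction of $\psi^2\langle A\nabla v,\nabla v\rangle$ (to be absorbed into the positive gradient term on the left) plus terms of the form $\langle A\nabla\psi,\nabla\psi\rangle v^2$ and $(\langle AX,X\rangle+|c|)\psi^2 v^2$. The potentially singular $(\langle AX,X\rangle+|c|)\psi^2 v^2$ term is controlled by Lemma~\ref{lemB2.2.010}, which lets us absorb a small multiple of $\int\psi^2\langle A\nabla v,\nabla v\rangle d\mu_{\bb}$ at the cost of a large multiple of $\int(\langle A\nabla\psi,\nabla\psi\rangle+\psi^2)v^2\,d\mu_{\bb}$. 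The outcome is the standard energy inequality
\begin{equation*}
\tfrac{1}{4}\!\int\partial_t(\psi^2 v^2)d\mu_{\bb}+c_0\!\int\psi^2\langle A\nabla v,\nabla v\rangle d\mu_{\bb}\leq C\!\int\bigl[\langle A\nabla\psi,\nabla\psi\rangle+\psi^2+|\psi\psi_t|\bigr]v^2\,d\mu_{\bb},
\end{equation*}
with constants independent of $\alpha$ staying in a bounded range. Combining this with the Sobolev inequality~\eqref{SobIneq} and a standard iteration of the resulting reverse H\"older inequality on nested cylinders $W(\delta_k)$, with $\delta_k$ interpolating between $\delta$ and $1$, yields
\begin{equation*}
\sup_{W(\delta)} v^2\leq \frac{C}{(1-\delta)^{D+2}r^2\mu_{\bb}(B_r^i(q))}\iint_{W(1)}v^2\,d\mu_{\bb}dt,
\end{equation*}
i.e.~the desired estimate with exponent $p=\alpha$.

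Finally, to obtain the full range $p>0$, I would run the above argument with a fixed choice of $\alpha$ large enough that the absorption works (any $\alpha>0$ is in fact acceptable), and then use the $L^q$-to-$L^{q'}$ comparison trick from the end of the proof of Lemma~\ref{lemB1.010} (following Saloff-Coste Theorem 2.2.3) to deduce the $L^p$ version for arbitrarily small $p$. Letting $\varepsilon\to 0^+$ recovers the statement for $u$ itself. The main obstacle, as in Lemma~\ref{lemB1.010}, is the presence of potentially log-singular coefficients in $X$ and $c$, which forbid the direct Moser scheme; the decisive ingredient that makes the scheme go through is Lemma~\ref{lemB2.2.010}, which provides the necessary absorption of such terms into the Dirichlet energy. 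All other steps are standard once this absorption is available and the doubling and Sobolev inequalities are in hand.
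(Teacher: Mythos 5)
Your proposal is correct and follows essentially the same route as the paper: test the supersolution inequality with $\varphi=\psi^2 u^{-1-\alpha}$, set $v=u^{-\alpha/2}$, absorb the log-singular contributions from $\langle AX,X\rangle$ and $|c|$ via Lemma~\ref{lemB2.2.010} after Cauchy--Schwarz and AM--GM, and then run Moser iteration with the Sobolev inequality on nested cylinders. The only cosmetic difference is in handling the $\varepsilon$-shift when $c$ takes negative values: the paper first replaces $u$ by $e^{\mu t}u$ with $\mu>\|c\|_{L^\infty}$ so that $u+\varepsilon$ is a genuine supersolution, whereas you absorb the residual $\varepsilon c$ term directly (which does work, since $\varepsilon|c|(u+\varepsilon)^{-1}\le |c|$), and both lead to the same energy inequality and constants independent of $\varepsilon$.
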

\begin{remark} For this result we need to assume that $c$ is bounded, for otherwise we
  could not begin the argument below by assuming that the supersolution is strictly positive.
\end{remark}
\begin{proof}
  Provided that $c$ is non-negative, we can assume, by replacing $u$ by
  $u+\epsilon,$ that $u$ is strictly positive. If $c$ assumes negative values,
  then we first replace $u$ by $e^{\mu t}u,$ where $\mu>\|c\|_{L^{\infty}},$
  which reduces us to the previous case.  For any non-negative function
  $\varphi$ with compact support in $B^i_{r}(q),$ a weak supersolution
  satisfies
\begin{equation}\label{eqn208.005}
  \int\left[u_t\varphi+\langle A\nabla u,\nabla\varphi+\varphi
    X\rangle+cu\varphi\right]d\mu_{\bb}\geq 0.
\end{equation}
If we let $\varphi=p\psi^2u^{-p-1},$ and set $v=u^{-\frac p2},$ then this is
equivalent to
\begin{multline}
  -\int\bigg[\psi^2\pa_t v^2+\frac{4(p+1)}{p}\psi^2\langle A\nabla v,\nabla
    v\rangle+4\langle A\nabla v,\psi\nabla
    \psi\rangle v+\\ 2\psi^2\langle A\nabla v, X
    v\rangle-pc\psi^2v^2\bigg]d\mu_{\bb}\geq 0.
\end{multline}
Once again, using Lemma~\ref{lemB2.2.010} and  the Cauchy Schwarz and arithmetic-geometric
mean inequalities, we show that there is a constant $M$ for which
\begin{multline}
  \int\left[\psi^2\pa_t v^2+\left(2+\frac{4}{p}\right)\psi^2\langle A\nabla v,\nabla
    v\rangle\right]d\mu_{\bb}\leq\\
M\int\left[(p+1)\psi^2+\langle A\nabla\psi,\nabla
    \psi\rangle\right]v^2d\mu_{\bb}.
\end{multline}
Arguing as above, we see that there is another constant $M'$ so that
\begin{multline}
  \int\left[\psi^2\pa_t v^2+\langle A\nabla \psi v,\nabla
    \psi v\rangle\right]d\mu_{\bb}\leq\\
M'(p+1)\|\langle A\nabla\psi,\nabla \psi\rangle\|_{L^{\infty}}
\int\limits_{\supp\psi}v^2d\mu_{\bb}.
\end{multline}
The statement of the lemma now follows from the iteration argument using the
Sobolev inequality given on page 129 of~\cite{SaloffCosteLMS}.
\end{proof}

To complete the argument we need to show that the appropriate analogue of
Saloff-Coste's Lemma 5.4.1 (which is Moser's Lemma 2) holds. In the present case this reads:
\begin{lemma}\label{lemc3.01}
  Assume that $\bb=(b_1,\dots,b_n)$ are positive differentiable functions of $(x;y),$
  satisfying~\eqref{regb}, which are constant outside a compact set, $X(x;y)$
  is a continuous $\bbR^{n+m}$-valued function, satisfying~\eqref{eqn220.0122},
  which is constant outside of a compact set, and $c(x;y)$ is a bounded
  measurable function supported in a compact set.  For any weak positive
  supersolution u of~\eqref{eqn222.004} in $W_r(s,q),$
  $0<r<R,$ $0<\eta<1,$ and $0<\delta<1,$ there is a constant $a(\eta,u)$ so
  that for all $0<\lambda$ we have the estimates
\begin{equation}
\begin{split}
  &\mu_{\bb}\times dt\{(x;y,t)\in W_+:\:\log u<-\lambda-a\} \leq C\frac{r^2\mu_{\bb}(B^i_r)}{\lambda}\\
&\mu_{\bb}\times dt\{(x;y,t)\in W_-:\:\log u>\lambda-a\}\leq
C\frac{r^2\mu_{\bb}(B^i_r)}{\lambda},
\end{split}
\end{equation}
where $W_+=(s-\eta r^2,s)\times B^i_{\delta r}$ and $W_-=(s-r^2, s-\eta
r^2)\times B^i_{\delta r}.$  Here $C$ is independent of $\lambda>0, s,$ and $r.$
\end{lemma}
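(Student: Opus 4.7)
The plan is to follow Moser's logarithmic-lemma strategy, implemented as in~\cite[Section~5.4]{SaloffCosteLMS}, with two modifications: the Euclidean Poincar\'e inequality is replaced by the weighted one of Proposition~\ref{prop3.4}, and the logarithmically singular coefficients of $X$ are absorbed using Lemma~\ref{lemB2.2.010}. First I would reduce to a strictly positive supersolution: since $c$ is bounded, passing to $e^{\mu t}u$ with $\mu>\|c\|_{L^{\infty}}$ forces the zeroth order term to be non-negative, after which replacing $u$ by $u+\epsilon$ (and letting $\epsilon\to 0^+$ at the end) makes $\log u$ legitimate. Let $\psi\in\cC^1_c(B^i_r(q))$ be a Lipschitz cutoff for $\rho_i$, equal to $1$ on $B^i_{\delta r}(q)$ and satisfying $\langle A\nabla\psi,\nabla\psi\rangle \leq Cr^{-2}$. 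Testing the supersolution inequality~\eqref{eqn208.005} against $\varphi=\psi^2/u$ and setting $w=-\log u$ produces, after a short computation,
\begin{equation*}
\int\psi^2\,\pa_t w\,d\mu_{\bb}+\int\psi^2\langle A\nabla w,\nabla w\rangle\,d\mu_{\bb} \leq \int\bigl[-2\psi\langle A\nabla w,\nabla\psi\rangle-\psi^2\langle A\nabla w,X\rangle+c\psi^2\bigr]d\mu_{\bb}.
\end{equation*}

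The $\nabla\psi$ contribution is absorbed by the standard AM-GM bound. The delicate term is the cross term with $X$: I would bound
\begin{equation*}
\bigl|\psi^2\langle A\nabla w,X\rangle\bigr|\leq \tfrac14\psi^2\langle A\nabla w,\nabla w\rangle+\psi^2|X|_A^2,
\end{equation*}
and then invoke Lemma~\ref{lemB2.2.010} with potential $q=|X|_A^2$, exponent $p=2$, and the function ``$u$'' of that lemma taken to be $\psi$, yielding
\begin{equation*}
\int\psi^2|X|_A^2\,d\mu_{\bb}\leq \eta\int\langle A\nabla\psi,\nabla\psi\rangle\,d\mu_{\bb}+C_\eta\int\psi^2\,d\mu_{\bb}.
\end{equation*}
Choosing $\eta$ small, using the $L^{\infty}$ bound on $c$, and recalling $\langle A\nabla\psi,\nabla\psi\rangle\leq Cr^{-2}$, the inequality collapses to
\begin{equation*}
\int\psi^2\,\pa_t w\,d\mu_{\bb}+\tfrac12\int\psi^2\langle A\nabla w,\nabla w\rangle\,d\mu_{\bb}\leq \frac{C}{r^2}\int\psi^2\,d\mu_{\bb}.
\end{equation*}

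Normalizing, set
\begin{equation*}
W(t)=\frac{\int\psi^2 w(\cdot,t)\,d\mu_{\bb}}{\int\psi^2\,d\mu_{\bb}},\qquad \alpha(t)=\frac{\int\psi^2\langle A\nabla w,\nabla w\rangle\,d\mu_{\bb}}{2\int\psi^2\,d\mu_{\bb}},
\end{equation*}
so that $W'(t)+\alpha(t)\leq C/r^2$. Proposition~\ref{prop3.4} supplies the companion bound
\begin{equation*}
\int\psi^2\bigl(w(\cdot,t)-W(t)\bigr)^2\,d\mu_{\bb}\leq C'r^2\alpha(t)\int\psi^2\,d\mu_{\bb}.
\end{equation*}
Since $V(t):=W(t)-Ct/r^2$ is non-increasing, I would take $a$ to be the value of $V$ at an intermediate time $s-\eta r^2$ adjusted by a constant depending only on $\eta$, so that $|W(t)-a|\leq \Theta(\eta)$ on both $W_+$ and $W_-$. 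Writing $w(x,t)-a=(w(x,t)-W(t))+(W(t)-a)$, integrating the Poincar\'e bound in $t$, and applying Chebyshev's inequality produces the two claimed weak-type inequalities, with constants depending only on the doubling dimension $D$, the Poincar\'e constant, the ellipticity constants, and the constants appearing in~\eqref{eqn220.012}.

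The principal obstacle, as signaled above, is the singular cross term: because $|X|_A$ is only dominated by a power of $\sum_i|\log x_i|$, plain Cauchy--Schwarz cannot close the estimate. It is precisely Lemma~\ref{lemB2.2.010}, applied to the cutoff $\psi$ rather than to $w$, that allows $\int\psi^2|X|_A^2\,d\mu_{\bb}$ to be absorbed into a small multiple of the Dirichlet energy of $\psi$ plus an $L^2$-mass of $\psi$, which are exactly the quantities that feature in the standard Moser scheme. Once this absorption is secured, the remainder of the argument proceeds essentially verbatim as in~\cite[Lemma~5.4.1]{SaloffCosteLMS}.
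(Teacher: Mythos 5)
Your proposal follows the same route as the paper's proof: reduce to a strictly positive supersolution via $e^{\mu t}u$ and $u+\epsilon$ (possible because $c$ is bounded), test~\eqref{eqn208.005} with $\varphi=\psi^2/u$, pass to $w=-\log u$, absorb the gradient cross terms by Cauchy--Schwarz, control the singular term $\psi^2|X|^2_A$ by applying Lemma~\ref{lemB2.2.010} with $p=2$ to the cutoff itself, and then feed the resulting differential inequality together with the weighted Poincar\'e inequality of Proposition~\ref{prop3.4} into the machinery of~\cite[Lemma~5.4.1]{SaloffCosteLMS}. That is exactly what the paper does, and your identification of the absorption step as the only genuinely new ingredient is correct.

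Two caveats about your closing paragraph, however. First, Proposition~\ref{prop3.4} is stated for the specific weight $\phi=\max\{1-\rho_i(\cdot,(x;y))/r,0\}$, not for a cutoff that is identically $1$ on $B^i_{\delta r}$; the clean fix (and the paper's choice) is to take $\psi=\phi$ itself, using only that $\phi\geq 1-\delta$ on $B^i_{\delta r}$, which is all the level-set estimate requires. Second, the endgame as you sketch it would not close: you cannot arrange $|W(t)-a|\leq\Theta(\eta)$ on both $W_+$ and $W_-$, since the differential inequality only gives $W'\leq C/r^2$, hence the one-sided bounds $W(t)\leq a+C\eta$ on the later window and $W(t)\geq a-C$ on the earlier one (which, given the signs of the two level sets $\{w>\lambda+a\}$ and $\{w<a-\lambda\}$, is in fact all that is needed); more seriously, ``integrating the Poincar\'e bound in $t$ and applying Chebyshev'' fails because $\int\alpha(t)\,dt\leq C+W(s-r^2)-W(s)$ is not bounded independently of $u$. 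The $1/\lambda$ decay comes from Moser's argument on the distribution function of $w-a-C(t-t_0)/r^2$, i.e., precisely the pages of~\cite{SaloffCosteLMS} that you (and the paper) invoke; deferring to that argument is fine, but the shortcut you describe is not a correct summary of it.
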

\begin{proof} As with the proof of the previous lemma, we can assume that $u$
  is strictly positive and  show that the
  fundamental inequality used in Saloff-Coste's proof holds in this case as
  well. The proof in~\cite{SaloffCosteLMS} is contained on pages 143-145. We
  start, as before, with~\eqref{eqn208.005} and use the test function
  $\varphi=\psi^2(x;y)u^{-1}.$ Letting $v=-\log u,$ this equation takes the
  form
  \begin{equation}
    \int \left[v_t\psi^2+\langle A\nabla v,\nabla v\rangle+
\langle A\nabla v,\nabla \psi^2\rangle+\langle A\nabla v,\psi^2 X\rangle
-c\psi^2\right]d\mu_{\bb}\leq 0.
  \end{equation}
Using the Cauchy-Schwarz and  arithmetic-geometric mean inequalities we obtain
that
\begin{multline}\label{eqn243.009}
  \pa_t\int\psi^2vd\mu_{\bb}+\frac{1}{2}\int \langle A\nabla v,\nabla v\rangle d\mu_{\bb}
    \\
\leq
8\int\left[ \langle A\nabla \psi,\nabla \psi\rangle + 
\psi^2(\langle A X,X\rangle+|c|)\right] d\mu_{\bb}.
\end{multline}
We once again use Lemma~\ref{lemB2.2.010} to control the $\langle A
X,X\rangle$-term and show that there is a constant $C'$ independent of $\psi,
v, W, \delta,$ so that
\begin{multline}\label{eqn243.010}
  \pa_t\int\psi^2vd\mu_{\bb}+\frac{1}{2}\int \langle A\nabla v,\nabla v\rangle d\mu_{\bb}
    \\
\leq
C'\int\left[ \langle A\nabla \psi,\nabla \psi\rangle + 
\psi^2\right] d\mu_{\bb}.
\end{multline}

For $\psi$ we use the function $\psi(\tx;\ty)=(1-\rho_i((\tx;\ty),(x;y))/r)_+.$
As $r<R,$ it is clear that there is a constant $C$ so that the right hand side
in~\eqref{eqn243.009} is bounded by
\begin{equation}
  C\|\langle A\nabla \psi,\nabla \psi\rangle\|_{L^{\infty}}\mu_{\bb}(\supp\psi).
\end{equation}
These estimates therefore imply that
\begin{equation}
   \pa_t\int\psi^2vd\mu_{\bb}+\frac{1}{2}\int \langle A\nabla v,\nabla v\rangle d\mu_{\bb}
    \leq C\|\langle A\nabla \psi,\nabla \psi\rangle\|_{L^{\infty}}\mu_{\bb}(\supp\psi),
\end{equation}
which is equivalent to the estimate (5.4.1) in~\cite{SaloffCosteLMS}. Replacing
  Theorem 5.3.4 (The Weighted Poincar\'e Inequality) with our Proposition~\ref{prop3.4}
(Sturm's Corollary 2.5), we complete the proof of this lemma exactly as
in~\cite{SaloffCosteLMS}. The only other ingredient used in the proof is the
doubling property of the measure.
\end{proof}

\begin{small}


\begin{thebibliography}{10}


\bibitem{Berline-Getzler-Vergne1}
{\sc N.~Berline, E.~Getzler, and M.~Vergne}, {\em Heat Kernels and {D}irac
  Operators}, vol.~298 of Grundlehren der mathematischen {W}issenschaften,
  Springer-Verlag, Berlin Heidelberg New York, 1992.

\bibitem{ChenStroock}
{\sc L.~Chen and D.~Stroock}, {\em The fundamental solution to the
  {W}right-{F}isher equation}, SIAM J. Math. Anal., 42 (2010), pp.~539--567.

\bibitem{WF1d}
{\sc C.~L. Epstein and R.~Mazzeo}, {\em {W}right-{F}isher diffusion in one
  dimension}, SIAM J. Math. Anal., 42 (2010), pp.~568--608.

\bibitem{EpMaz2}
\leavevmode\vrule height 2pt depth -1.6pt width 23pt, {\em Degenerate Diffusion
  Operators Arising in Population Biology}, vol.~185 of Annals of Mathematics
  Studies, Princeton University Press, Princeton, NJ, 2013.

\bibitem{EpMaz3}
\leavevmode\vrule height 2pt depth -1.6pt width 23pt, {\em The geometric
  microlocal analysis of generalized {K}imura and {H}eston diffusions}, in
  Analysis and Topology in Nonlinear Differential Equations, D.~G.
  de~Figueiredo, J.~M. do~\'O, and C.~Tomei, eds., vol.~85 of Progress in
  Nonlinear Differential Equations and Their Applications, Springer
  International Publishing AG, New York, NY, 2014, pp.~241--266.

\bibitem{EpPop2}
{\sc C.~L. Epstein and C.~Pop}, {\em Harnack inequalities for degenerate
  diffusions},  (2014), p.~57pp.
\newblock arXiv:1406.4759 [math.PR].

\bibitem{FellerV2}
{\sc W.~Feller}, {\em An Introduction to Probability Theory and Its
  Applications. {V}ol. {II}}, John Wiley \& Sons, Inc., New York-London-Sydney,
  1966.

\bibitem{Jerison1}
{\sc D.~Jerison}, {\em The {P}oincar\'e inequality for vector fields satisfying
  {H}\"ormander's condition}, Duke Math. J., 53 (1986), pp.~503--523.

\bibitem{Kimura1964}
{\sc M.~Kimura}, {\em Diffusion models in population genetics}, Journal of
  Applied Probability, 1 (1964), pp.~177--232.

\bibitem{Moser1}
{\sc J.~Moser}, {\em A {H}arnack inequality for parabolic differential
  equations}, Comm. Pure Appl. Math., 17 (1964), pp.~101--134.

\bibitem{Moser2}
{\sc J.~Moser}, {\em On a pointwise estimate for parabolic differential
  equations}, Comm. Pure Appl. Math., 24 (1971), pp.~727--740.

\bibitem{pop2}
{\sc C.~Pop}, {\em {$C^0$}-estimates of solutions to the parabolic equation
  associated to {K}imura diffusions}, preprint,  (2014), p.~28pp.
\newblock arXiv:1406.0742 [math.PR].

\bibitem{Pop1}
\leavevmode\vrule height 2pt depth -1.6pt width 23pt, {\em Existence,
  uniqueness and the strong {M}arkov property of solutions to {K}imura
  stochastic differential equations with singular drift},  (2014), p.~25pp.
\newblock arXiv:1406.0745 [math.PR].

\bibitem{SaloffCosteLMS}
{\sc L.~Saloff-Coste}, {\em Aspects of {S}obolev-type inequalities}, vol.~289
  of London Mathematical Society Lecture Note Series, Cambridge University
  Press, Cambridge, 2002.

\bibitem{shimakura1}
{\sc N.~Shimakura}, {\em \'{E}quations diff\'erentielles provenant de la
  g\'en\'etique des populations}, T\^ohoku Math. J., 29 (1977), pp.~287--318.

\bibitem{shimakura2}
\leavevmode\vrule height 2pt depth -1.6pt width 23pt, {\em Formulas for
  diffusion approximations of some gene frequency models}, J. Math. Kyoto
  Univ., 21 (1981), pp.~19--45.

\bibitem{StrookProbPDE}
{\sc D.~W. Stroock}, {\em Partial Differential Equations for Probabilists},
  vol.~112 of Cambridge Studies in Advanced Mathematics, Cambridge University
  Press, Cambridge, 2008.

\bibitem{Sturm1}
{\sc K.-T. Sturm}, {\em Analysis on local {D}irichlet spaces. {I}.
  {R}ecurrence, conservativeness and {$L^p$}-{L}iouville properties}, J. Reine
  Angew. Math., 456 (1994), pp.~173--196.

\bibitem{Sturm2}
\leavevmode\vrule height 2pt depth -1.6pt width 23pt, {\em Analysis on local
  {D}irichlet spaces. {II}. {U}pper {G}aussian estimates for the fundamental
  solutions of parabolic equations}, Osaka J. Math., 32 (1995), pp.~275--312.

\bibitem{Sturm3}
\leavevmode\vrule height 2pt depth -1.6pt width 23pt, {\em Analysis on local
  {D}irichlet spaces. {III}. {T}he parabolic {H}arnack inequality}, J. Math.
  Pures Appl. (9), 75 (1996), pp.~273--297.

\end{thebibliography}
\end{small}
\end{document}